\def\dOi{10(1:3)2014}
\keywords{lambda calculus; range property; theory $\mathcal{H}$; persistence property.}
\theoremstyle{definition}
\newtheorem{coxa}[thm]{Comment and Example}
\newtheorem{noco}[thm]{Notations and Example}
\def\b{\beta}
\def\g{\gamma}
\def\r{\rho}
\def\l{\lambda}
\def\n{\nu}
\def\r{\rho}
\def\n{\nu}
\def\s{\sigma}
\def\O{\Omega}
\def\tr{\triangleright}
\def\trbo{\tr_{\b \Omega}}
\def\<{\langle}
\def\>{\rangle}
\def\N{\mathbb{N}}
\begin{document}

\title[About the range property for $\mathcal{H}$]{About the range property for $\mathcal{H}$}

\author[R.~David]{Ren\'e David}	
\address{LAMA - Equipe LIMD - Universit\'e de Savoie - 73376 Le Bourget du Lac}	
\email{david@univ-savoie.fr -- nour@univ-savoie.fr}  

\author[K.~Nour]{Karim Nour}	



\begin{abstract}
Recently, A. Polonsky (see \cite{Pol12}) has shown that the range
property fails for  $\mathcal{H}$. We give here some conditions on
a closed term that imply that its range has an infinite
cardinality.
\end{abstract}

\maketitle

\section*{Introduction}\label{S:one}

\subsection{Our motivations}\hspace*{\fill} \\

\noindent Let $\mathcal{T}$ be a $\l$-theory. The {\em range property} for
$\mathcal{T}$  states that if $\l x.F$ is a closed $\l$-term, then its
range (considering $\l x.F$ as a map from  $\mathcal{M}$ to
$\mathcal{M}$ where $\mathcal{M}$ is the algebra of closed
$\l$-terms modulo the equality defined by $\mathcal{T}$) has
cardinality either 1 or $Card(\mathcal{M})$.
It has been proved by Barendregt in \cite{Bar93} that  the range
property holds for all recursively enumerable theories. For the
 theory $\mathcal{H}$ equating all unsolvable terms, the
validity of the range property has been an open problem for a long
time. Very recently A. Polonsky has shown (see \cite{Pol12}) that
it fails.

In an old attempt to prove the  range property for $\mathcal{H}$,
Barendregt (see \cite{Bar08}) suggested a possible way to get the
result. The idea was, roughly, as follows. First observe that, if
the range of a term  is not a singleton, it will reduce to a term
of the form $\l x. F[x]$.  Assuming that, for some $A$, $F[x:=A]
\neq_{\mathcal{H}} F[x:=\O]$, he proposed another term $A'$ (the
term  $J_{\n}\circ A$ of Conjecture 3.2 in \cite{Bar08}) having a
free variable $\n$ that could never be erased or used  in a
reduction of $F[x:=A']$. He claimed that, by the properties of the
variable $\n$, the terms $F[x:=A_n]$ should be different where
$A_n=A'[\nu:= c_n]$ and $c_n$ is, for example, the Church numeral
for $n$.  It has been rather quickly understood by various
researchers that the term proposed in \cite{Bar08} actually {\em
had not} the desired property and, of course, Polonsky's result
shows  this method could not work. Moreover, even if the term $A'$
had the desired property it is, actually, not true that the terms
$F[x:=A_n]$ would be different: see section \ref{BP} below.

\subsection{Our results}\hspace*{\fill} \\

\noindent Even though the failure of the range property for $\mathcal{H}$ is
now  known, we believe that having conditions which imply that
the range of a term is infinite is, ``by itself'' interesting. We
also think   that the idea proposed by Barendregt remains
interesting and, in this paper,  we consider the following
problem.

Say that $F$ has the {\em Barendregt's persistence property} if for each $A$
such that $F[x:=A] \neq_{\mathcal{H}} F[x:=\O]$ we can find a term
 $A'$ that  has a free variable $\n$ that  could
never be erased or applied in a reduction of $F[x:=A']$ (see
definition \ref{applied}).

We give here some conditions on terms that
imply the Barendregt's persistence property. Our main result is Theorem
\ref{cases}. It can be stated as follows. Let $F$ be a term having
a unique free variable $x$ and $A$ be a closed term such that
$F[x:=A] \neq_{\mathcal{H}} F[x:=\O]$. We introduce a sequence
$(F_k)_{k \in \N}$ of reducts of $F$ that can simulate (see Lemma \ref{Fk})
all the reductions of $F$. By considering, for each $k$, the
different occurrences of $x$ in $F_k$ we introduce a tree ${\mathcal
T}$ and a special branch in it (see Theorem \ref{tree}). Denoting
by $x_{(k)}$ the corresponding occurrence of $x$ in $F_k$, Theorem
\ref{cases} states that:
\begin{enumerate}
\item  If, for $k \in \mathbb{N}$, the number of arguments of
$x_{(k)}$ in $F_k$ is bounded,  $F$ has the Barendregt's persistence property.

\item Otherwise and assuming the branch in ${\mathcal T}$ is recursive
there are two cases.

\begin{enumerate}[label=(2.\alph*)]
\item If some of the arguments of $x_{(k)}$ in $F_k$
come in head position infinitely often during the head reduction
of $F[x:=A]$, then $F$ has the Barendregt's persistence property.

\item Otherwise, it is possible that $F$ does not
have the Barendregt's persistence property.
\end{enumerate}
\end{enumerate}

\noindent In case (1) we give  two arguments.  The first one is quite easy
and uses this very particular situation. It gives a term $A'$
where $\n$ cannot be erased but we have not shown that it is never
applied (it is not applied only in the branch defined in section
2). The second one (which is more complicated) is a complete proof
that $F$ has the Barendregt's persistence property. It can also be seen as  an
introduction to the more elaborate case 2.(a). For case 2.(b) we
give examples of terms $F$ for which there is no term having this
property.

\subsection{A final remark}\hspace*{\fill} \\\label{BP}

\noindent Note that, actually, even if a term $A'$ {\em having}
the Barendregt's persistence property can be found this would not give an infinite
range for $\l x.F$. This is due to the fact that the property of
$A'$  does not imply that $F[x=A_n] \neq_{\mathcal{H}} F[x=A_m]$ for $n
\neq m$ where $A_k=A'[\n=c_k]$ and $c_k$ is
the Church integer for $k$. This is an old result of Plotkin (see \cite{Plo74}). Thus,
 having an infinite range will need another assumption.
We will also consider this other assumption and thus give simple
criteria that imply that the range of $\l x.F$ is infinite.

The paper is organized as follows. Section 1 gives the necessary
 definitions. Section 2 considers the possible situations
for $F$ and states our main result.   Section 3 and 4 give the
proof in the cases where we actually can find an $A'$.  Section 5
gives some complements.\newpage

\section{Preliminaries}

\begin{nota}\hfill
\begin{enumerate}
\item We denote by $\Lambda^{\circ}$ the set of closed $\l$-terms.
\item We denote by  $c_k$ the Church integer for $k$ and
by $Suc$ a closed term for the successor function.
 As usual we denote by $I$ (resp. $K$, $\Omega$) the term $\l
x. x$ (resp. $\l x\l y.  x$,  $(\delta \ \delta)$ where $\delta=
\l x. (x \ x)$).
\item $\#(t)$ is the code of $t$ i.e. an integer coding the way $t$ is built.

 \item We denote by $\tr$ the $\b$-reduction, by $\tr_{\Omega}$ the $\O$-reduction (i.e. $t \tr_{\Omega} \Omega$ if $t$ is
 unsolvable), by $\tr_h$ the head $\b$-reduction and by $\tr_{\b \Omega}$ the union of $\tr$ and $\tr_{\Omega}$.

\item If $R$ is a notion of reduction, we denote by $R^*$ its
reflexive and transitive closure.
\item Let $x$ be a free variable of a term $t$. We denote by $x
\in \b\O(t)$ the fact that $x$ does occur in any $t'$ such that $t
\tr_{\b \Omega}^* t'$.
\item As usual  $(u \ t_1 \ t_2 \ ... \ t_n)$ denotes $( ...((u \ t_1) \ t_2)  \ ... \
t_n)$. $(u^n \ v)$ will denote $(u \ (u \ ... \ (u \ v)...))$ and
$(v \ u^{\sim n})$ will denote $(v \ u \ ... \ u)$ with $n$
 occurrences of $u$.
\end{enumerate}
\end{nota}

\begin{nota}\hfill
\begin{enumerate}
  \item Unknown sequences (possibly empty) of abstractions or
  terms will be denoted with an arrow. For example $\l
  \overrightarrow{z}$ or $\overrightarrow{w}$. However, to improve readability,
  capital letters will also be used  to denote sequences. The notable exceptions are $F,A,J$ taken
from Barendregt's paper or standard notations for terms as $I, K,
\O$. When the meaning is not clear from the context, we will
explicitly say something as ``the sequence $\l
  \overrightarrow{z}$ of abstractions".

  \item For example, to mean that a term $t$ can
be written as some abstractions followed by the application of the
variable $x$ to some terms, we will say that $t=\l
  \overrightarrow{z}. (x \ \overrightarrow{w})$ or $t=\l Z. (x \
  W)$.
  \item If $R,S$ are sequences of terms of the same length, $R
  \tr^* S$ means that each term of the sequence $R$ reduces to the
  corresponding term of the sequence $S$.

\end{enumerate}
\end{nota}

\begin{lem}\label{commutation}\hfill
\begin{enumerate}
  \item
If $u \trbo^* v$, then  $u \tr^* w \tr_{\Omega}^* v$ for some $w$.
  \item $\trbo^*$ satisfies the Church-Rosser property :
if $t \trbo^* t_1$ and $t \trbo^* t_2$, then  $t_1 \trbo^* t_3$ and $t_2
\trbo^* t_3$ for some $t_3$.
\end{enumerate}

\end{lem}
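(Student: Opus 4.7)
The plan is to prove part (1) by a local postponement argument and then derive part (2) from (1) via the Hindley--Rosen lemma. The key technical fact used throughout is that unsolvability is preserved by $\b$-reduction and by substitution for a free variable: if $r$ has no head normal form then no head reduction of $r$ ever exposes a free variable $y$ at the head (otherwise $r$ would be solvable), so the head reduction of $r[y{:=}N]$ proceeds exactly as for $r$ up to substitution and is still infinite; and since $r \tr r'$ implies $r =_\b r'$, an unsolvable $r$ $\b$-reduces only to unsolvable terms.

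For (1), by induction on the length of a $\trbo^*$ reduction it suffices to postpone one $\O$-step past one $\b$-step: given $t \tr_\O u \tr v$, produce $w$ with $t \tr^* w \tr_\O^* v$. Let $p$ be the position at which an unsolvable subterm $r$ of $t$ is contracted to $\O$ (so $u = t[p{:=}\O]$) and let $q$ be the position of the $\b$-redex $(\l y.M)N$ fired in $u$. The proof is by case analysis on the relation between $q$ and $p$. If $p$ and $q$ are disjoint, fire the corresponding $\b$-redex in $t$ and then $\O$-contract $r$. If $q$ is strictly above $p$ with the $\O$ lying inside $M$, fire the $\b$-redex in $t$; the result contains one copy of $r$ or of $r[y{:=}N]$, still unsolvable by the preservation fact, which is then $\O$-contracted. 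If $q$ is strictly above $p$ with the $\O$ lying inside $N$, fire the $\b$-redex in $t$; the result contains possibly several copies of $r$, each still unsolvable, which are $\O$-contracted in turn. If $q = p$, the $\b$-redex is $\O$ itself, which $\b$-reduces to $\O$, so $v = u$ and one takes $w = t$. The case $q$ strictly below $p$ cannot occur, since $\O$ has no proper $\b$-redex.

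For (2), invoke the Hindley--Rosen lemma. The relation $\tr^*$ is Church--Rosser by the classical theorem. The relation $\tr_\O^*$ is Church--Rosser by a short case analysis on whether two contracted positions are disjoint, nested, or equal, using again that replacing an unsolvable subterm by $\O$ leaves the other unsolvable positions still unsolvable. Finally, the one-step commutation $\tr_\O \cdot \tr \subseteq \tr^* \cdot \tr_\O^*$ is exactly what the case analysis of part (1) delivers. Hindley--Rosen then yields the Church--Rosser property for $\trbo^* = \tr^* \cup \tr_\O^*$. The main obstacle is the duplication sub-case of (1) where $r$ sits inside the argument of a $\b$-redex and is substituted several times: one must know that each copy is still unsolvable, which reduces to the preservation of unsolvability under substitution mentioned in the first paragraph. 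Once this preservation fact is in hand, everything else is bookkeeping.
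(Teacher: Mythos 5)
The paper offers no proof of this lemma---it simply cites Barendregt's book---so there is nothing to compare line by line; your proof is essentially the standard one from that reference (postponement of $\O$-steps, plus Hindley--Rosen), and its substance is correct, including the key preservation facts (unsolvability is stable under $\b$-reduction, under substitution, and under replacing an unsolvable subterm by $\O$). Two points need tightening. First, lifting the single-step exchange to a whole $\trbo^*$-sequence cannot be done ``by induction on the length'': in the duplication sub-case one $\O$-step becomes several, so the total length may grow; the right measure is the number of $\b$-steps, which the exchange never increases. Second, for Hindley--Rosen you need $\tr^*$ and $\tr_\O^*$ to \emph{commute}, i.e.\ to complete the span $u \mathrel{{}^*{\leftarrow}} t \tr_\O^* v$, whereas part (1) establishes \emph{postponement} of the composite $t \tr_\O u \tr v$; these are different diagrams. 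The case analysis for commutation is nearly the same but not identical---in particular the case where the $\b$-redex lies strictly inside the unsolvable subterm, which you correctly rule out for postponement, does occur for commutation (and is harmless: the contractum is still unsolvable)---and for the tiling argument to terminate you should note that the local commuting diagram closes with at most one $\b$-step on the $\b$-side. Both repairs are routine, so I would call this a correct proof with bookkeeping left implicit rather than a gap.
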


\begin{proof}
See, for example, \cite{Bar85}.
\end{proof}


\begin{thm}\label{CR}
If $t \tr^* t_1$ and $t \tr^* t_2$, then  $t_1 \tr^* t_3$ and $t_2
\tr^* t_3$ for some $t_3$. Moreover $\#(t_3)$ can be computed from
$\#(t_1)$, $\#(t_2)$ and a code for the reductions $t \tr^* t_1$,
$t \tr^* t_2$.
\end{thm}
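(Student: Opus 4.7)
The plan is to follow the standard Tait--Martin-L\"of proof of the Church--Rosser theorem and then read off its effective content. First I would define parallel reduction $\Rightarrow$ by the usual clauses ($x \Rightarrow x$; congruence under $\l$ and application; and $((\l x.u)\ v) \Rightarrow u'[x:=v']$ whenever $u \Rightarrow u'$ and $v \Rightarrow v'$) and observe that $\tr \;\subseteq\; \Rightarrow \;\subseteq\; \tr^{*}$, so that $\tr^{*}$ and $\Rightarrow^{*}$ coincide.

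Next I would establish the diamond property for $\Rightarrow$ via the maximal development trick: for each term $t$ one defines a term $t^{*}$ by recursion on $t$ (contracting every $\b$-redex already present in $t$) and shows by induction on a derivation of $t \Rightarrow t'$ that $t' \Rightarrow t^{*}$. In particular, if $t \Rightarrow t_1$ and $t \Rightarrow t_2$ then $t^{*}$ joins $t_1$ and $t_2$. This construction is manifestly effective: $\#(t^{*})$ is a primitive recursive function of $\#(t)$, and similarly the parallel reductions $t_i \Rightarrow t^{*}$ are produced uniformly. The diamond property for $\Rightarrow^{*}$, and hence Church--Rosser for $\tr^{*}$, then follows from the classical tiling argument, in which a rectangular grid is filled one cell at a time by the single-step diamond.

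For the effective second half of the statement, I would observe that a code for a reduction $t \tr^{*} t_i$, presented as a list of single $\b$-steps each specified by the position of the contracted redex, translates in a primitive recursive way into a sequence of parallel-reduction steps. Running the grid construction on the two sequences obtained from $t \tr^{*} t_1$ and $t \tr^{*} t_2$ then outputs $\#(t_3)$ together with explicit codes for the joining reductions $t_1 \tr^{*} t_3$ and $t_2 \tr^{*} t_3$, each cell of the grid being filled by the effective single-step diamond. The main (and quite minor) obstacle is just the bookkeeping for the translation between single $\b$-steps and parallel steps, and for keeping track of redex positions after substitutions; no new mathematical content is needed beyond the classical argument, which is why the paper can, in practice, simply refer to \cite{Bar85}.
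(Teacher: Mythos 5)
Your proposal is correct and takes essentially the same route as the paper, which simply defers to the classical Church--Rosser proof in \cite{Bar85}: the Tait--Martin-L\"of argument via parallel reduction and complete developments is exactly the standard proof being cited. Your additional observation that every step of that argument is effective, so that $\#(t_3)$ and codes for the joining reductions are computable from $\#(t_1)$, $\#(t_2)$ and the codes of the given reductions, is the routine reading-off of the constructive content that the second half of the statement requires.
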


\begin{proof}
See \cite{Bar85}.
\end{proof}

\begin{defi}\hfill
\begin{enumerate}
\item We denote by $\simeq$ the equality modulo $\trbo^*$ i.e.
$u \simeq v$ iff there is $w$ such that $u \trbo^* w$ and $v \trbo^*
w$ and  we denote by $[t]_{\mathcal{H}}$ the class of $t$ modulo
$\simeq$.
\item For  $\l x.F \in \Lambda^{\circ}$, the range of $\l x.F$ in $\mathcal{H}$ is  the set
$\Im (\l x.F) = \{[F[x:= u]]_{\mathcal{H}}$ / $u \in \Lambda^{\circ}\}$.
\item A closed term $\l x.F$ has the range property for $\mathcal{H}$ if the set $\Im (\l x.F)$
is either infinite or has a unique element.
\end{enumerate}
\end{defi}

\begin{thm}\label{Andrew}
There is a term $\l x.F \in \Lambda^{\circ}$  that has not the range
property for $\mathcal{H}$.
\end{thm}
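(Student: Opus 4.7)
The plan is to exhibit, following Polonsky \cite{Pol12}, a closed term $\l x.F$ whose range $\Im(\l x.F)$ has exactly two elements. The approach splits naturally into three stages: construction of $F$ via a fixed point; proof that the range has cardinality at most $2$; and exhibition of two witnesses realising both classes.

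For the construction I would use a fixed-point combinator to define $F$ so that the head reduction of $F[x:=A]$ extracts at most one bit of information about $A$ modulo $\mathcal{H}$. The idea is to design the body of $F$ so that, for every closed $A$, the head reduction either terminates quickly on a fixed canonical head normal form $M_0$, or enters a controlled self-referential loop that is resolved into a second canonical form $M_1$. Any intermediate subterms with unsolvable behaviour are absorbed by the $\Omega$-collapse inherent in $\mathcal{H}$ and therefore cannot create further equivalence classes.

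The hard step, and the main obstacle, is proving exhaustiveness: no closed $A$ can push $F[x:=A]$ into a third $\simeq$-class. This requires a careful inductive analysis of the head reduction of $F[x:=A]$, using Lemma \ref{commutation} to interleave $\b$ and $\Omega$ steps and Theorem \ref{CR} to make different reduction sequences converge. One must argue that whatever normal form (or unsolvable behaviour) emerges from a subterm of the form $(A \ \ov{t})$, its contribution to $F[x:=A]$ modulo $\simeq$ is already determined by the two-case dichotomy encoded in the fixed point; otherwise, a third putative class would yield a contradiction with the self-referential definition.

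Once exhaustiveness is established, producing two explicit closed terms $A_0,A_1$ with $F[x:=A_0] \not\simeq F[x:=A_1]$ is routine, since the two $\simeq$-classes can be separated by inspecting head normal forms. This completes the outline; the concrete fixed-point construction and the detailed verification of exhaustiveness are the delicate parts, and for these we refer to \cite{Pol12}.
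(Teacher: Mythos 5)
The paper's own ``proof'' of this theorem is simply the citation ``See \cite{Pol12}'', and your proposal --- after sketching the shape of Polonsky's argument (a fixed-point term whose range in $\mathcal{H}$ has exactly two elements) --- defers both the concrete construction and the exhaustiveness argument to that same reference, so it is essentially the same approach. Your sketch is a fair high-level description of Polonsky's strategy, but note that it carries no independent proof content: the two steps you flag as delicate are precisely the entire mathematical substance of the result.
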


\begin{proof}
See \cite{Pol12}.
\end{proof}

\begin{defi}
Let $U, V$ be finite sequences of terms.
\begin{enumerate}
\item  We denote by $U::V$ the
list obtained by putting $U$  in front of $V$.
\item  $U \sqsubseteq V$ means that
some initial subsequence of $V $ is obtained from $ U$ by
substitutions and reductions.
\end{enumerate}
\end{defi}

\begin{lem}\label{transitive}\hfill
\begin{enumerate}
  \item $U \sqsubseteq
V$ iff there is a substitution $\s$ such that $\s(U)$ reduces to
an initial segment of $V$.
  \item The relation $\sqsubseteq $ is transitive.
\end{enumerate}
\end{lem}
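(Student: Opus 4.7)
My plan is to establish part (1) first, since it gives a convenient normal form for $\sqsubseteq$, and then to deduce part (2) by a straightforward composition argument.

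For the forward direction of (1), a witness for $U \sqsubseteq V$ is by definition a finite chain of steps, each being either a substitution applied pointwise to the list of terms or a $\b$-reduction step on one of its members, taking $U$ to some initial segment $V'$ of $V$. Since both kinds of operations act pointwise and preserve length, we automatically have $|V'|=|U|\le |V|$. I would argue by induction on the number of steps, using the standard fact that $\b$-reduction is stable under substitution: if $u\tr^* v$ then $\s(u)\tr^* \s(v)$ for any substitution $\s$. This lets me commute any ``reduce, then substitute'' pair into ``substitute, then reduce'', so that all the substitutions can be pushed to the front of the chain. Composing the resulting substitutions into a single substitution $\s$ (with the usual $\a$-renaming convention to avoid capture) gives $\s(U)\tr^* V'$, as required. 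The backward direction is immediate, since a single substitution followed by termwise reductions is itself a legal instance of ``substitutions and reductions''.

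For part (2), assume $U\sqsubseteq V$ and $V\sqsubseteq W$. Using part (1) twice, pick substitutions $\s,\t$ and initial segments $V'$ of $V$ and $W'$ of $W$ with $\s(U)\tr^* V'$ and $\t(V)\tr^* W'$. By the length remark above, $|V'|=|U|$ and $|W'|=|V|$, so $|U|\le |V|\le |W|$. Let $W''$ be the prefix of $W'$ (hence of $W$) of length $|U|$; since termwise reduction on a sequence restricts to any prefix, the first $|U|$ components of $\t(V)\tr^* W'$ give $\t(V')\tr^* W''$. Applying $\t$ to the reduction $\s(U)\tr^* V'$ and again invoking stability of reduction under substitution yields $\t(\s(U))\tr^* \t(V')\tr^* W''$. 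Hence $(\t\circ\s)(U)$ reduces to an initial segment of $W$, and by part (1) this gives $U\sqsubseteq W$.

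The only delicate point is the bookkeeping in the commutation argument of part (1): one must be careful about variable capture when pushing substitutions past one another and when composing them into a single $\s$. This is entirely standard and handled implicitly by working up to $\a$-conversion, but it is the only step that requires some care. Once part (1) is available, transitivity is essentially a one-line diagram chase.
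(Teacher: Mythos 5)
Your proof is correct and fills in exactly the standard argument the paper dismisses as ``Easy'': normalizing a chain of substitutions and reductions into a single substitution followed by reductions via stability of $\b$-reduction under substitution, then composing two such witnesses for transitivity. Nothing to object to; the length bookkeeping and the restriction of the reduction $\t(V)\tr^* W'$ to the prefix of length $|U|$ are handled correctly.
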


\begin{proof}
Easy.
\end{proof}

\begin{nota}

We will have to use the following notion. {\em A sub-term $u$ of a
term $v$ comes in head position during the head reduction of $v$}.
This means the following: $v$ can be written as $C[u]$ where $C$
is some context with exactly one hole. During the head reduction
of $C$ the hole comes in head position i.e $C$ reduces to $\l
\overrightarrow{x}. ([] \ \overrightarrow{w})$ for some
$\overrightarrow{w}$. The only problem in making this definition
precise is that, during the reduction of $C$ to $\l
\overrightarrow{x}. ([] \ \overrightarrow{w})$ the potentially
free variables of $u$ may be substituted and we have to deal with
that. The notations and tools developed in, for example,
\cite{Dav01}, \cite{DN95} allows to do that precisely. Since
this is intuitively quite clear and we do not need any technical
result on this definition, we will not go further.

\end{nota}

\section{The different cases}

Let $t=\l x. F[x]$ be a closed term. First observe that

\begin{enumerate}
\item If $x \not\in \b\O(F)$, then the range of $t$ is a singleton.
  \item If $x \in \b\O(F)$ and $F$ is normalizable,  then the range of 
$t$ is trivially infinite since then the
set $\{[F[x := \l x_1...\l x_n c_k]]_{\mathcal{H}}$ / $k \in \N\}$ is
infinite where $n$ is the size of  the normal form of $F$.
  \item More generally, if $t$ has a finite B\"ohm tree then it
  satisfies the range property.
\end{enumerate}

\medskip
\noindent From now on,  we thus fix terms $F$ and $A$. We assume that:

\begin{itemize}
  \item $F$ is not normalizable and
has a unique free variable denoted as $x$ such that $x \in
\b\O(F)$. In the rest of the paper we will write $t[A]$ instead of
$t[x:=A]$.
  \item $F[A]\not \simeq F[\Omega]$
\end{itemize}

\begin{nota}\hfill
\begin{enumerate}
  \item The different occurrences of a free
  variable $x$ in a term will be denoted as $x_{[i]}$ for
  various indexes $i$.
  \item
Let $t$ be a term and $x_{[i]}$ be an occurrence of the (free)
variable $x$ in $t$. We denote by  $Arg(x_{[i]},t)$  (this
  is called the scope of $x_{[i]}$ in \cite{Bar85}) the maximal
list of arguments of $x_{[i]}$ in $t$ i.e
  the list $V$ such that
  $([] \ V)$ is the applicative context of $x_{[i]}$ in $t$.
  \end{enumerate}
\end{nota}

\noindent Since $Arg(x_{[i]},t)$ may contain variables that are bounded in
$t$ and since a term is defined modulo $\alpha$-equivalence, this
notion is not, strictly speaking, well defined. This is not
problematic and we do not try to give a more formal definition.

\begin{lem}\label{residu}
Assume $u \tr^* v$ and $x_{[i]}$ (resp. $x_{[j]}$) is an
occurrence of $x$ in $u$ (resp. $v$) such that $x_{[j]}$ is a
residue of $x_{[i]}$. Then $Arg(x_{[i]}, u) \sqsubseteq Arg(x_{[j]},v)$.
\end{lem}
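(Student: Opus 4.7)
The plan is to reduce the statement to a single $\b$-step and then chain the result via the transitivity of $\sqsubseteq$ (Lemma \ref{transitive}.2), following the designated line of residues from $x_{[i]}$ down to $x_{[j]}$. The base case of zero steps is immediate: take $\s$ to be the identity substitution in the criterion of Lemma \ref{transitive}.1.

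For a single $\b$-step $u \tr v$ contracting a redex $(\l y. M) \ N$, I write $Arg(x_{[i]}, u) = [V_1, \ldots, V_n]$ and assume by $\a$-conversion that $y$ is fresh. Since any two sub-terms of $u$ are either disjoint or nested, the redex stands in one of four possible relations to the applicative context $([] \ V_1 \ldots V_n)$ of $x_{[i]}$, and I would treat each by exhibiting a substitution $\s$ witnessing $\sqsubseteq$. If the redex is disjoint from the applicative context, nothing relevant changes and $\s$ can be the identity. If the redex lies strictly inside some $V_k$, the list of arguments reduces componentwise and $\s$ is the identity again. If the applicative context sits inside $M$, maximality of $Arg$ forces the immediate context of $x_{[i]}$ in $M$ not to be of head-application form; this shape is preserved by the substitution $y:=N$, so $Arg(x_{[j]}, v) = [V_1[y:=N], \ldots, V_n[y:=N]]$ and I take $\s = [y:=N]$.

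The delicate case is when the applicative context sits inside $N$. Each occurrence of $y$ in $M$ produces a copy of $N$ and hence a residue of $x_{[i]}$; I fix the residue $x_{[j]}$ corresponding to the designated occurrence. Since $y$ is fresh, the copy of $N$ inside $v$ is literally $N$, and the applicative context of $x_{[j]}$ in $v$ begins with $V_1, \ldots, V_n$. It can then be extended by further arguments contributed by the surrounding shape of that $y$-occurrence in $M$, and possibly also by the outer context of $(\l y. M) \ N$ in $u$ if $x_{[i]}$ was at the head of $N$ and $y$ at the head in $M$. In every case, $Arg(x_{[i]}, u)$ appears as an initial segment of $Arg(x_{[j]}, v)$, so the identity substitution witnesses $\sqsubseteq$.

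The main obstacle will be the bookkeeping in this last case, especially tracking how head-position arguments can accumulate both through the substitution $y:=N$ and through the outer context in $u$. Once the single-step analysis is settled, induction on the length of the reduction combined with Lemma \ref{transitive}.2 yields the general statement.
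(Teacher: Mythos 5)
Your proposal is correct and follows essentially the same route as the paper: reduce to a single $\b$-step via transitivity of $\sqsubseteq$, then case-analyse the position of the contracted redex relative to the applicative context of $x_{[i]}$ (the paper leaves this case analysis as "easily done"). One small imprecision: in the case where the applicative context sits inside $M$, the equality $Arg(x_{[j]}, v) = [V_1[y:=N], \ldots, V_n[y:=N]]$ can fail when $M$ is exactly $(x\ V_1 \ldots V_n)$ and the redex itself has further arguments in $u$ — the list then strictly extends — but this is harmless since $\sqsubseteq$ only asks for an initial subsequence.
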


\begin{proof}
Since the relation $\sqsubseteq$ is transitive it is enough to
show the result when $u \tr v$. This is easily done by considering
the position of the reduced redex.
\end{proof}

\begin{defi}
Let $x_{[i]}$ be an occurrence of $x$ in some term $t$. We say
that $x_{[i]}$ is pure in $t$ if there is no other occurrence
$x_{[j]}$ of $x$ in $t$ such that $x_{[i]}$ occurs in one of the
elements of the list $Arg(x_{[j]},t)$.
\end{defi}

For example, let $t = (x \ (x \ y))$. It can be written as  $(x_{[1]} \ (x_{[2]} \ y))$
 where the occurrence $x_{[1]}$ is pure but
$x_{[2]}$ is not pure and $Arg(x_{[1]}, t)=(x_{[2]} \ y)$.

Note that, if $x_{[1]}, ..., x_{[n]}$ are all the pure occurrences
of $x$ in $t$, there is a context $C$ with holes $[]_1, ..., []_n$
such that $t=C[[]_i=(x_{[i]} \ Arg(x_{[i]},t)) : i=1 ... n]$ and
$x$ does not occur in $C$.

\begin{lem}\label{ajout}
Let $t, t'$ be some terms such that  $t$ reduces to  $t'$. Assume
that $x_{[i']}$ is a residue in  $t'$ of  $x_{[i]}$ in $t$ and $
x_{[i']}$ is pure in $ t'$. Then  $x_{[i]}$ is pure in  $t$.
\end{lem}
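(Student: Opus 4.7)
The plan is to argue the contrapositive. Assuming $x_{[i]}$ is \emph{not} pure in $t$, fix an occurrence $x_{[j]} \ne x_{[i]}$ in $t$ and an argument $V_k \in Arg(x_{[j]},t)$ such that $x_{[i]}$ occurs inside $V_k$; the goal becomes to produce a residue $x_{[j']}$ of $x_{[j]}$ in $t'$ whose $k$-th argument contains $x_{[i']}$.

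First I would reduce to the one-step case $t \tr t'$ by induction on the length of the reduction. The induction goes through cleanly because residues compose: a residue $x_{[i']}$ in $t'$ of $x_{[i]}$ in $t$ factors, through any intermediate term $t_1$, as a residue of some $x_{[i_1]}$ which is itself a residue of $x_{[i]}$; the one-step claim applied at $t \tr t_1$ gives that $x_{[i_1]}$ is not pure in $t_1$, and the inductive hypothesis applied to $t_1 \tr^* t'$ finishes the argument. So it suffices to analyse a single $\b$-step contracting some redex $R = (\l y. M) \ N$, and I would write $A = (x_{[j]} \ V_1 \ \dots \ V_n)$ for the maximal applicative context of $x_{[j]}$ in $t$.

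The core of the argument is then a case analysis on the relative position of $R$ and $A$. If $R$ is disjoint from $A$, then $A$ (together with $x_{[i]}$ sitting inside $V_k$) is preserved verbatim in $t'$ and we are done. If $R$ is contained in $A$, then, since $x_{[j]}$ is a variable and not an abstraction, $R$ must lie inside some $V_l$; hence $(x_{[j']} \ V_1 \ \dots \ V_l' \ \dots \ V_n)$ appears in $t'$ and Lemma \ref{residu} locates $x_{[i']}$ inside the $k$-th argument of $x_{[j']}$. If $A$ lies inside $N$, then either $N$ is erased---contradicting the assumed existence of $x_{[i']}$---or $N$ is duplicated, every surviving copy faithfully reproducing the placement of $x_{[i]}$ inside $V_k$ inside $A$. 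Finally, if $A$ lies inside $M$, then substituting $N$ for $y$ in $M$ leaves $x_{[j]}$ and $x_{[i]}$ intact (they are occurrences of $x$, not of $y$) and merely transforms each $V_l$ into $V_l[y := N]$; so the residue $x_{[j']}$ has $V_k[y:=N]$ as its $k$-th argument and this term contains $x_{[i']}$.

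The main potential obstacle is this last sub-case: one must verify that substituting $N$ for $y$ inside $V_k$ neither captures $x_{[i]}$ nor disturbs the head/argument structure of $A$. This is dealt with by working modulo $\a$-equivalence (renaming $y$ away from the free variables of $N$ and from $x$) and by the trivial observation that $x \ne y$, so no occurrence of $x$ in $V_k$ is touched. Modulo this standard bookkeeping, the whole proof is a direct structural case check.
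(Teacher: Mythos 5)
Your proof is correct: the reduction to a single $\b$-step via transitivity of residues, followed by the case analysis on the position of the contracted redex relative to the applicative context of $x_{[j]}$, is exactly the routine argument the paper compresses into ``Immediate'' (and it mirrors the proof sketch the paper gives for Lemma~\ref{residu}). All cases are covered, including the erasure/duplication subtleties, so nothing further is needed.
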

\begin{proof}
Immediate.
\end{proof}

The next technical result is akin to Barendregt's lemma discussed
by de Vrijer
 in Barendregt's festschrift (see \cite{Vri07}).
It has a curious history discussed there. First proved by van Dalen
 in \cite{Daa80}, it appears as an exercise in Barendregt's book at the
end of chapter 14. Its truth may look strange. Note that the
reductions coming from the term $B$ are done {\em in the holes of
the reduct   $G$} of $F$.

\begin{lem}\label{reduction}
Let $B$ be a closed term. Assume $F[B]\tr^* t$. Then there is a
$G$ such that
\begin{itemize}
\item $F\tr^*G=D[[]_i=w_i \ : \ i \in {\mathcal I}]$ where $D$ is a context with  holes
$[]_i$ (indexed by the set ${\mathcal I}$ of all the pure occurrences $x_{[i]}$ of $x$ in $G$)
and $w_i=(x_{[i]} \ Arg(x_{[i]}, G))$.
\item  $t=D[[]_i=w'_i \ : \ i \in {\mathcal  I}]$
where $w_i[B] \tr^*w'_i$.
\end{itemize}
\end{lem}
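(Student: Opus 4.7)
The plan is to proceed by induction on the length of the reduction $F[B] \tr^* t$. In the base case ($t = F[B]$), I take $G = F$ and use the observation recalled just before the lemma to write $F = D[[]_i = w_i : i \in \mathcal{I}]$, where $\mathcal{I}$ indexes the pure occurrences of $x$ in $F$ and $x$ does not appear in $D$. Then $F[B] = D[[]_i = w_i[B]]$, so setting $w'_i = w_i[B]$ gives the conclusion.

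For the inductive step, assume $F[B] \tr^* t_0 \tr t$ and apply the induction hypothesis to $t_0$ to obtain $G_0$, $D_0$, $\mathcal{I}_0$, and the terms $w_i$, $w'_i$. Let $r$ be the redex reduced in $t_0 \tr t$. Since each $w_i = (x_{[i]} \ V_i)$ has a \emph{maximal} argument list in $G_0$, the hole $[]_i$ in $D_0$ is never in head-of-application position, hence in $t_0 = D_0[w'_i]$ no $w'_i$ is being applied to any external argument. This forces $r$ to fall into one of two disjoint cases: either $r$ lies entirely inside some $w'_{i_0}$ (Case A), or the head $\l$ of $r$ lies in the skeleton $D_0$ (Case B).

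In Case A, $w'_{i_0}$ reduces to some $w''_{i_0}$ and I keep $G = G_0$, $D = D_0$, replacing only $w'_{i_0}$ by $w''_{i_0}$; the extended chain $w_{i_0}[B] \tr^* w'_{i_0} \tr w''_{i_0}$ delivers the result. In Case B, $r$ corresponds to an honest redex $r^{\ast}$ at the same position in $G_0 = D_0[w_i]$, obtained by replacing each $w'_j$ occurring in $r$ by $w_j$; this does not destroy the redex pattern precisely because the head $\l$ sits in the skeleton. Reducing $G_0 \tr G_1$ at $r^{\ast}$ gives $F \tr^* G_1$; I set $G = G_1$ and write $G_1 = D_1[[]_j = w_j : j \in \mathcal{I}_1]$ canonically at its pure occurrences. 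By the contrapositive of Lemma \ref{ajout}, every pure occurrence of $x$ in $G_1$ is a residue of a pure occurrence in $G_0$; residues of pure occurrences that land inside the argument list of another surviving pure occurrence become non-pure and are absorbed into the new $V_j$'s. In $t$, the corresponding $B$-substituted copies of the relevant $w'_i$ are absorbed in the same pattern into the hole-fillers $w''_j$. Using that $B$ is closed, so that its substitution commutes with the inner substitution $[y := N']$ governing $r$, a direct calculation yields $w_j[B] \tr^* w''_j$.

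The main obstacle is the bookkeeping in Case B: one has to pair each new pure occurrence in $G_1$ with the correct repackaged hole-filler in $t$, accounting for the duplications, erasures, and demotions from pure to non-pure that the outer reduction may cause. This accounting is routine once the definitions are unwound and one uses that $B$ is closed, so that substitution of $B$ for $x$ commutes with the substitution occurring in the contraction of $r$.
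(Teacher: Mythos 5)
Your proof is correct, but it takes a genuinely different route from the paper's. The paper inducts on the pair (length of a \emph{standard} reduction of $F[B]$ to $t$, complexity of $F$) and peels the reduction from the front: the case analysis is on the shape of $F$ itself ($\l y.F'$, head variable $y$, frozen head redex, contracted head redex, head occurrence of $x$), so the decomposition $D$ is built top-down along the structure of $F$ and a hole is created only when the head of the current subterm is $x$; the price is an appeal to standardization. You instead induct on the length of an arbitrary reduction and peel the \emph{last} step, analyzing where the contracted redex sits relative to the decomposition supplied by the induction hypothesis. Your key observation --- that maximality of $Arg(x_{[i]},G_0)$ forces every hole of $D_0$ out of function position, so a redex of $t_0$ either lies inside a filler $w'_{i_0}$ or has its $(\l y.\,a\ b)$ pattern entirely in $D_0$ and hence lifts to a redex of $G_0$ --- is sound and does for you the work that standardization does for the paper. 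The cost is the Case~B bookkeeping, and one phenomenon there deserves to be named explicitly alongside duplication, erasure and purity demotion: a hole sitting at the root of the argument $b$ can land in function position after the substitution $a[y:=b]$, so that the argument list of the corresponding pure occurrence properly \emph{grows} (this is exactly the $\sqsubseteq$ of Lemma~\ref{residu}); the repackaged fillers are then $(w_j\ \ov{c})$ on the $G_1$ side and $(w'_j\ \ov{c}\,')$ on the $t$ side, and $w_j[B]\tr^* w'_j$ together with $\ov{c}[B]\tr^*\ov{c}\,'$ and the closedness of $B$ still yields the required reduction. With that point folded into your accounting the argument closes, at essentially the same level of rigour as the paper's own proof.
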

\begin{proof}
By induction on $\langle lg(F[B]\tr^* t), cxty(F)\rangle$ where
$lg(F[B]\tr^* t)$ is  the length of a {\em standard} reduction of
$F[B]$ to $t$ and $cxty(F)$ is the complexity of $F$, i.e the
number of symbols in $F$.

- If $F = \l y.F'$, then $t = \l y.t'$ where $F'[B] \tr^*t'$.
Since $lg(F[B]\tr^* t) = lg(F'[B]\tr^* t')$ and  $cxty(F') <
cxty(F)$, we conclude  by applying the induction hypothesis on the
reduction $F'[B] \tr^*t'$.

- If $F=(y \ F_1...F_n)$, then $t = (y \ t_1...t_n)$ where $F_i[B]
\tr^* t_i$. Since $lg(F_i[B]\tr^* t_i) \leq lg(F[B]\tr^* t)$ and
$cxty(F_i) < cxty(F)$, we conclude  by applying the induction
hypothesis on the reductions $F_i[B] \tr^* t_i$.

- If $F=(\l y. U \ V \ F_1...F_n)$ where the head redex is not
reduced during the reduction $F[B]\tr^* t$, then $t = (\l y u \ v
\ f_1...f_n)$ where $U[B] \tr^*u$, $V[B] \tr^*v$ and $F_i[B]
\tr^*f_i$. Since $lg(U[B]\tr^* u) \leq lg(F[B]\tr^* t)$,
$lg(V[B]\tr^* v) \leq lg(F[B]\tr^* t)$ , $lg(F_i[B]\tr^* f_i) \leq
lg(F[B]\tr^* t)$, $cxty(U) < cxty(F)$, $cxty(V) < cxty(F)$ and
$cxty(F_i) < cxty(F)$, we conclude  by applying the induction
hypothesis on the reductions $U[B] \tr^*u$, $V[B] \tr^*v$, $F_i[B]
\tr^*f_i$.

- If $F=(\l y.U \ V \ \overrightarrow{F})$ and the first step of
the standard reduction reduces the head redex, then $F[B] \tr =
(U[y:=V] \ \overrightarrow{F})[B] \tr^* t$. Let $F' = (U[y:=V] \
\overrightarrow{F})$. Since $lg(F'[B] \tr^* t) < lg(F[B]\tr^* t)$,
we conclude  by applying the induction hypothesis  on the
reduction $F'[B] \tr^* t$.

- If $F = (x \ \overrightarrow{F})$, then $G=F$ and $D$ is the
term made of a single context $[]$ and $w=F$.
\end{proof}

\noindent The next lemma concerns the reduction of $F$ under a recursive
cofinal strategy. The canonical one is the Gross--Knuth strategy,
where one takes, at each step, the full development of the
previous one.\enlargethispage{\baselineskip}

\begin{lem}\label{Fk}
There is a sequence $(F_k)_{k \in \N}$ such that
\begin{enumerate}
  \item $F_0=F$ and, for each $k$, $F_k \tr^* F_{k+1}$.
  \item If $F \tr^* G$, then $G \tr^* F_k$ for some $k$.
  \item The function $k \hookrightarrow \#(F_k)$ is recursive.
\end{enumerate}
\end{lem}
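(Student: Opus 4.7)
The plan is to follow the hint in the paragraph preceding the lemma and take $F_{k+1}$ to be the \emph{complete development} of $F_k$ (the Gross--Knuth step): mark every $\b$-redex occurrence of $F_k$, then contract all marked redexes together with residues of marked redexes only. By the Finite Developments Theorem this process terminates and the resulting term is independent of the order of contractions, so $F_{k+1}$ is unambiguously defined. Setting $F_0:=F$, clause~(1) is then immediate, since the sequence of contractions just described realizes $F_k\tr^*F_{k+1}$.

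For clause~(3), the key observation is that $\#(F_{k+1})$ is computable from $\#(F_k)$: one enumerates the $\b$-redex occurrences of $F_k$ (which is decidable from $\#(F_k)$), marks them, and iteratively contracts, say, the leftmost marked residue; Finite Developments guarantees termination, and the final term is $F_{k+1}$. Composing these primitive recursive steps yields a recursive function $k\mapsto\#(F_k)$.

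The substantive clause is~(2), which I would prove by induction on the length $n$ of the reduction $F\tr^*G$. The base case $n=0$ gives $G=F=F_0$. For the inductive step, write $F\tr^{n-1}G'\tr G$, where the last step contracts a single redex $R$ of $G'$; by the induction hypothesis there is $k$ with $G'\tr^*F_k$. Applying the parallel-moves (strip) lemma to the step $G'\tr G$ versus the reduction $G'\tr^*F_k$ produces a term $H$ with $G\tr^*H$ and $F_k\tr^*H$, the latter reduction contracting exactly the residues of $R$ in $F_k$. Each such residue is itself a $\b$-redex of $F_k$, hence one of the marked redexes used to define $F_{k+1}$. By Finite Developments, the partial development $F_k\tr^*H$ extends to the complete one $F_k\tr^*F_{k+1}$, so $H\tr^*F_{k+1}$, and chaining gives $G\tr^*H\tr^*F_{k+1}$ as required.

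The ingredients beyond routine residue bookkeeping are the Finite Developments Theorem and the strip lemma, both classical (see~\cite{Bar85}). The delicate point is the last step of~(2): Finite Developments must be invoked precisely to ensure that the common reduct $H$ produced by the strip lemma lies on the Gross--Knuth sequence rather than off to one side. Once this is in place, no further obstacle remains.
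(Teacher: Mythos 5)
Your proof is correct, but it takes a genuinely different route from the paper's. You implement the Gross--Knuth strategy literally (each $F_{k+1}$ is the complete development of all redexes of $F_k$) and prove cofinality by induction on the length of $F\tr^* G$, using the strip lemma to project a single contraction onto $F_k$ and the Finite Developments Theorem to show that the resulting partial development of $F_k$ extends to the complete one ending in $F_{k+1}$. The paper instead sidesteps developments entirely: invoking its effective Church--Rosser theorem (Theorem~\ref{CR}), it simply \emph{defines} $F_{k+1}$ as a computable common reduct of $F_k$ and of all the (finitely many) reducts of $F$ in fewer than $k$ steps, so that clause~(2) holds by construction and clause~(3) follows from the effectivity claim in Theorem~\ref{CR}. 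Your version buys a uniform, history-independent strategy and matches the ``canonical'' construction the paper alludes to just before the lemma, at the cost of importing FD and the residue bookkeeping in the strip lemma; the paper's version is a two-line argument given Theorem~\ref{CR}, at the cost of a sequence that depends on enumerating the whole reduction graph of $F$ up to depth $k$. Your identification of the delicate point --- that the common reduct $H$ must be shown to lie \emph{on} the Gross--Knuth sequence, which is exactly where FD is needed --- is accurate, and your handling of it (a complete development of a subset of the marked redexes, followed by a complete development of the residues of the remaining ones, is a complete development of the whole set) is the standard and correct resolution.
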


\begin{proof}
By Theorem \ref{CR}, choose $F_{k+1}$ as a common reduct of $F_k$
and all the reducts of $F$ in less than $k$ steps.
\end{proof}

\begin{defi}
Let $x_{[i]}$ be an occurrence of $x$ in some $F_k$.  We say that
$x_{[i]}$ is good in $F_k$ if it satisfies the following
properties:
\begin{itemize}
\item $x_{[i]}$ is pure in $F_k$.

\item $u[A]$ is solvable for every sub-term $u$ of $F_k$ such that
$x_{[i]}$ occurs in $u$.

Note that this implies that $(x_{[i]} \ Arg(x_{[i]}, F_k))[A]$ is solvable.
\end{itemize}
\end{defi}

\noindent Observe that every pure occurrence of $x$ in $F_{k+1}$ is a
residue of a pure occurrence of $x$ in $F_k$. This allows the
following definition.

\begin{defi}\hfill
\begin{enumerate}
\item Let ${\mathcal T}$ be the following tree. The level $k$ in ${\mathcal T}$ is the set of
pure occurrences of $x$ in $F_k$. An occurrence  $x_{[i]}$ of $x$
in $F_{k+1}$ is the son of an occurrence $x_{[j]}$ of $x$ in $F_k$
if $x_{[i]}$ is a residue of $x_{[j]}$.

\item A branch in ${\mathcal T}$ is good if, for each $k$, the occurrence $x_{[k]}$ of
$x$ in $F_k$ chosen by the branch is good in $F_k$.
\end{enumerate}
\end{defi}

\begin{thm}\label{tree}
There is an infinite branch in ${\mathcal T}$ that is good.
\end{thm}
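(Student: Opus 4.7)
The plan is to exhibit a good pure occurrence of $x$ in every $F_k$ and then apply K\"onig's lemma.

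I would begin with two preparatory facts. First, $F[A]$ is solvable: if $F[\Omega]$ is solvable then no $\Omega$ ever reaches head position during its head reduction (otherwise that reduction would diverge), so the same sequence of head reductions terminates on $F[A]$; and if $F[\Omega]$ is unsolvable then $F[\Omega]\simeq\Omega$, so the hypothesis $F[A]\not\simeq F[\Omega]$ forces $F[A]$ solvable. Hence $F_k[A]\simeq F[A]$ is solvable for every $k$. Second, the set of good occurrences is closed under predecessors in $\mathcal T$: if $x_{[i']}$ is good in $F_{k+1}$ and $x_{[i]}\in F_k$ is its predecessor, then $x_{[i]}$ is pure by Lemma~\ref{ajout}, and each sub-term $u$ of $F_k$ containing $x_{[i]}$ has a residue $u'$ in $F_{k+1}$ containing $x_{[i']}$ with $u[A]\tr^* u'[A]$, so solvability of $u'[A]$ forces solvability of $u[A]$. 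The same argument extends to any reduction $F_k\tr^* G$.

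The core of the proof is to show, more generally, that any term $t$ with free variable $x$ satisfying $t[A]\not\simeq t[\Omega]$ has a good pure occurrence. I would argue by induction on the depth $d$ of a shortest position at which the B\"ohm trees of $t[A]$ and $t[\Omega]$ differ, which is finite since these trees are distinct. Applying Lemma~\ref{reduction} to a reduction of $t[A]$ to its head normal form, one obtains $t\tr^* G=D[[]_i=(x_{[i]}\ Arg(x_{[i]},G))]$ with head normal form $D[[]_i=w'_i]$ and $(x_{[i]}\ Arg(x_{[i]},G))[A]\tr^* w'_i$. If the head of the context $D$ is one of the holes $[]_{i_0}$ (necessarily the case when $d=0$), then $x_{[i_0]}$ sits at head position of $G$, and every sub-term of $G$ containing it is, up to a $\lambda$-wrapping, a prefix $(x_{[i_0]}\ s_1\ldots s_r)$ of the head application of $G$; under $[A]$ each such prefix is a head-prefix of the solvable term $G[A]\simeq t[A]$, hence itself solvable, so $x_{[i_0]}$ is good in $G$. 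If instead the head of $D$ is a variable $y$ (necessarily bound), then $G=\lambda\overrightarrow z.(y\ G_1\ldots G_n)$, the B\"ohm trees of $t[A]$ and $t[\Omega]$ agree at the root, and the shortest differing position lies strictly inside some argument $G_l$; for this $l$, $G_l[A]\not\simeq G_l[\Omega]$ and $G_l[A]$ is solvable (otherwise $G_l[\Omega]$ would also be unsolvable, using $g[\Omega]$ solvable $\Rightarrow g[A]$ solvable, and both B\"ohm trees would equal $\Omega$ at this position). The induction hypothesis yields a good pure occurrence inside $G_l$, which remains good in $G$ because the additional ancestors $(y\ G_1\ldots G_m)$ and their $\lambda$-wrappings have the bound variable $y$ at head, hence are in head normal form and solvable under $[A]$. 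In both cases, ancestor-closure along $t\tr^* G$ transports the good occurrence back to $t$.

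Finally, $\mathcal T$ is finitely branching (each $F_k$ is a finite term, with only finitely many pure $x$-occurrences). The sub-tree of good occurrences is non-empty at every level by the preceding argument and closed under predecessors by the second preparatory fact, hence infinite; K\"onig's lemma then delivers an infinite good branch. The principal obstacle is the inductive step: one must check that the B\"ohm-tree hypothesis genuinely passes to $G_l$ (this is precisely where the asymmetry ``$g[\Omega]$ solvable $\Rightarrow g[A]$ solvable'' is essential), and that a good occurrence found deep inside $G_l$ lifts back through $G$ and then through the reduction $t\tr^* G$ to a good occurrence of $t$ in the sense of the definition.
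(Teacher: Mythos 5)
Your scaffolding is the same as the paper's: K\"onig's lemma on the finitely branching tree ${\mathcal T}$, plus closure of goodness under predecessors (via Lemma~\ref{ajout} and preservation of unsolvability by residues). That half is fine. The gap is in your ``core'' argument that every $F_k$ contains a good occurrence. You induct on ``the depth of a shortest position at which the B\"ohm trees of $t[A]$ and $t[\O]$ differ, which is finite since these trees are distinct.'' But $t[A]\not\simeq t[\O]$ does \emph{not} imply that these B\"ohm trees are distinct: $\simeq$ is joinability under $\trbo$, i.e.\ equality in $\mathcal{H}$, and $\mathcal{H}$ is strictly contained in the B\"ohm-tree theory $\mathcal{B}$. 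Worse, the failure occurs for exactly the kind of terms this theorem is about. Take the paper's own Comment and Example~\ref{CX:coxa}(2): $G\tr^*\l u\l v.(v\ (G\ (u\ I)\ v))$, $F=(G\ x)$, $A=I$. There $F[I]\not\simeq F[\O]$, yet the B\"ohm trees of $F[I]$ and of $F[\O]$ coincide --- both are the infinite spine $\l z.(z\ (z\ (z\ \cdots)))$ --- because $(I\ I^{\sim k})$ reduces to $I$ and is consumed while $(\O\ I^{\sim k})$ is unsolvable, so neither contributes a visible node. For such $F$ the quantity you induct on is undefined and your construction never gets started.

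The paper avoids B\"ohm trees entirely by proving the contrapositive with a finite collapsing argument: if no pure occurrence of $x$ in $F_k$ were good, then each pure occurrence $x_{[i]}$ either heads an application $(x_{[i]}\ Arg(x_{[i]},F_k))[A]$ that is unsolvable, or lies inside a sub-term $u$ with $u[A]$ unsolvable; in either case the same sub-term is unsolvable under $[\O]$ (the asymmetric implication ``unsolvable under $[A]$ $\Rightarrow$ unsolvable under $[\O]$'' that you also use), so $\O$-collapsing all of these yields a common $\trbo$-reduct of $F_k[A]$ and $F_k[\O]$, contradicting $F[A]\not\simeq F[\O]$. If you want to keep a direct construction of a good occurrence, you would have to replace ``first position where the B\"ohm trees differ'' by an invariant intrinsic to $\simeq$, which is considerably more delicate; the contrapositive is the short road here.
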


\begin{proof}
By Konig's Lemma it is enough to show: (1) for each $k$, there is
an occurrence of $x$ in $F_k$ that is good and (2) if the son of
an occurrence $x_{[i]}$ of $x$ is good then so is $x_{[i]}$.
\\ (1) Assume first that there is no good occurrences of $x$ in $F_k$.
This means that, for all pure occurrences $x_{[i]}$ of $x$ in
$F_k$, either $(x_{[i]} \ Arg(x_{[i]}, F_k))[A]$ is unsolvable or
this occurrence appears inside a sub-term $u$ of $F_k$ such that
$u[A]$ is unsolvable. But, if $(x_{[i]} \ Arg(x_{[i]}, F_k))[A]$
is unsolvable then so is $(x_{[i]} \ Arg(x_{[i]}, F_k))[\Omega]$
and, if $u[A]$ is unsolvable, then so is $u[\Omega]$ (proof :
consider the head reduction of $u[A]$ ; either $A$ comes in head
position or not ; in both cases the result is clear). This implies
that $F[A]\simeq F[\Omega]$. \\ (2) follows immediately from the
fact that a residue of an unsolvable term also is unsolvable and
that, if $(x \ U)[A]$ is unsolvable and $U \sqsubseteq V$ then so
is $(x \ V)[A]$.
\end{proof}

\begin{exa}
Let $G$  be a $\l$-term such that $G \tr^* \l u \l v. (v \ (G \ (x \
u) \ v))$ and $F = (G \ I)$. If we take  $ \l v. (v^k (G \ (x \ (x
\ ...(x \ I))) \ v))$ for $F_k$, what is the good
occurrence of $x$ in $F_k$ which appears in a good
branch in ${\mathcal T}$? It is none of those in $(x \ (x \ ...(x \
I)) ...)$, it is the one in $G$ !
\end{exa}\medskip

\centerline{{\bf From now on, we fix  an infinite branch in ${\mathcal
T}$ that is good}}

\medskip

\begin{nota}
We denote by $x_{(k)}$ the occurrence of $x$ in $F_k$ chosen by
the branch. Let $U_k=Arg(x_{(k)}, F_k)$.
\end{nota}

\begin{lem}\label{urs}
There is a  sequence $(\s_k)_{k \in \N}$ of substitutions and
 there are sequences $(S_k)_{k \in \N}$, $(R_k)_{k \in \N}$ of finite sequences of terms  such
that, for each $k$, $R_k$ is obtained from $\s_k(U_k)$ by some
reductions and $U_{k+1}=R_k:: S_k$.
\end{lem}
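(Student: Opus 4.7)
The plan is to obtain the sequences directly by unpacking the relation $\sqsubseteq$ at each level, using Lemma \ref{residu} and Lemma \ref{transitive}.

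First I would fix $k$ and look at the reduction $F_k \tr^* F_{k+1}$ coming from Lemma \ref{Fk}. By the definition of the tree $\mathcal{T}$, the occurrence $x_{(k+1)}$ chosen by the fixed good branch is a residue of $x_{(k)}$ under this reduction. Applying Lemma \ref{residu} to the reduction $F_k \tr^* F_{k+1}$ and to this pair of occurrences yields
\[
U_k = Arg(x_{(k)}, F_k) \;\sqsubseteq\; Arg(x_{(k+1)}, F_{k+1}) = U_{k+1}.
\]

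Next I would invoke Lemma \ref{transitive}(1), which reformulates $U_k \sqsubseteq U_{k+1}$ as the existence of a substitution $\sigma_k$ such that $\sigma_k(U_k)$ reduces to an initial segment of $U_{k+1}$. Calling this initial segment $R_k$ and letting $S_k$ be the (possibly empty) suffix of $U_{k+1}$ after $R_k$, we immediately get $U_{k+1} = R_k :: S_k$, with $R_k$ obtained from $\sigma_k(U_k)$ by reductions, which is precisely the conclusion of the lemma. Doing this for every $k$ produces the three sequences $(\sigma_k)_k$, $(R_k)_k$, $(S_k)_k$.

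There is really no obstacle here: the whole content of the statement is already packaged in Lemmas \ref{residu} and \ref{transitive}, plus the fact that consecutive occurrences on the chosen branch are residues of each other. The only point that deserves a short remark is that $\sqsubseteq$ is used ``pointwise'' on the list $U_k$, which is why we get a genuine decomposition $U_{k+1} = R_k :: S_k$ rather than merely an inclusion of some vague sort; this is exactly how $\sqsubseteq$ was defined.
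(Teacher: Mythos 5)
Your proof is correct and follows exactly the route the paper intends: its own proof is the single line ``this follows immediately from Lemma \ref{residu}'', and you have simply made explicit the intermediate steps (the residue relation along the chosen branch, the application of Lemma \ref{residu} to $F_k \tr^* F_{k+1}$, and the unpacking of $\sqsubseteq$ via Lemma \ref{transitive}(1)).
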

\begin{proof}
This follows immediately from Lemma \ref{residu}.
\end{proof}
\nobreak
\begin{defi}
We define the sequence $V_k$ by : $V_0 = U_0$ and  $V_{k+1} =
\s_k(V_k):: S_k$.
\end{defi}
\newpage

\begin{lem}\label{prem}\hfill
\begin{enumerate}
\item For each $k$, $V_k \tr^* U_k$.
\item For each $k' > k$, there is a substitution $\s_{k'k}$ such that $\s_{k'k}(S_k)$ is a sub-sequence of $V_{k'}$.
\end{enumerate}
\end{lem}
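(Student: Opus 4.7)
The plan is to prove both parts by induction, directly unfolding the recursive definition $V_{k+1}=\s_k(V_k)::S_k$ and using the description of $U_{k+1}$ coming from Lemma \ref{urs}, namely $U_{k+1}=R_k::S_k$ where $R_k$ is obtained from $\s_k(U_k)$ by some $\b$-reductions.

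For item (1), I would induct on $k$. The base case $k=0$ is immediate since $V_0=U_0$ by definition. For the inductive step, assume $V_k \tr^* U_k$. Applying the substitution $\s_k$ preserves reduction (substitutions commute with $\tr^*$ entrywise on sequences), so $\s_k(V_k)\tr^* \s_k(U_k)$, and Lemma \ref{urs} gives $\s_k(U_k)\tr^* R_k$. Concatenating the same tail $S_k$ on both sides yields
\[
V_{k+1}\;=\;\s_k(V_k)::S_k\;\tr^*\;R_k::S_k\;=\;U_{k+1},
\]
which closes the induction.

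For item (2), I would fix $k$ and induct on $k'>k$. For the base case $k'=k+1$, the very definition $V_{k+1}=\s_k(V_k)::S_k$ exhibits $S_k$ as the tail sub-sequence of $V_{k+1}$, so taking $\s_{k+1,k}$ to be the identity substitution works. For the inductive step, suppose $\s_{k'k}(S_k)$ appears as a sub-sequence of $V_{k'}$. Then $\s_{k'}(\s_{k'k}(S_k))$ appears as a sub-sequence of $\s_{k'}(V_{k'})$, hence also of $V_{k'+1}=\s_{k'}(V_{k'})::S_{k'}$. Setting $\s_{k'+1,k}:=\s_{k'}\circ\s_{k'k}$ completes the step, and in fact one obtains the explicit formula $\s_{k'k}=\s_{k'-1}\circ\cdots\circ\s_{k+1}$.

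Both items are essentially bookkeeping: the only thing one must check is that substitution and $::$-concatenation commute with reduction of sequences in the obvious way, which is immediate from the entrywise definition of $\tr^*$ on sequences and the fact that $\s(U::V)=\s(U)::\s(V)$. I do not expect any real obstacle here; the lemma is a direct unfolding of the definitions of $V_k$ together with Lemma \ref{urs}, and it is included at this point precisely to be used in the subsequent sections.
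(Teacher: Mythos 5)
Your proof is correct and follows the same route as the paper: the paper's own argument is just a terse version of your two inductions, citing Lemma \ref{urs} and giving the same formula $\s_{k'k}=\s_{k'-1}\circ\cdots\circ\s_{k+1}$ (with $\s_{k+1,k}=id$). Nothing to add.
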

\begin{proof}
This follows immediately from Lemma \ref{urs}. If $k'=k+1$,
$\s_{k'k}=id$. Otherwise $\s_{k'k}=\s_{k'-1}\circ \s_{k'-2} \circ
... \circ \s_{k+1}$
\end{proof}

\begin{defi}
We define, by induction on $k$, the sequence $\rho_k$ of
  reductions and the terms $t_k$ as follows.

\begin{enumerate}
  \item
 $\rho_0$ is the head reduction of $(A \ V_0[A])$ to its head normal form.
 \item $t_k = \l \overrightarrow{z_k}. \ (y_k
\ \overrightarrow{w_k})$ is the result of $\rho_k$.
 \item $\rho_{k+1}$ is the head reduction of $(\s_k(t_k) \ S_k[A])$ to
its head normal form.
\end{enumerate}
\end{defi}

\begin{lem}\label{head}
The term  $t_k$ is the  head normal form of $(A \ V_k[A])$.
\end{lem}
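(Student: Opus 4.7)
I proceed by induction on $k$.

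For the base case $k=0$, the claim is immediate: by definition $V_0 = U_0$ and $\rho_0$ is the head reduction of $(A \ V_0[A])$ to its head normal form $t_0$.

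For the inductive step, assume $(A \ V_k[A]) \tr_h^* t_k$ with $t_k$ in head normal form, and observe that $(A \ V_{k+1}[A]) = (A \ \sigma_k(V_k)[A] \ S_k[A])$ by the definition of $V_{k+1}$. First, apply $\sigma_k$ to the head reduction given by the induction hypothesis. Since head reduction is stable under substitution and $A$ is closed, this produces a head reduction $(A \ \sigma_k(V_k)[A]) \tr_h^* \sigma_k(t_k)$. Second, extend this reduction on the right by the arguments $S_k[A]$: because the head redex of a term $(u \ P_1 \ldots P_n)$ lies strictly to the left, the sequence of head reductions carries over to $(A \ \sigma_k(V_k)[A] \ S_k[A]) \tr_h^* (\sigma_k(t_k) \ S_k[A])$. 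The right-hand side then head reduces to $t_{k+1}$ via $\rho_{k+1}$, by the definition of $t_{k+1}$. Concatenating these two head reductions yields $(A \ V_{k+1}[A]) \tr_h^* t_{k+1}$, with $t_{k+1}$ in head normal form, closing the induction.

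The delicate point is the interaction of the substitutions $\sigma_k$ and $[A]$: pushing $\sigma_k$ through $[A]$ as above relies on the specific origin of $\sigma_k$ from the $\beta$-redexes of the reduction $F_k \tr^* F_{k+1}$ together with the closure of $A$. The two other ingredients—stability of head reduction under substitution, and the preservation of the sequence of head reductions under rightward appending of arguments—are standard properties of $\tr_h$; see \cite{Bar85}.
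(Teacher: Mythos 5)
The paper's own proof of this lemma is the single word ``Easy'', so there is no official argument to compare against; your induction on $k$, driven by the definitions of $V_{k+1}=\s_k(V_k)::S_k$, of $\rho_{k+1}$ and of $t_{k+1}$, is surely the intended one, and your handling of the interaction between $\s_k$ and $[x:=A]$ (using that $A$ is closed) is at the right level of care.

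One step, however, is stated too strongly and is false as written. From $u \tr_h^* v$ it does not follow that $(u \ \overrightarrow{P}) \tr_h^* (v \ \overrightarrow{P})$: the head redex of $(u \ P_1 \dots P_n)$ coincides with that of $u$ only when $u$ is not an abstraction, and the head reduction of $(A \ \s_k(V_k)[A])$ towards $\s_k(t_k)$ typically does pass through abstractions (already $t_k = \l \overrightarrow{z_k}.(y_k \ \overrightarrow{w_k})$ carries leading $\l$'s, and the final head steps take place under them). For instance, $\l y.(I\ y) \tr_h \l y.y$, yet the head reduction of $(\l y.(I\ y)\ b)$ is $(\l y.(I\ y)\ b) \tr_h (I\ b) \tr_h b$ and never visits $(\l y.y\ b)$; by determinism of head reduction the claimed relation fails. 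What is true, and is what your argument actually needs, is the weaker statement that if $u \tr_h^* v$ then $(u \ \overrightarrow{P})$ and $(v \ \overrightarrow{P})$ have the \emph{same} head normal form in the canonical sense. This is proved by interleaving: at each stage either $u$ is an application whose head redex is also the head redex of $(u\ \overrightarrow{P})$, so one common head step decreases the remaining head-distance from $u$ to $v$; or $u$ (hence also $v$, being one of its head reducts) is an abstraction, and the first head step of both $(u\ \overrightarrow{P})$ and $(v\ \overrightarrow{P})$ consumes $P_1$, reducing to the same configuration with one argument fewer (using that head steps are preserved under substitution of $P_1$). Since both the distance and the length of $\overrightarrow{P}$ are finite the process terminates, either with $u=v$ or with $\overrightarrow{P}$ exhausted, and determinism of head reduction concludes. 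Substituting this fact for your appending step, the induction goes through and $t_{k+1}$ is indeed the head normal form of $(A \ V_{k+1}[A])$.
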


\begin{proof}
Easy.
\end{proof}

\medskip

\noindent {\bf Notation and comments}

\begin{enumerate}
  \item Denote by $\r$ the infinite sequence of reductions $\r_0, \r_1, ...,\r_k,
  ...$. Note that it is not the reduction of one unique term. $\r_0$  computes  the head normal
form $t_0$ of $(A \ V_0[A])$. The role of $\s_0$ is to substitute
in the result the substitution that changes $U_0$ into the first
part of $U_1$. Note that, by Lemma \ref{urs}, this first part may
also have been reduced but here we forget this reduction. Then we
use $\r_1$ to get the head normal form $t_1$ of $(\s_0(t_0) \
S_1[A])$ and keep going like that.
  \item Note that, by Lemma \ref{prem} and \ref{head}, $t_k$  is {\em some} head
normal form for $(A \ U_k[A])$ but it is not the {\em canonical}
one i.e. the one obtained by reducing, at each step, the head
redex.
\end{enumerate}

\begin{defi}
Say that $S_k$ comes in head position during $\rho$ if, for some
$k' > k$, an element of the list $\s_{k'k}(S_k)[A]$ comes in head
position during the head reduction of $(A \ V_{k'}[A])$.
\end{defi}

\begin{defi}\label{applied}
\begin{enumerate}
  \item Let $t$ be a term with a free variable $\n$.
  \begin{enumerate}
\item
 Say that  $\nu$
is never applied in a reduct of $t$ if no reduct $t'$ of $t$
contains a sub-term of the form $(\n \ u)$.
\item Say that $\n$ is persisting in $t$ if $\n \in \b\O(t)$ and
$\nu$ is never applied in any reduct of $t$.
\end{enumerate}
  \item We say that the term $F$ has the Barendregt's persistence property if we can find a term
$A'$ that has a free variable $\n$ that is persisting in $F[A']$.
\end{enumerate}

\end{defi}

\begin{coxa}\label{CX:coxa}\hfill
\begin{enumerate}
 \item The condition ``$\n$  is never applied'' in the
previous definition implies that, letting
$A_n=A'[\n=c_n]$, a reduct of $F[A_n]$ is, essentially, a reduct
of $F[A']$.

\item Here is an example.
Let $G$  be a $\l$-term such that $G \tr^* \l u \l v. (v \ (G \ (u
\ I) \ v))$ and $F = (G \ x)$. We can take  $\l z. (z^k \ (G \ (x
\ I^{\sim k}) \ z))$ for $F_k$. It follows easily that $F[I] \not
\simeq F[\Omega]$. We  have $U_k = I^{\sim k}$, $S_k = I$, $t_k =
I$ and $\s_k = id$.

Let $J$ be a $\l$-term such that $J \tr^* \l u \l v \l y. (v \ (J
\ u \ y))$ and $I' = (J \ \nu \ I)$.

If $F[I'] \tr^* t$, then, by Lemma  \ref{reduction}, $t \tr^* \l z.
(z^n \ (G \ I'' \ z))$ for some $n$ where $I''$ is a reduct of $(I'
\ I^{\sim n})$. It is easily checked that $\nu$ is persisting in
$F[I']$. Since no $c_n$ occurs as a sub-term of a reduct of
$F[I']$, it is not difficult to show that, if $n \neq m$, then
$F[I_n] \not \simeq F[I_m]$ where $I_n = I'[\nu := c_n]$ and thus
$\Im (\l x.F)$ is infinite.
\end{enumerate}
\end{coxa}

\newpage
\begin{thm}\label{cases}\hfill
\begin{enumerate}

\item Assume first that the length of the $U_k$ are bounded. Then,
$F$ has the Barendregt's persistence property.

\item Assume next that the length of the $U_k$ are not bounded and the
branch we have chosen  in ${\mathcal T}$  is {\bf recursive}.

\begin{enumerate}

\item If the set of those $k$ such that $S_k$ comes in head position during
$\rho$ is infinite, then $F$ has the Barendregt's persistence property.

\item Otherwise it is possible that $F$ does not have the Barendregt's 
persistence pro-perty.

\end{enumerate}

\end{enumerate}
\end{thm}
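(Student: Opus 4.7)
The plan splits along the three sub-cases. For case (1), where the lengths of the $U_k$ are bounded by some fixed $N$, the approach is to exploit this bound by constructing $A'$ as a $\lambda$-abstraction of arity at least $N$ that embeds $\nu$ in a protected position. The easy ``first argument'' takes $A'$ of the form $\lambda z_1 \ldots \lambda z_N. H[z_1, \ldots, z_N, \nu]$ with $H$ arranged so that any reduct of $A'$ applied to at most $N$ arguments retains $\nu$ as a free variable. Lemma~\ref{reduction} then ensures that every reduct of $F[A']$ has the shape $D[w'_i]$ with each $w'_i$ a reduct of $(x_{[i]} \ Arg(x_{[i]},G))[A']$; for the good occurrence $x_{(k)}$ the argument list has length at most $N$, so $\nu$ survives in $w'_{(k)}$. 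That argument does not yet rule out $\nu$ being \emph{applied} at non-good occurrences. The complete second argument therefore refines $A'$ by a fixed-point definition so that, once the $N$ outer abstractions of $A'$ are consumed, the residual is again an abstraction of the same shape with $\nu$ re-buried; I would then verify by induction on a standard reduction of $F[A']$ that $\nu$ can never reach an applicative position anywhere, not only on the good branch.

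For case (2.a), the plan rests on the recursiveness of the chosen branch together with the recursive enumeration $k \mapsto \#(F_k)$ of Lemma~\ref{Fk}. Using the Kleene recursion theorem I would construct $A'$ satisfying a fixed-point equation that, step by step, imitates $A$ along the reduction sequence $\rho$, and each time a (substituted) element of $S_k$ arrives in head position, arranges for that element itself to be a fresh $\nu$-tagged term in which $\nu$ appears only as an argument, never as a function. Because this happens for infinitely many $k$, $\nu$ is continually refreshed into head positions and cannot be swallowed. To promote persistence along $\rho$ into persistence in \emph{every} reduct of $F[A']$, the main tool is again Lemma~\ref{reduction}: any reduct $t$ of $F[A']$ can be written $D[w'_i]$, and confluence (Lemma~\ref{commutation}) lets one align an arbitrary reduction with $\rho$. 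The main obstacle is organising the recursive definition of $A'$ so that the promised $\nu$-carrying ingredients really do appear in head position and are not crushed by preceding head reductions; I expect Lemmas~\ref{head} and~\ref{prem}, combined with the hypothesis on the $S_k$, to be used repeatedly here.

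For case (2.b), the plan is to exhibit an explicit counter-example: a term $F$ for which no $A'$ exists. A natural candidate is a recursively defined $F$ such that $F \tr^* \lambda v. (x \ v \ (F' \ v))$ with $F'$ of a similar shape, so that the arguments of $x_{(k)}$ accumulate fresh bound variables $v$ that later return to head positions during head reduction. Any closed $A'$ with a free variable $\nu$ would either eventually see all its leading abstractions consumed by these $v$'s (exposing $\nu$ in an applicative context) or would erase $\nu$ altogether. The hardest step here is a universal quantification over $A'$: one must show that no clever closed term can keep $\nu$ both present and un-applied in every reduct. I expect to handle this by a case analysis on the arity and head normal form of $A'$, iterating Lemma~\ref{reduction} to track what happens to the leading $\lambda$-prefix of $A'$ as the $v$'s successively feed it.
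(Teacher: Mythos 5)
Your decomposition matches the paper's: an arity-padded $A'$ for the easy half of case (1), a fixed-point ``infinite $\eta$-expansion'' of $A$ for the full proof of (1) and, parameterized by a recursive function, for (2.a), and a counterexample for (2.b). The main gap is that you treat ``$\nu$ is never applied'' as the hard half of persistence, whereas it is the easy half: once $\nu$ only ever occurs inside subterms $(J\ \nu)$ with $J$ reducing to $\lambda y_1\ldots\lambda y_p.(u\ (J\ \nu\ y_1)\ldots(J\ \nu\ y_p))$, no reduct can ever apply $\nu$ (Lemma \ref{pas_applique}). What actually consumes all the machinery is the other half, $\nu\in\beta\Omega(F[A'])$: a reduct of $F[A']$ is a $\beta$-reduct followed by an $\Omega$-collapse of unsolvable subterms, and one must show that the subterm carrying $\nu$ is not swallowed by that collapse. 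This is where the good branch, the solvability clauses in the definition of ``good'', the sequences $V_k$, $U_k$, the head normal forms $t_k$ and the notion ``$\nu$ occurs correctly'' (i.e.\ $\nu$ sits in a trailing, non-erasable argument of the head normal form of $(A'\ V_k[A])$) come in; your proposal never mentions the $\Omega$-reduction. Relatedly, in (2.a) invoking the recursion theorem hides the actual difficulty: the number of fresh $\lambda$'s that the $n$-th stage $J_n$ must introduce depends on how many $\lambda$'s earlier stages have already injected into the head normal forms, which is only computable after those stages are fixed --- hence the paper's bootstrapping with approximants $\tilde J_n$ and an explicitly computed function $h$. That computation is precisely where recursiveness of the branch is used; your plan mentions it only in passing.

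For (2.b) you aim both too high and at the wrong example. The statement only requires one $F$ for which persistence fails for reasonable $A'$, and the paper uses Polonsky's examples, whose engine is erasure: the recursive term wraps the arguments fed back to $x$ in $K$'s, so whatever $\nu$-carrying material $A'$ supplies is eventually discarded and $\nu$ is erased rather than applied. Your candidate $F\tr^*\lambda v.(x\ v\ (F'\ v))$ has no such erasing mechanism. Moreover, ruling out every closed $A'$ is stronger than what is provable here: the paper only excludes $A'$ whose first B\"ohm-tree levels match those of $A$ up to $\eta$, and its Example \ref{E:two} explicitly exhibits a case-(2.b) term $F$ for which a differently shaped $A'$ (namely $\lambda z.(z\ \nu)$) does give persistence. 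The universal case analysis over all $A'$ that you propose is exactly the step the authors acknowledge they do not carry out.
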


\begin{proof}\hfill
\begin{enumerate}

\item It follows immediately from Lemma \ref{residu} that there are $ l,k_0 > 0$
such that for all $k \geq k_0, lg(U_k) = l$. The fact that $F$ has the
Barendregt's persistence property is proved in section 3.

\item

\begin{enumerate}

\item The fact that $F$ has the Barendregt's persistence property is proved in section
4.

\item There are actually two cases and the reasons why we cannot
find a term $A'$ are quite different.
\begin{enumerate}

\item For all $k$ there is $k' >  k$ such that $y_{k'} \in
dom(\s_{k'})$. The fact that the head variable of $t_k$ may change
infinitely often does not allow to use the technic of sections 3
or 4. A. Polonsky has given a term $F$ that corresponds to this
situation and such that a variable $\n$ can never be persisting in
a term $A'$ of the form $\l x_1 ... x_n. (x_i \ w_1 ... \ w_m)$.
See example 1 below.

\item For some $k_1$, $y_{k} \not \in dom(\s_{k})$ for all $k \geq k_1$.
Since there are infinitely many $k \geq k_1$ such that $S_k$ is
non empty this implies that, after some steps, $t_k$ does not
begin by $\l$. Thus, there is $k_2$ and $y$, such that, for all $k
\geq k_2$, $t_k = (y \ \overrightarrow{w_k})$. Using the technic
of sections 3 or 4 allows to put a term $J$ in front of some
(fixed) element of the sequence $\overrightarrow{w_k}$ but this is
not enough to keep $\n$. We adapt the example of A. Polonsky to
give a term $F$ that corresponds to this situation and such that a
variable $\n$ can never be persisting in a term $A'$ of the form
$\l x_1 ... \l x_n. (A \ w_1 ... \ w_n)$ where $w_j \simeq \l y_1
... \l y_{r_j}. (x_j \ w^j_1 ... w^j_{r_j})$. See example 2 below.\qedhere
\end{enumerate}
\end{enumerate}
\end{enumerate}
\end{proof}

\begin{coms}\label{CM:first}\hfill
\begin{enumerate}
  \item  For  case 1. we will  give two proofs. The first
  one
  is quite simple. The second one is much more elaborate and even though it,
  actually, does not work for all the possible situations, we give
  it because it is an introduction to the more complex
  section 4.
In the first proof,  we simply use
  the fact that the length of the $U_k$ are bounded to find a term $A'$ that has nothing to do with $A$.
  In the second proof and in section 4, the term $A'$ that we give
  has the Bargendregt's property and
  behaves like $A$ (using the idea of \cite{Bar08}) in the sense that it looks
 like an infinite $\eta$-expansion of $A$.

\item  When we say, in case 2.(b) of
the theorem,  that it is possible that $F$ does not have the
Barendregt's persistence property we are a bit cheating. We only show (except
in example 1) that there is no $A'$ with a persisting $\n$
satisfying an extra condition. This condition is that $A'$ looks
like $A$ i.e. the first levels of the B\"{o}hm tree of $A'$ must
be, up-to some $\eta$-equivalence, the same as the ones of $A$.

\item  It is known  that
there are recursive (by this we mean that we can compute their
levels) and infinite trees such that each level is finite and that have
no recursive infinite branch. We have not tried to transform such
a tree in a lambda term such that the corresponding ${\mathcal T}$ has
no branch that is good and recursive but we guess this is
possible.
\end{enumerate}
\end{coms}

\begin{exa}\label{E:one}
This example is due to A. Polonsky. Let $G,H$  be $\l$-terms such
that \\$G \tr^* \l y \l z. (z \ G \ \l u. (y \ (K \ u)) \ z)$ and $H
\tr^* \l u \l v \l w. (w \ (H \ u \ (v \ u) \ w))$. \\Let $F = (G \ \l y. (H \ y \ x))$. \\
We have $F \tr^* \l z. (z^n \ (G \  \l y\l w.
(w^m  \ (H \  (K^n \ y) \ (x \ (K^n \ y)^{\sim m}) \ w)) \ z))$. \\
Thus, if $B \simeq \l y_1 ... \l y_r. (y_i \ w_1 ... w_l)$ where
$\nu$ is possibly free in the $w_j$, \\
$F[B] \simeq \l z. (z^l \ (G
\ \l y\l w. (w^r  \ (H \ (K^l \ y)\ y \ w)) \ z))$ and  $\nu$ is
not persisting in $F[B]$ (since it can be erased). This means that
for all closed term $A$ such that $F[A] \not \simeq F[\Omega]$,
and for all solvable term $A'$, $\nu$ is not persisting in $F[A']$.
\end{exa}

\begin{exa}\label{E:two}
This example is an adaptation of the previous one. Let $G,H$  be
$\l$-terms such that $G \tr^* \l y \l z. (z \ (G \ \l u. (y \ (K \
u)) \ z))$ and $H \tr^* \l u \l v \l w. (w \ (H \ u \ (v \ u) \ w))$.\\
Let $F = \l y.(G \ \l v. (H \ v \ (x \ y)))$. \\ We have $F \tr^*
\l y\l z. (z^n \ (G \  \l v\l w. (w^m \ (H \ (K^n \ v) \ (x \ y \ (K^n \
v)^{\sim m}) \ w)) \ z))$ and it is clear that $F[I] \not \simeq
F[\Omega]$.
\\
Let $I'  \simeq \l x \l x_1 ... \l x_r. (x \ w_1 ... w_r)$ where,
for $1 \leq j \leq r$, $w_j \simeq \l y_1 ... \l y_{r_j} (x_j \
w^j_1 ... w^j_{r_j})$. \\
For $n \geq max_{1\leq j \leq r}(r_j)$,
$F[I'] \simeq \l y\l z. (z^n \ (G \ \l v\l w. (w^m \ (H \ (K^n \
v) \ (y \ w'_1 ... w'_r) \ w)) \ z))$ for some $w'_j$ where $\nu$
does not occur and thus $\nu$ is not persisting in $F[I']$.

Note, however,  that  $\nu$ is persisting in $F[I'']$ where $I'' =
\l z. (z \ \nu)$.
\end{exa}

\section{Case 1 of Theorem \ref{cases}}\label{cas1}

We assume in this section that we are in case 1. of Theorem
\ref{cases}.

\subsection{A simple argument}\hspace*{\fill} \\

\noindent Let $A' =  \l x_1...\l x_l \l z. (z \ \nu)$ where $l$ is a bound
for the length of the $U_k$. We show that $\n \in \b\O(F[A'])$.
 It is easy to show that $\n$ is never applied  in the
terms $(A' \ U_k[A'])$ but the fact that $\n$ is never applied in
a reduct of $F[A']$ is not so clear. Since, because of the next
section, we do not need this point we have not tried to check.

If  $F[A'] \tr^* H \tr_{\O}^* G$. By Lemma \ref{reduction}, $F
\tr^*F'=D[[]_i=w_i \ : \ i \in {\mathcal I}]$, $w_i=(x_{(i)} \ Arg(x_{(i)}, F'))$ and
$H=D[[]_i=w'_i \ : \ i \in {\mathcal  I}]$ where $w_i[A'] \tr^*w'_i$. Let $k$
be such that $F' \tr F_k$. Let $i_0 \in {\mathcal  I}$ be such that the
occurrence of $x_{(k)}$ in $F_k$ chosen by the good branch is a
residue of the occurrence of $x_{[{i_0}]}$ in $F'$. By Lemma
\ref{residu}, the length of $Arg(x_{[{i_0}]}, F')$ is bounded by
$l$ and thus $w'_{i_0} = \l x_q ... x_l\l z. (z \
 \nu)$ for some $q \leq l$.

It remains to show that the sub-term  $\l z. (z \
 \nu)$ of $w'_{i_0}$ cannot be erased in the $\O$-reduction from $H$ to $G$.
 Assume it is not the case. Then,
 there is a
sub-term $D'$ of $D$ containing the hole $[]_{i_0}$ such that
$D'[[]_i=w'_i \ : \ i \in {\mathcal  I}]$ is unsolvable. $D'[[]_i =
w_i]$ is solvable (since, otherwise, the occurrence $x_{[k]}$ will
be in an unsolvable sub-term of $F_k$ and this contradicts the
fact that  $x_{[k]}$ is  good). Since the reduction $D'[[]_i =
w_i] \tr^* D'[[]_i =  w'_i]$ only is inside the $w'_i$, since the
first term is solvable and the second one is not, then, by the
Church-Rosser property, the head variable of the head normal  form
of  $D'[[]_i = w_i]$ is an occurrence of $x$. By Lemma
\ref{ajout},
 $x_{[i_0]}$ is  pure in $F'$.
$x_{[i_0]}$ is not the head  variable of the head normal form of
$D'[[]_i = w_i]$  (since, otherwise, by the  Church-Rosser
property, $D'[[]_i = w'_i]$ would be solvable). Contradiction.

\subsection{The proof}\label{4.2}\hspace*{\fill} \\

\noindent There are actually different situations.
\begin{enumerate}

\item Either, for some $ k_1 \geq k_0$,  $y_k \in \overrightarrow{z_k}$ for any $k \geq
k_1$. Since for $k \geq k_1$, $S_k$ is empty and the head variable
of $t_k$ is not  substituted,
 there is $\overrightarrow{z}$ and $z \in \overrightarrow{z}$,
such that, for $k \geq k_1$, $t_k = \l \overrightarrow{z}. \ (z \
\overrightarrow{w_k})$.

\item Or $y_k \not \in \overrightarrow{z_k}$ for all $k \geq
k_0$ and

\begin{enumerate}

\item Either the situation is unstable (i.e.  the set of those $k$  such that $y_{k}
\in dom(\s_{k})$ is infinite).

\item Or, there is
$\overrightarrow{z}$, $y \not \in \overrightarrow{z}$ and $k_1
\geq k_0$, such that, for $k \geq k_1$, $t_k = \l
\overrightarrow{z}. \ (y \ \overrightarrow{w_k})$.

\end{enumerate}

\end{enumerate}

\noindent We assume  that we are in situation 1. or 2.(b) which may be
synthesized by: there exists $k_0$ and some fixed variable $y$
(that may be in $\overrightarrow{z}$ or not), such that, for all
$k \geq k_0$, $t_k=\l \overrightarrow{z}. \ (y \
\overrightarrow{w_k})$ for some $\overrightarrow{w_k}$.

We fix $p \geq
lg(\overrightarrow{z})+ l + 2$ where $l$ is a bound for  the
length of the $U_k$. Let $A' = (J \ \n \ A)$ where $J$ is a new
constant
 with the following reduction rule
\[(J \ \n \ u) \tr \l y_1 ... \l y_p. (u \ (J \ \n \
y_1) ... (J \ \n \ y_p))\]
We will prove that $\n$ is persisting in $F[A']$.

\medskip


The term   $A'$ is not a pure $\l$-term since the constant $J$
occurs in it. We could, of course, replace this constant by  a
$\l$-term $J'$ that has the same behavior, e.g. \\$(Y \; \l k \l y_1
... \l y_p. (u \ (k \ \n \ y_1) ... (k \ \n \ y_p)))$ where $Y$ is
the Turing fixed point operator. But such a term introduces some
problems in Lemma \ref{commut-2} because $J'$ and $(J' \ \n)$
contain redexes and can be reduced. With such a term $J'$, though
intuitively true, this lemma (as it is stated) does not remain
correct.

Making this lemma correct (with $J'$  instead of a constant) will
require the treatment of  redexes  inside $J'$ and $(J' \ \n)$.
The reader should be convinced that this can be done but, since it
would need tedious  definitions, we will not do it.

\medskip

Note that, in  situation 2.(a),  we cannot do the kind of proof
given below.  Here is an example. Let $G$  be a $\l$-term such
that $G \tr^* \l u \l v. (v \ (G \ \l y. (u \ (K \ y)) \ v))$ and
$F = (G \ x)$. We can take $\l z. (z^k \ (G \ \l y. (x \ (K^k \
y)) \ z)$ for $F_k$. We thus have $U_k = (K^k \ y)$, $t_k = \l
z_1...\l z_k.\ y$ and $\s_k = [y := (K \ y)]$. It is clear that
$F[I] \not \simeq F[\Omega]$.
But $F[I'] \tr^*  \l z. (z^k
\ (G \ I \ z))$ for any $I' \simeq \l y \l y_1 ... \l y_k. \ (y \
w_1 ... w_k)$
where $\nu$ is possibly free in the $w_i$. Thus
$\nu$ is not persisting in $F[I']$.

\subsubsection{Some preliminary definitions and results}\hspace*{\fill} \\

\noindent When, in a term $t$, we replace some sub-term $u$ by $(J \ \nu \
u)$ to get $t'$, the reducts $u$ (resp. $u'$), of $t$ (resp. of
$t'$) are very similar. The goal of this section is to make this a
bit precise.

\begin{defi}\hfill
\begin{enumerate}
  \item We define, for terms $u$, the sets $E_u$ of terms by the following
grammar:\\ $E_u = u \ | (J \ \nu \ e_u) \ | \l \overrightarrow{y}.
(e_u \ \overrightarrow{e_y})$
  \item Let  $t,t'$ be some terms. We denote by $t \leadsto t'$
  if there is a context $C$ with one hole such that $t=C[u]$ and
  $t'=C[u']$ where $u' \in E_u$.
\end{enumerate}
\end{defi}

\begin{noco}\label{NC:noco}\hfill
\begin{enumerate}
\item Note that, in the previous definition as well as in the sequel,
$e_u$ always denotes a term in $E_u$
and, for a sequence $\overrightarrow{y}$ of variables ,
$\overrightarrow{e_y}$ always denote a sequence of terms
$\overrightarrow{v}$ (of the same length as $\overrightarrow{y}$)
such that for each variable $y$ in $\overrightarrow{y}$, the
corresponding term in $\overrightarrow{v}$ is a member of $E_y$.
Also note that,  in the previous definition,  for a term $\l
\overrightarrow{y}. (e_u \ \overrightarrow{e_y})$ to be in $E_u$,
we assume that the variables in $\overrightarrow{y}$ do not occur
in $e_u$.
 \item  $t \leadsto t'$ means that $t'$ is obtained from $t$ by
replacing some sub-term $u$ of $t$ by $(J \ \n \ u)$ or by
reducing  redexes introduced by $J$ i.e.  the one coming from its
reduction rule  $(J \ \n \ u) \tr \l y_1 ... \l y_p. (u \ (J \ \n
\ y_1) ... (J \ \n \ y_p))$  and those whose $\l$'s are among $\l
y_1 ... \l y_p$.
\end{enumerate}
\end{noco}

\begin{lem}\label{utile}\hfill
\begin{enumerate}
\item If $t \in E_z$ and $t' \in E_u$ then $t[z:=t'] \in E_u$.
\item If  $a  \leadsto a'$ and $b  \leadsto b'$, then $a[x:=b] \leadsto^* a'[x:=b']$.
\item If $u \leadsto^* u'$ and if $v'$ is an $\O$-redex in $u'$, then $v \leadsto^* v'$
for some $\O$-redex  $v$ in $u$.
\item If $t \in E_u$ then $t \tr^* \l \overrightarrow{y} (u \
\overrightarrow{e_y})$ for some $\overrightarrow{e_y}$.

\end{enumerate}
\end{lem}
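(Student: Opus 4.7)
The plan is to prove the four items in the order (1), (4), (2), (3), reflecting the dependencies that (4) and (2) need (1), and (3) needs (4). For (1), proceed by structural induction on the derivation $t\in E_z$: the base $t=z$ is immediate; when $t=(J \ \nu \ e_z)$, the IH on $e_z$ yields $e_z[z:=t']\in E_u$ and the grammar closes under $(J \ \nu \ \cdot)$; when $t=\l\overrightarrow{y}.(e_z \ \overrightarrow{e_y})$, the IH gives $e_z[z:=t']\in E_u$, and I check each $e_y[z:=t']\in E_y$ by a routine auxiliary induction on $e_y\in E_y$ whose only nontrivial observation is that the base $e_y=y$ is preserved since $z\neq y$.

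For (4), induct on $t\in E_u$. The base $t=u$ uses the empty prefix. When $t=(J \ \nu \ e_u)$, applying the reduction rule for $J$ produces $\l y_1\ldots y_p.(e_u \ (J \ \nu \ y_1)\ldots(J \ \nu \ y_p))$; the IH yields $e_u \tr^* \l\overrightarrow{z}.(u \ \overrightarrow{e_z})$, after which $\beta$-contracting the application $(\l\overrightarrow{z}.(u \ \overrightarrow{e_z}) \ (J \ \nu \ y_1)\ldots(J \ \nu \ y_p))$ produces a term of the required shape $\l\overrightarrow{y}'.(u \ \overrightarrow{e_{y'}})$, where (1) is invoked to confirm that each argument remains in the appropriate $E_{y'_i}$ class after substituting the $(J \ \nu \ y_j)$ for bound variables in $\overrightarrow{e_z}$. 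The case $t=\l\overrightarrow{y}.(e_u \ \overrightarrow{e_y})$ is the analogous $\beta$-bookkeeping.

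For (2), write $a=C[u]$, $a'=C[u']$ with $u'\in E_u$, and $b=D[v]$, $b'=D[v']$ with $v'\in E_v$. A short induction on $u'\in E_u$ (a parametrised variant of (1)) gives $u'[x:=b]\in E_{u[x:=b]}$, so $a[x:=b]\leadsto a'[x:=b]$ in a single step. Then each free occurrence of $x$ in $a'$ has become a copy of $b$ in $a'[x:=b]$, and replacing these copies of $b$ one at a time by $b'$ gives a chain of single $\leadsto$ steps (each justified by the context $D$ together with $v'\in E_v$) ending at $a'[x:=b']$.

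For (3), induct on the length of $u\leadsto^* u'$ to reduce to the one-step case $u=C[w]\leadsto C[w']=u'$, $w'\in E_w$. I then case-split on where the $\Omega$-redex $v'$ of $u'$ sits: (i) disjoint from the hole, take $v:=v'$; (ii) the hole lies non-strictly inside $v'$, so the corresponding sub-term $v$ of $u$ satisfies $v\leadsto v'$; (iii) $v'$ lies strictly inside $w'$, which I handle by a further structural induction on $w'\in E_w$. The key invariant, derivable from (4), is that for $x'\in E_x$, $x'$ is unsolvable iff $x$ is, so that an unsolvable $v'$ in case (ii) forces $v$ to be unsolvable. The main obstacle is case (iii): one must observe that every sub-term of an $e_y\in E_y$ is solvable (since the only base of $E_y$ is the variable $y$), which forces the only unsolvable sub-terms of $w'$ to descend through the $e_w$ branch and thereby admit ancestors in $w$.
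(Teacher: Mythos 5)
Your items (1), (2) and (4) are correct and follow the only natural route, which is also the paper's: the paper disposes of (1)--(3) as ``immediate'' and proves (4) exactly as you do, by induction on the derivation of $t\in E_u$. Your elaboration of (4) is right, including the point that one must invoke (1) to see that after $\b$-contracting $(\l \overrightarrow{z}.(u\ \overrightarrow{e_z})\ (J\ \n\ y_1)\ldots(J\ \n\ y_p))$ the arguments land back in the appropriate classes $E_{y_i}$; and your two-phase argument for (2) (first $a[x:=b]\leadsto a'[x:=b]$ via the closure of $E_{u}$ under the substitution $[x:=b]$, then replacing the copies of $b$ by $b'$ one at a time) is sound.

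The soft spot is item (3), case (ii). The invariant you cite --- for $x'\in E_x$, $x'$ is unsolvable iff $x$ is --- only covers the situation where the $\Omega$-redex $v'$ is exactly the replaced sub-term $w'$. In the general sub-case of (ii), $v'=D[w']$ for a proper context $D$ with $w'\in E_w$, and the candidate ancestor is $v=D[w]$; to conclude that $v$ is unsolvable you need the context-closed statement ``$D[w]$ is solvable iff $D[w']$ is solvable'', which does not follow from (4) applied to $w'$ alone. (After reducing $w'$ to $\l\overrightarrow{y}.(w\ \overrightarrow{e_y})$ inside $D$, the extra abstractions absorb whatever arguments $w$ would have received during the head reduction of $D[w]$, turning them into terms of the form $e_{y_i}[y_i:=\mathrm{arg}_i]$; one must then iterate, simulating the head reduction of $D[w]$ step by step using (1) and (4).) This is true and repairable --- it is essentially the germ of Lemmas \ref{commut-1} and \ref{commut-11}, which the paper only proves afterwards --- but as written your justification skips it; since the paper itself labels (3) ``immediate'', this is a gap in the source as much as in your proposal, and it is the one place where a referee should ask for the missing induction on the head reduction of $D[w]$.
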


\begin{proof}
1, 2 and 3 are immediate. 4 is proved by induction on the number
of rules used to show $t \in E_u$.
\end{proof}

\begin{lem}\label{commut-1}\hfill
\begin{enumerate}
\item Assume $u =C[(\l x. a \ b)]$ for some context $C$ and let $u
\leadsto^*   u'$. Then $u'=C'[(a' \ b')]$ for some context
$C'$ and some terms $a',b'$ such that $C \leadsto^* C'$,  $a' \in   E_{\l x.a''}$,
$a   \leadsto^*   a''$ and  $b \leadsto^* b'$.
\item Assume $u \leadsto^* u'$ and $u \tr^* v$. Then,  $v
\leadsto^* v'$ for some $v'$ such that $u' \tr^* v'$.
\end{enumerate}
\end{lem}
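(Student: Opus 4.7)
I plan to prove the two parts in sequence, each by induction on the length of the respective reduction, treating a single step at a time.

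For Part 1, the base case $u=u'$ is immediate, taking $a''=a$, $a'=\l x.a$ (which lies in $E_{\l x.a}$ by the first grammar clause), $b'=b$ and $C'=C$. For the inductive step, suppose $u\leadsto^* u_n = C_n[(a_n'\ b_n)]$ already satisfies the conclusion, and consider one more step $u_n\leadsto u_{n+1}$ replacing some sub-term $w$ of $u_n$ by $w'\in E_w$. The proof then proceeds by case analysis on the position of $w$ relative to the distinguished occurrence of $(a_n'\ b_n)$: (a) $w$ is disjoint from it, and the step is absorbed into $C_n\leadsto C_{n+1}$; (b) $w$ lies strictly inside $a_n'$ or $b_n$, which needs a short sub-lemma saying that if $t\in E_v$ and $t\leadsto t'$ then $t'\in E_{v'}$ for some $v'$ with $v\leadsto^* v'$ (itself proved by induction on the derivation of $t\in E_v$, splitting on the three grammar clauses); (c) $w$ properly contains $(a_n'\ b_n)$, where the grammar forces $w'$ to have the shape $\Phi[w]$ with $\Phi$ an $E$-context built from $(J\ \n\ \cdot)$ and $\l\ov{y}.(\cdot\ \ov{e_y})$ layers, so these layers can be absorbed into the ambient context to form $C_{n+1}$, with $(a_n'\ b_n)$ preserved at the leaf.

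For Part 2 I induct on the length of $u\tr^* v$. The non-trivial step is a single $\b$-step $u\tr v$ contracting a redex $(\l x.a\ b)$ inside $u=C[(\l x.a\ b)]$. Applying Part 1 to that redex yields $u'=C'[(a'\ b')]$ with $a'\in E_{\l x.a''}$, $a\leadsto^* a''$, $b\leadsto^* b'$ and $C\leadsto^* C'$. By Lemma~\ref{utile}(4), $a'\tr^*\l y_1\cdots\l y_m.((\l x.a'')\ e_1\cdots e_m)$ with each $e_i\in E_{y_i}$. If $m=0$, reducing first the $a'$-component and then the exposed $(\l x.a''\ b')$-redex gives $u'\tr^* C'[a''[x:=b']]$, while Lemma~\ref{utile}(2) together with $C\leadsto^* C'$ yields $v=C[a[x:=b]]\leadsto^* C'[a''[x:=b']]$, so take $v'=C'[a''[x:=b']]$. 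If $m\geq 1$, further $\b$-reductions (the outer $\l y_1$-redex absorbing $b'$, then the newly exposed $(\l x.a'')$-redex) reach $v'=C'[\l y_2\cdots\l y_m.(a''[x:=e_1[y_1:=b']]\ e_2[y_1:=b']\cdots e_m[y_1:=b'])]$; on the $\leadsto^*$-side, after arriving at $C'[a''[x:=b']]$, Lemma~\ref{utile}(1) gives $e_1[y_1:=b']\in E_{b'}$, so I replace each substituted $b'$ in $a''$ by $e_1[y_1:=b']$ one $\leadsto$-step per occurrence, and finally apply one further $\leadsto$-step using the third grammar clause to wrap the result in the residual $\l y_2\cdots\l y_m$-binders and arguments $e_i[y_1:=b']$, landing at the same $v'$.

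The main obstacle I expect is this $m\geq 1$ sub-case together with case (c) of Part 1: both rely crucially on the third grammar clause $E_u\ni\l\ov{y}.(e_u\ \ov{e_y})$ doing exactly the right bookkeeping, so that the residual $\l y_i$-binders and extra arguments created by contracting an $E_{\l x.a''}$-wrapped redex on the $\tr$-side can be re-absorbed on the $\leadsto^*$-side. Making this entirely rigorous also requires a mild generalisation of Lemma~\ref{utile}(1) that allows substitution of non-root variables inside elements of $E_v$ (used for the $e_i[y_1:=b']$ with $i\geq 2$); this is a routine induction on the grammar of $E_v$ but must be recorded explicitly before the main argument closes.
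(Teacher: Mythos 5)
Your proposal follows essentially the same route as the paper: for part~1 the paper likewise argues by locating the $\leadsto$-operation relative to the occurrences of $C$, $a$, $b$ and $\l x.a$ (your cases (a)--(c)), and for part~2 it likewise reduces to a single $\b$-step and combines part~1 with Lemma~\ref{utile}. Your write-up is simply a more explicit elaboration --- in particular the $m\geq 1$ sub-case, handled via Lemma~\ref{utile}(1)--(2) and the third grammar clause, is exactly the ``commutation of local and global operations on $a$'' that the paper leaves implicit --- so there is nothing to object to.
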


\begin{proof}
The first point is immediate because the operations that are done to go from $u$ to $u'$ are either in
$C$ (to get $C'$) or in $b$ (to get $b'$) or  in $a$ (to get $a'$)
or in $\l x.a'$. It is enough to check that locals and globals
operations on $a$ commute.

For the second point, we do the proof for one step of reduction $u \tr v$ and we use the first point and Lemma
\ref{utile}.
\end{proof}

\begin{lem}\label{commut-11}\hfill
\begin{enumerate}
\item If $\overrightarrow{\l x}. (x \ \overrightarrow{c}) \leadsto^* u$,
then $u \tr^* \overrightarrow{\l x} \overrightarrow{\l y}.(x \
\overrightarrow{c'} \ \overrightarrow{e_y})$ for some
$\overrightarrow{c}\leadsto^* \overrightarrow{c'}$ and some
$\overrightarrow{e_y}$.
\item If $u \tr^* \overrightarrow{\l x}. (x \ \overrightarrow{c})$ and $u \leadsto^* u'$,
then $u' \tr^* \overrightarrow{\l x} \overrightarrow{\l y}.(x \
\overrightarrow{c'} \ \overrightarrow{e_y})$ for some
$\overrightarrow{c}\leadsto^* \overrightarrow{c'}$ and some
$\overrightarrow{e_y}$.
\end{enumerate}
\end{lem}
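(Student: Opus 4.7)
Plan: Part (2) is a direct corollary of part (1) combined with Lemma \ref{commut-1}(2). Given $u \tr^* \overrightarrow{\l x}.(x \ \overrightarrow{c})$ and $u \leadsto^* u'$, Lemma \ref{commut-1}(2) produces a term $v'$ with $\overrightarrow{\l x}.(x \ \overrightarrow{c}) \leadsto^* v'$ and $u' \tr^* v'$; applying part (1) to $v'$ yields $v' \tr^* \overrightarrow{\l x}\overrightarrow{\l y}.(x \ \overrightarrow{c'} \ \overrightarrow{e_y})$ with $\overrightarrow{c} \leadsto^* \overrightarrow{c'}$, and concatenating the two reductions $u' \tr^* v' \tr^* \overrightarrow{\l x}\overrightarrow{\l y}.(x \ \overrightarrow{c'} \ \overrightarrow{e_y})$ finishes the proof.

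For part (1), I would induct on the length of the chain $t_0 := \overrightarrow{\l x}.(x \ \overrightarrow{c}) \leadsto^* u$. The base case is trivial, with $\overrightarrow{c'} := \overrightarrow{c}$ and empty $\overrightarrow{y}$. For the inductive step, factor the chain as $t_0 \leadsto^* u' \leadsto u$; the induction hypothesis gives $u' \tr^* T$ with $T = \overrightarrow{\l x}\overrightarrow{\l y}.(x \ \overrightarrow{c'} \ \overrightarrow{e_y})$ and $\overrightarrow{c} \leadsto^* \overrightarrow{c'}$. Lemma \ref{commut-1}(2) applied to $u' \leadsto u$ and $u' \tr^* T$ yields $v'$ with $T \leadsto^* v'$ and $u \tr^* v'$, so the remaining task is to analyse $T \leadsto^* v'$. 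For this I would strengthen the induction hypothesis so it applies to any starting term of the shape $\overrightarrow{\l z}.(x \ \overrightarrow{d})$ (noting that $T$ does have this shape, with $\overrightarrow{z} := \overrightarrow{x} \cdot \overrightarrow{y}$ and $\overrightarrow{d} := \overrightarrow{c'} \cdot \overrightarrow{e_y}$) and perform induction on the total number of $\leadsto$-steps. The single-step analysis proceeds by cases on which subterm position is modified; in each case Lemma \ref{utile}(4) lets us $\b$-reduce the inserted $E_s$-wrapping to its canonical form $\l \overrightarrow{y'}.(s \ \overrightarrow{e_{y'}})$, keeping the head variable $x$ exposed. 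Splitting the resulting argument sequence into a prefix (a further $\leadsto^*$-refinement of $\overrightarrow{c'}$, hence of $\overrightarrow{c}$ by transitivity) and a suffix of new $E_y$-arguments gives the target form.

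The main obstacle is precisely this bookkeeping in part (1). The relation $\leadsto^*$ can act at any subterm position and interleave $J$-wrappings with $\eta$-style expansions, so the shape of $u$ after many steps is combinatorially rich, and each application of Lemma \ref{commut-1}(2) may expand one $\leadsto$-step into several, which one must absorb into the step-count. The saving feature is that $\leadsto$ never duplicates or erases the head variable $x$: every $E_s$-wrapping of a subterm containing $x$ reduces by $\tr^*$ to a term that still exposes $x$ at the head once the extra lambdas are collected and the extra $E_y$-arguments are acknowledged. Making this observation rigorous via Lemmas \ref{utile}(4) and \ref{commut-1}(2) is what drives the proof.
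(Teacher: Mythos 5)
Your treatment of part (2) is exactly the paper's: reduce to part (1) via Lemma \ref{commut-1}(2). The problem is part (1), where your induction scheme is not well-founded. You induct on the number of $\leadsto$-steps, peel off the last step $u' \leadsto u$, apply the induction hypothesis to get $u' \tr^* T$ with $T$ of the desired shape, and then invoke Lemma \ref{commut-1}(2) to transport the last step across the reduction, obtaining $T \leadsto^* v'$ and $u \tr^* v'$. But Lemma \ref{commut-1}(2) gives no bound on the number of $\leadsto$-steps in $T \leadsto^* v'$ in terms of the single step $u' \leadsto u$ (and indeed it can blow up: the step may occur inside a subterm that the reduction $u' \tr^* T$ duplicates many times, or the transported modification must be propagated through several positions). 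So the residual problem $T \leadsto^* v'$ is an instance of the \emph{general} statement you are trying to prove, with an uncontrolled step count, and your induction does not terminate. You flag this yourself (``which one must absorb into the step-count'') but offer no mechanism for the absorption, and I do not see one within your scheme; this is the missing idea, not a detail.

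The paper avoids the interleaving entirely. Its key observation is a purely structural decomposition of $\leadsto^*$ applied to a head normal form: if $\l x_1 \l x_2. (x \ c_1 \ c_2) \leadsto^* u$, then $u \in E_{\l x_1 u_1}$, $u_1 \in E_{\l x_2 u_2}$, $u_2 \in E_{(v_1 \ c'_2)}$, $v_1 \in E_{(v_2 \ c'_1)}$, $v_2 \in E_x$, with $c_i \leadsto^* c'_i$ --- that is, the whole chain of $\leadsto$-steps is read back as nested $E$-memberships mirroring the syntax tree of the starting term, with all genuine rewriting pushed into the arguments $c_i$. Only \emph{after} this decomposition is established does one $\b$-reduce, from the inside out, using Lemma \ref{utile}(4) to collapse each $E$-layer to $\l \overrightarrow{y}.(s \ \overrightarrow{e_y})$ and Lemma \ref{utile}(1,2) to propagate through the substitutions; this is what exposes the head variable $x$ and produces the trailing $\overrightarrow{e_y}$. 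To repair your proof you should prove this decomposition first (it \emph{is} amenable to induction on the number of $\leadsto$-steps, because a single $\leadsto$-step preserves the nested $E$-structure --- each step either wraps a subterm or acts inside an existing component, and the grammar $E$ is closed under both), and only then perform the reductions.
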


\begin{proof}\hfill
\begin{enumerate}
  \item We do the proof on an
example. It is clear that this is quite general. Assume $\l x_1 \l
x_2 (x \ c_1 \ c_2) \leadsto^* u$, then $u \in E_{\l x_1 u_1}$,
$u_1 \in E_{\l x_2 u_2}$, $u_2 \in E_{(v_1 \ c'_2)}$, $v_1 \in
E_{(v_2 \ c'_1)}$, $v_2 \in E_{x}$, $c_1 \leadsto^* c'_1$ and $c_2
\leadsto^* c'_2$. The $\b$-reduction  is done starting from inside
and  the propagation is done by using Lemma \ref{utile}.
  \item It is a consequence of the first point and Lemma \ref{commut-1}.\qedhere
\end{enumerate}
\end{proof}

\noindent The next lemma means that, when a redex appears in some $u'$ where
$u \ \leadsto^* \ u'$, it can either come from the corresponding
redex in $u$, or has been created by the transformation of an
application in $u$ that was not already a redex or comes from the
replacement of  some sub-term $u$ by, essentially, $(J \ \nu \
u)$.
\begin{lem}\label{commut-2}\hfill
\begin{enumerate}
\item Assume $u' =C'[R']$ for some context $C'$ and some redex $R'$ and let $u
\ \leadsto^*  \ u'$. Then :
\begin{itemize}
\item either $R' = (\l x. a' \ b')$, $u=C[(\l x. a \ b)]$ for some context
$C$ and some terms $a,b$ such that $C \leadsto^*  C'$,  $a \leadsto^* a'$ and
$b \leadsto^* b'$.
\item or $R' = (a' \ b')$, $u=C[(a \ b)]$ for some context
$C$ and some terms $a,b$ such that $C \leadsto^*  C'$,  $a' = \l \overrightarrow{y}.
(a'' \ \overrightarrow{e_y})$,  $a \leadsto^* a''$ and
$b \leadsto^* b'$.
\item or $R' = (J \ \nu \ a')$,  $u=C[a]$ for some context
$C$ and some term $a$ such that $C \leadsto^*  C'$ and $a \leadsto^* a'$.
\end{itemize}
\item Assume $u \leadsto^* u'$ and $u' \tr^* v'$. Then  $v \leadsto^* v'$
for some $v$ such that $u \tr^* v$.
\end{enumerate}
\end{lem}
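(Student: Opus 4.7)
My plan is to prove (1) by induction on the length of the chain $u \leadsto^* u'$, then derive (2) from (1) by induction on the length of $u' \tr^* v'$, in both cases reducing to the single-step versions.

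For (1), in the base case $u = u'$ the redex $R'$ already sits in $u$, and I take case one if $R' = (\l x.a'\,b')$ is a $\b$-redex and case three if $R' = (J\,\n\,a')$ is a $J$-redex, with all the $\leadsto^*$-chains trivial. In the inductive step I write $u \leadsto u_0 \leadsto^* u'$, apply the induction hypothesis to $u_0 \leadsto^* u'$ to locate the origin of $R'$ in $u_0$ according to one of the three templates, and then trace back the first step $u \leadsto u_0$, which replaces some sub-term $w$ of $u$ by some $w' \in E_w$. I split on whether the origin site in $u_0$ lies outside, strictly inside, or coincides with $w'$: the first two sub-cases preserve the same case by extending the appropriate $\leadsto^*$-chain by one step, while the coinciding sub-case either preserves the case (if $w' = w$) or introduces a fresh case two (if $w$ sits in an applied position and $w'$ has the shape $\l \ov{y}.(e_w\,\ov{e_y})$) or case three (if $w'$ has the shape $(J\,\n\,e_w)$).

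For (2), it is enough to treat a single step $u' \tr v'$ reducing a redex $R'$ and then chain. I apply (1) to $R'$. In case one I reduce the corresponding $\b$-redex in $u$ to obtain $v = C[a[x:=b]]$; then $v \leadsto^* v'$ follows from $C \leadsto^* C'$ together with Lemma \ref{utile}(2) applied to $a \leadsto^* a'$ and $b \leadsto^* b'$. In case three I take $v = u$ and observe that the reduced form $\l y_1\ldots y_p.(a'\,(J\,\n\,y_1)\ldots(J\,\n\,y_p))$ at the hole already belongs to $E_a$ by the third grammar rule (with $e_a = a'$, reachable from $a$ via $a \leadsto^* a'$, and each $(J\,\n\,y_j) \in E_{y_j}$), hence is $\leadsto^*$-reachable from $a$. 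In case two I again take $v = u$ and reach the reduced form $\l y_2\ldots y_k.(a''\,e_{y_1}[y_1:=b']\,e_{y_2}\ldots e_{y_k})$ from $(a\,b)$ by first applying the third grammar rule of $E_{(a\,b)}$ to wrap $(a\,b)$ into $\l y_2\ldots y_k.((a\,b)\,e_{y_2}\ldots e_{y_k})$, then internally $\leadsto^*$-transforming $a$ into $a''$ and $b$ into $e_{y_1}[y_1:=b']$, where the latter combines $b \leadsto^* b'$ with Lemma \ref{utile}(1), since $e_{y_1} \in E_{y_1}$ gives $e_{y_1}[y_1:=b'] \in E_{b'}$.

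The main obstacle is the case analysis in (1): one must enumerate carefully how a single prior $\leadsto$-step can interact with the origin site of the redex, including the creation of a fresh $\b$- or $J$-redex head from a subterm that previously carried neither. The other delicate point is case two of (2), where the reduced form is not naturally an element of $E_{(a\,b)}$ in the strict grammar sense but becomes $\leadsto^*$-reachable once one exploits the third grammar rule at the application-context level of $(a\,b)$ itself. Once these two points are settled, everything else follows mechanically from Lemmas \ref{utile} and \ref{commut-1}.
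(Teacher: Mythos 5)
Your proposal is correct and follows essentially the same route as the paper, which disposes of part (1) with the single observation that $R'$ is either the residue of a redex of $u$ or was created by the operations of the grammar $E$, and of part (2) by reducing to one reduction step and invoking part (1) — exactly your structure, with your induction on the $\leadsto^*$-chain and the explicit $\leadsto^*$-reachability of the contracta (via Lemma \ref{utile}) supplying details the paper leaves implicit. One small labelling slip: the configuration that first creates a case two — the replaced subterm $w'$ being the \emph{function part} of an application, hence strictly inside the origin redex rather than coinciding with it — is a fourth positional relationship your outside/strictly-inside/coincides split does not literally name, although the analysis you give for it (lambdas introduced by the rule $\l \overrightarrow{y}.(e_w\ \overrightarrow{e_y})$ over a term in applied position) is the right one.
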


\begin{proof}
For the first point, there are two cases. Either the redex $R'$ is the residue of a
redex in $u$ or it has been created by the operations from the
grammar $E$.

For the second point, it is enough
to prove the result for one step of reduction $u' \tr v'$. Use the
first point.
\end{proof}

\begin{defi}
Let $t$ be a solvable term. We say that:
\begin{enumerate}
  \item  $\n$ occurs nicely in
$t$ if the only occurrences of $\n$ are in a sub-term of the form
  $(J \ \n)$.
  \item $\n$ occurs correctly in
$t$ if it occurs nicely in $t$ and the head normal form of $t$
looks like $\overrightarrow{\l x}. (x \ \overrightarrow{c} \
\overrightarrow{e_y})$ for some final subsequence
$\overrightarrow{y}$ of  $\overrightarrow{x} $ of length at least
1 such that $\n$ does occur in $\overrightarrow{e_y}$.
\end{enumerate}

\end{defi}

\begin{lem}\label{reste}
Let $t$ be a solvable term. Assume that $\n$ occurs nicely (resp.
correctly) in $t$. Then $\n$ occurs nicely (resp. correctly)  in
every reduct of $t$.
\end{lem}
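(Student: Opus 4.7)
I would induct on the length of the reduction $t \tr^* t'$, so that it suffices to treat a single step $t \tr t'$. Each step is either a $\b$-step or an instance of the $J$-rule $(J \ \n \ u) \tr \l y_1 \ldots \l y_p. (u \ (J \ \n \ y_1) \ldots (J \ \n \ y_p))$. For the nicely case, in a $\b$-step $(\l x. a \ b) \tr a[x := b]$ performed inside $t$, every $(J \ \n)$-guarded $\n$-occurrence in $a$ or $b$ is transported unchanged into the reduct (since $\n$ is a free variable, distinct from $x$), and no new $\n$-occurrence can appear; in a $J$-step, the only new $\n$-occurrences on the right-hand side are of the form $(J \ \n \ y_i)$, which are again guarded. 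Hence the nicely property is preserved under one reduction step, and by induction under $t \tr^* t'$.

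For the correctly case, nicely is already in hand. The outer shape of the head normal form, namely the number of leading abstractions, the position of the head variable, and the number of applicative arguments, is a standard invariant of $\b$-reduction on solvable terms, so the head normal form of $t'$ has the same outer shape $\overrightarrow{\l x}. (x \ \overrightarrow{c'} \ \overrightarrow{e'_y})$ as that of $t$. The remaining task is to check that the final $k$ argument positions are still in $E_{y_j}$ and still contain $\n$. My plan is to invoke the commutation Lemmas \ref{commut-1}, \ref{commut-11}, and \ref{commut-2} to relate these positions in the two head normal forms: these lemmas are designed precisely to describe how $\b$-reduction interacts with the grammar of $E_u$, and combined with a Church--Rosser argument applied to the two head normal forms they yield that the argument positions in the reduct remain within the grammar. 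For the presence of $\n$, observe that every $\n$-occurrence inside $\overrightarrow{e_y}$ sits under a $J$-guard, and neither the $\b$-rule nor the $J$-rule can erase a guarded sub-term of an $E_{y_j}$-position: the $J$-rule duplicates rather than erases its second argument, and any $\b$-reduction that would erase a guarded occurrence would act above the $E_{y_j}$-level, which is ruled out by grammar closure.

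The main obstacle is making precise the closure of $E_{y_j}$ under internal reductions and then transferring the $E$-structure from the head normal form of $t$ to that of $t'$, since the two are only indirectly related through the Church--Rosser property rather than by a direct reduction. This is essentially the content of Lemmas \ref{commut-1}, \ref{commut-11}, and \ref{commut-2}, which I would invoke in the form: a reduct of an $E_{y_j}$-term admits a common $\leadsto$-related expansion with the original, allowing the grammar information to be propagated. A minor additional point is to rule out that $\n$ itself becomes the head variable along the way, but this is immediate from nicely, since a $J$-guarded occurrence of $\n$ can never rise to head position.
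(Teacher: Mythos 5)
Your argument is sound, and it is in fact far more detailed than the paper's own proof of this lemma, which consists of the single sentence ``By the properties of $J$.'' Your treatment of the \emph{nicely} case is exactly the intended one: a $\b$-step can only copy or erase guarded occurrences of $\n$, and the $J$-rule only creates new guarded ones. For the \emph{correctly} case your overall plan is right, but one pointer is slightly off: Lemmas \ref{commut-1}, \ref{commut-11} and \ref{commut-2} are about the relation $\leadsto$ between a term and its $J$-decorated variant, not about what reduction does to a term that is already decorated, so they are not quite the tool for transferring the $E_{y}$-structure from the head normal form of $t$ to that of $t'$. What you actually need is (i) the standard fact that if $t\tr^* t'$ then the principal head normal form of $t'$ is obtained from that of $t$ by internal reductions of the arguments, and (ii) closure of the grammar $E_{y}$ under reduction together with the fact that a reduct of a member of $E_{y}$ containing $\n$ still contains $\n$; both of (ii) follow by a direct case analysis on the grammar using Lemma \ref{utile}(1), since every $e_{w}\in E_{w}$ contains $w$ (so no argument is erased by a $\b$-step inside the grammar) and the $J$-rule duplicates rather than discards its guards. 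With that substitution your proof goes through, and the final observation that a guarded $\n$ can never reach head position is correct and worth keeping.
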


\begin{proof}
 By the properties of $J$.
\end{proof}

\begin{lem}\label{pas_applique}
The variable $\n$ is never applied in a reduct of $F[A']$.
\end{lem}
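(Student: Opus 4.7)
My plan is to prove the stronger invariant that in every reduct $u'$ of $F[A']$, each free occurrence of $\nu$ lies inside a subterm of the form $(J \ \nu \ w)$, i.e.\ $\nu$ appears only as the middle argument of some $J$. Since such an occurrence is not in applicative position, this immediately gives the lemma.

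To set up the induction, I first note that $F[A] \leadsto^* F[A']$: the two terms differ only at the finitely many positions where $x$ occurs free in $F$, and at each such position $A$ is replaced by $A' = (J \ \nu \ A)$; since $A \in E_A$ trivially, $(J \ \nu \ A) \in E_A$ by the second clause of the grammar, so each replacement is a single $\leadsto$-step. Given now any reduct $u'$ of $F[A']$, Lemma \ref{commut-2}(2) provides a $v$ with $F[A] \tr^* v$ and $v \leadsto^* u'$. Because $A$ is closed and $F$ has only $x$ as free variable, $F[A]$ is closed; in particular $\nu$ does not occur in $F[A]$, and hence not in $v$.

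The heart of the argument is the following structural invariant on the grammar $E$: if $u$ contains $\nu$ only inside subterms of the form $(J \ \nu \ -)$, then so does every $t \in E_u$. This is a straightforward induction on the derivation $t \in E_u$, each of the three clauses preserving the invariant: the base case $t = u$ is immediate; the clause $t = (J \ \nu \ e_u)$ produces $\nu$ exactly in the required position with the IH applied to $e_u$; and for the clause $t = \l \overrightarrow{y}. (e_u \ \overrightarrow{e_y})$ one uses $\alpha$-renaming to ensure $y_i \neq \nu$, so that each $e_{y_i} \in E_{y_i}$ also satisfies the invariant by IH. Propagating this invariant along the finitely many $\leadsto$-steps of $v \leadsto^* u'$ (at each step only the single subterm in the hole is modified, the surrounding context being untouched), and using that $v$ has no occurrence of $\nu$ at all, I conclude that every $\nu$-occurrence in $u'$ is the middle argument of some $J$, so no subterm of the form $(\nu \ w)$ can appear. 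I do not foresee any real obstacle: the $\leadsto$-machinery and the grammar $E$ developed in the preceding lemmas were set up precisely to make this kind of tracking routine.
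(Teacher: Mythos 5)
Your underlying invariant is exactly the paper's notion that $\nu$ occurs \emph{nicely} (every occurrence of $\nu$ sits as the immediate argument of $J$, i.e.\ inside a subterm $(J\ \nu)$), but the paper establishes it for all reducts directly: $\nu$ occurs nicely in $F[A']$ itself, and Lemma \ref{reste} states that niceness is preserved by reduction, because the only rule involving the constant $J$ rewrites $(J\ \nu\ u)$ into a term in which every $\nu$ again appears as $(J\ \nu\ \cdot)$, and ordinary $\beta$-steps never break the application node $(J\ \nu)$ since $J$ is a constant and $\nu$ is never bound. Your detour through $F[A]\leadsto^{*}F[A']$ and Lemma \ref{commut-2}(2) is not only unnecessary; as written it opens a genuine gap.

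The gap is in the step ``propagating this invariant along the finitely many $\leadsto$-steps of $v\leadsto^{*}u'$''. Your preservation lemma is about a standalone term $u$ and the members of $E_u$, whereas a $\leadsto$-step acts on $t=C[u]$, and the $J$ guarding a $\nu$-occurrence of $u$ may lie in $C$ rather than in $u$; then the invariant for the whole term $t$ does not localize to the subterm $u$ in the hole, and your lemma cannot be applied to that step. Concretely, $(J\ \nu\ \nu)\in E_{\nu}$, so from the nicely term $t_1=(J\ \nu\ y)$ a perfectly legal $\leadsto$-step (wrapping the middle $\nu$) produces $t_2=(J\ (J\ \nu\ \nu)\ y)$, in which the second $\nu$ is the \emph{third} argument of the inner $J$, not the middle one; and indeed $(J\ \nu\ \nu)$ reduces by the $J$-rule to $\l y_1\ldots\l y_p.(\nu\ (J\ \nu\ y_1)\ldots(J\ \nu\ y_p))$, where $\nu$ is applied. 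Hence ``$v$ is $\nu$-free and $v\leadsto^{*}u'$'' does not by itself imply that $\nu$ is unapplied in $u'$ or its reducts: you would need to know that the particular chain supplied by Lemma \ref{commut-2}(2) never wraps an occurrence of $\nu$, which the bare statement of that lemma does not give you. The direct argument --- invariance of ``nicely'' under $\tr$ and the $J$-rule, starting from $F[A']$ --- avoids all of this.
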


\begin{proof}
The variable $\nu$ occurs nicely in $F[A']$, then, by Lemma
\ref{reste}, it occurs nicely in every reduct of $F[A']$, thus
$\n$ is never applied in a reduct of $F[A']$.
\end{proof}

\subsubsection{End of the proof}\hspace*{\fill} \\

\begin{prop}\label{p1}
For $k\geq k_0$, $\n$ occurs correctly in $(A' \ V_k[A])$.
\end{prop}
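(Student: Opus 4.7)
The plan is to explicitly compute a head normal form of $(A' \ V_k[A])$ and verify the two clauses of ``$\n$ occurs correctly''. Let $n = lg(V_k)$; by Lemma \ref{urs} we have $n = lg(U_k) \leq l$, so $n < p$ by the choice of $p \geq lg(\overrightarrow{z}) + l + 2$.

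First I would unroll the head constant $J$. Using the reduction rule for $J$ and then $n$ head $\b$-reductions gives
\[
(A' \ V_k[A]) \tr^* \l y_{n+1}\ldots y_p.\ B,
\]
where the $y_i$ are fresh and $B = (A \ (J \ \n \ V_k[A]_1) \cdots (J \ \n \ V_k[A]_n) \ (J \ \n \ y_{n+1}) \cdots (J \ \n \ y_p))$. Set $T_k = (A \ V_k[A]_1 \cdots V_k[A]_n \ y_{n+1} \cdots y_p)$; wrapping each argument of the head $A$ into an element of the appropriate $E$-class gives $T_k \leadsto^* B$. By Lemma \ref{head}, for $k \geq k_0$ we have $(A \ V_k[A]) \tr^* t_k = \l \overrightarrow{z}.\ (y \ \overrightarrow{w_k})$ with $y$ and $\overrightarrow{z}$ fixed and $|\overrightarrow{z}| = m$. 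Since $p - n \geq p - l \geq m + 2$, this extends to a head reduction
\[
T_k \tr^* (y' \ \overrightarrow{w'_k} \ y_{n+m+1} \cdots y_p),
\]
where $y'$ and $\overrightarrow{w'_k}$ come from $y, \overrightarrow{w_k}$ via the substitution $z_i := y_{n+i}$. In both sub-cases of section \ref{4.2}, $y'$ is either a bound $y_{n+j}$ (when $y \in \overrightarrow{z}$) or the free variable $y$, so the displayed term is already a head normal form.

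Next I would transport this through $\leadsto^*$. Lemma \ref{commut-1}(2), applied to $T_k \leadsto^* B$ and $T_k \tr^* (y' \ \overrightarrow{w'_k} \ y_{n+m+1} \cdots y_p)$, produces $v$ with $B \tr^* v$ and $(y' \ \overrightarrow{w'_k} \ y_{n+m+1} \cdots y_p) \leadsto^* v$. Inspecting the specific $\leadsto$-chain from $T_k$ to $B$---which wraps each argument of $A$ into $(J \ \n \ \cdot)$---in parallel with the head reduction of $T_k$, the trailing $y_{n+m+1}, \ldots, y_p$ sit at positions never consumed by a $\l$, so they appear in $v$ exactly as $(J \ \n \ y_{n+m+1}), \ldots, (J \ \n \ y_p)$. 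Prepending the outer abstractions we obtain
\[
(A' \ V_k[A]) \tr^* \l y_{n+1}\ldots y_p.\ (y' \ \widehat{w}_1 \cdots \widehat{w}_s \ (J \ \n \ y_{n+m+1}) \cdots (J \ \n \ y_p)),
\]
which is a head normal form. Its final sub-sequence $y_{n+m+1}, \ldots, y_p$ of bound variables has length $p - n - m \geq 2$ and $\n$ occurs in each associated argument $(J \ \n \ y_j)$. Moreover $\n$ appears in $A'$ only inside sub-terms of the form $(J \ \n)$, and this property is preserved by Lemma \ref{reste}, so $\n$ also occurs nicely in $(A' \ V_k[A])$. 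Both clauses of the definition are thus satisfied.

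The main obstacle is the transport step: Lemmas \ref{commut-1} and \ref{commut-11} give the existence of a common reduct $v$ but not its precise shape, while the definition of ``occurs correctly'' demands that $\n$ actually appear in the trailing arguments. The argument has to follow the $\leadsto$-chain from $T_k$ to $B$ alongside the head reduction of $T_k$ and observe that trailing positions that are never absorbed by an outer $\l$ carry their $(J \ \n \ \cdot)$-wrapper unchanged into $v$. This is exactly where the bound $p \geq lg(\overrightarrow{z}) + l + 2$ is used: it guarantees at least two such untouched trailing positions, each of which contains $\n$.
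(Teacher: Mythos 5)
Your proof is correct and takes essentially the same route as the paper's: unroll the $J$-rule, relate $(A' \ V_k[A])$ to the known head normal form $t_k$ of $(A \ V_k[A])$ through $\leadsto^*$ and the commutation lemmas, and use $p \geq lg(\overrightarrow{z})+l+2$ to guarantee surviving trailing arguments of the form $(J \ \n \ y_j)$. The only cosmetic difference is that you transport via Lemma \ref{commut-1}(2) applied to the fully applied term $T_k$, whereas the paper first applies Lemma \ref{commut-11} to $(A \ V_k[A]) \leadsto^* (A \ (J \ \n \ V_k[A]))$ and then appends the $\overrightarrow{(J \ \n \ y)}$; both versions rely on the same inspection of the construction to see that $\n$ really occurs in the trailing arguments.
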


\begin{proof}
$(A \ V_k[A]) \leadsto^* (A \ (J \ \n \ V_k[A]))$ (note that this
last term may be misunderstood: it actually means $ (A \ (J \ \n
\ a_1) \ ... \ (J \ \n \ a_q))$ where $V_k[A]$ is the sequence $
a_1... a_q$) and $(A \ V_k[A]) \tr_h^* \l\overrightarrow{z}.(y \
\overrightarrow{w_k})$. Thus, by Lemma \ref{commut-11}, $(A \ (J \
\n \ V_k[A])) \tr^* \overrightarrow{\l z}\overrightarrow{\l x}. (y
\ \overrightarrow{w_k'}\ \overrightarrow{e_x})$ for some
$\overrightarrow{w_k} \leadsto^* \overrightarrow{w_k'}$ and
$\overrightarrow{e_x}$.

But $(A' \ V_k[A]) \tr^* \l \overrightarrow{y} (A \ (J \ \n \
V_k[A]) \ \overrightarrow{(J \ \n \ y)})$ and thus $(A' \ V_k[A])
\tr^* \l \overrightarrow{y}.(\l \overrightarrow{z} \l
\overrightarrow{x}.$ $ (y \ \overrightarrow{w_k'}\
\overrightarrow{w_x}) \ \overrightarrow{(J\ \n \ y)})$. Using then
 $p - l \geq lg(\overrightarrow{z})+2$ and distinguishing $y \not \in \overrightarrow{z}$ or $y \in
\overrightarrow{z}$ it follows easily that $(A' \ V_k[A]) \tr^*
\l \overrightarrow{y_1} (y_2 \ \overrightarrow{w_k''}\
\overrightarrow{w_{y_3}})$ where $\overrightarrow{y_3}$ is a final subsequence of
$\overrightarrow{y_1}$, $\overrightarrow{e_{y_3}} \in
\overrightarrow{E_{y_3}} $ and $lg(\overrightarrow{y_3})\geq 1$.

The fact that $\n$ occurs nicely is clear.
\end{proof}

\begin{prop}\label{p2}
Assume that there is  a sequence  $(j_k)_{k \in \N}$ of integers such that,
for each $k$, $\n$ occurs correctly in $(A' \ U_{j_k}[A])$. Then
$\n$ is persisting in $F[A']$.
\end{prop}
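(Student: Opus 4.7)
The plan is to establish $\nu \in \b\O(F[A'])$. Combined with Lemma~\ref{pas_applique}, which guarantees that $\nu$ is never applied in any reduct of $F[A']$, this yields persistence. So fix any $\b\O$-reduction $F[A'] \tr^* H \tr^*_{\Omega} G$; the goal is to show that $\nu$ occurs in $G$.

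First, I apply Lemma~\ref{reduction} to $F[A'] \tr^* H$, obtaining $F \tr^* F' = D[[]_i = w_i : i \in \mathcal{I}]$ with $w_i = (x_{[i]} \ R_i)$ and $R_i = Arg(x_{[i]}, F')$, together with $H = D[[]_i = w'_i]$ and $w_i[A'] \tr^* w'_i$. By Lemma~\ref{Fk} I choose $k$ with $F' \tr^* F_k$; since the hypothesis provides an unbounded sequence $(j_k)$ of indices for which $\nu$ occurs correctly in $(A' \ U_{j_k}[A])$, I enlarge $k$ to one of the $j_k$'s so that $\nu$ occurs correctly in $(A' \ U_k[A])$. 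The good-branch occurrence $x_{(k)}$ of $F_k$ is the residue of a unique pure occurrence $x_{[i_0]}$ in $F'$ (Lemma~\ref{ajout}); setting $R = R_{i_0}$, we have $w_{i_0}[A'] = (A' \ R[A'])$ and $R \sqsubseteq U_k$ by Lemma~\ref{residu}.

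The main technical step is to show that $\nu$ occurs correctly in $w_{i_0}[A']$ (hence, by Lemma~\ref{reste}, in $w'_{i_0}$). The $\leadsto$-machinery yields $(A \ U_k[A]) \leadsto^* (A' \ U_k[A'])$---replace each substituted $A$ by $(J \ \nu \ A) = A'$---and using Lemmas~\ref{commut-11} and~\ref{commut-2} one verifies that ``$\nu$ occurs correctly'' transfers from $(A' \ U_k[A])$ to $(A' \ U_k[A'])$. From $R \sqsubseteq U_k$ we obtain $\sigma(R) \tr^* U'$ for some substitution $\sigma$ and some initial segment $U'$ of $U_k$; the variables in the domain of $\sigma$ are free in $R$ but bound by $\lambda$'s above $[]_{i_0}$ in $D$, so the outer $\b$-reductions of $F[A']$ that fire these $\lambda$'s realize exactly the $\sigma$-substitution on $w_{i_0}[A']$. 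By Lemma~\ref{commutation} I may reorganize the reduction $F[A'] \trbo^* G$ so that $w'_{i_0}$ already sees the effect of $\sigma$; then $w'_{i_0}$ is a reduct of $(A' \ U'[A'])$, whose correctness follows from that of $(A' \ U_k[A'])$ taken with fewer arguments.

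To conclude, one argues as in the simple argument of Section~\ref{cas1} that the $\Omega$-reduction $H \tr^*_{\Omega} G$ cannot erase the $\nu$-carrying part of $w'_{i_0}$: goodness of $x_{(k)}$ forces every sub-term of $F_k$ (and, via $F' \tr^* F_k$, every sub-term of $F'$) enclosing $x_{[i_0]}$ to be solvable under $[x:=A]$; hence no enclosing sub-term of $D[w'_i]$ is unsolvable, so $\Omega$-reduction preserves the $\nu$-carrying piece. Therefore $\nu$ occurs in $G$. The hard part will be the middle step: rigorously transferring ``$\nu$ occurs correctly'' through $\leadsto$ and past the substitution $\sigma$ of Lemma~\ref{residu}, which requires meshing local reductions inside the holes $[]_i$ with outer $\lambda$-bindings of $D$---precisely what the $E_u$-grammar and commutation lemmas of the preliminary subsection were developed to support.
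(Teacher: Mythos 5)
Your skeleton matches the paper's: reduce to $\nu\in\b\O(F[A'])$ via Lemma~\ref{pas_applique}, split a $\b\O$-reduct as $F[A']\tr^* H\tr_{\O}^* G$, decompose $H$ with Lemma~\ref{reduction}, locate the hole $[]_{i_0}$ whose occurrence feeds the good branch at some level $j_k$, and finish the $\O$-part as in Section~3.1. The gap is exactly where you flag it, and your proposed repair does not work. You want to conclude that $\nu$ occurs correctly in $w_{i_0}[A']=(A'\ R[A'])$ by arguing that $w'_{i_0}$ is a reduct of $(A'\ U'[A'])$ for an initial segment $U'$ of $U_{j_k}$, after ``reorganizing the reduction so that $w'_{i_0}$ already sees the effect of $\sigma$''. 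But $H$ is given, and Lemma~\ref{reduction} places the whole of $F[A']\tr^* H$ \emph{inside the holes} of $D$: the binders of the variables in $dom(\sigma)$ are never fired on the way to $H$, because $\sigma$ arises from the further reduction $F'\tr^* F_{j_k}$, which is no part of $F[A']\tr^* H$. So $w'_{i_0}$ is a reduct of $(A'\ R[A'])$ with $R$ uninstantiated, not of $(A'\ \sigma(R)[A'])$, and the hypothesis about $(A'\ U_{j_k}[A])$ does not transfer to it. Two further problems: correctness of $(A'\ U_{j_k}[\cdot])$ does not imply correctness ``with fewer arguments'', since the $\nu$-carrying tail of the head normal form may only appear after all of $U_{j_k}$ has been consumed; and $(A'\ R[A'])$ need not satisfy the correctness property at all, because its head reduction may block on a free variable of $R$.

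The paper's argument proves a weaker intermediate claim, and that is what makes it go through: it never shows that $\nu$ occurs \emph{correctly} in the hole, only that it \emph{occurs} there. Concretely, pull the reduction $w_{i_0}[A']\tr^* w'_{i_0}$ back along $\leadsto$ (Lemma~\ref{commut-2}, item 2) to get $(A'\ R[A])\tr^* w''\leadsto^* w'_{i_0}$; then apply $\sigma$ and append the missing tail $V[A]$ of $U_{j_k}$, so that $(\sigma(w'')\ V[A])$ and $(A'\ U_{j_k}[A])=(A'\ W[A]\ V[A])$ have a common reduct $s$. Correctness of $(A'\ U_{j_k}[A])$ persists to $s$ (Lemma~\ref{reste}), so $\nu$ occurs in $s$; since $\sigma$ and $V[A]$ are $\nu$-free, $\nu$ must already occur in $w''$, hence in $w'_{i_0}$ and in $H$. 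The same common reduct $s$ is then reused, together with Lemma~\ref{utile} (item 3), to show that $\O$-reduction inside $w'_{i_0}$ cannot erase all of its occurrences of $\nu$, a point your sketch of the final step also needs. You should replace your middle step by this common-reduct argument.
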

 \begin{proof}
 Let $t_1$ be a reduct of $F[A']$. By Lemma \ref{pas_applique}, it is enough to show that $\n$
 does occur in $t_1$.
 By Lemma \ref{commutation}, let $t_2$ be such that $F[A'] \tr^* t_2
 \tr^*_{\O} t_1$. By Lemma \ref{reduction},
  $F\tr^*G=D[[]_i=w_i \ : \ i \in {\mathcal I}]$ where  $w_i=(x_{[i]} \ Arg(x_{[i]}, G))$
 and  $t_2=D[[]_i=w'_i \ : \ i \in {\mathcal I}]$ where $w'_i$ is a
 $\beta$-reduct of $w_i[A']$.

 By Lemma \ref{Fk}, $G \tr^* F_{j_k}$ for some $k$. Let $x_{[j]}$
 be the occurence of $x$ in $G$ which has $x_{( {j_k})}$ as residue
 in $F_{j_k}$. By Lemma \ref{residu}, there is a substitution
 $\s$ and a sequence $V$ of terms such that
 $U_{j_k} = W :: V$ and $\s(M_j)$ reduces to $W$ where
 $M_j=Arg(x_{[j]},G)$. By Lemma \ref{commut-2}, let $w''_j$ be such that
 $(A' \ \ Arg(x_{[j]}, G)[A])$ reduces to $w''_j$  and $w''_j
 \leadsto^* w'_j$.

 Since $(A'\  \s(M_j)[A] )$ reduces both to $\s(w''_j)$ and to
 $(A'\  W[A] )$, let $s$ be a common reduct of $( \s(w''_j) \
 V[A])$ and $(A'\  W[A] \ V[A] )$. Since $\n$ occurs correctly in
 $(A'\ W[A] \ V[A] )$, it  occurs correctly (by Lemma \ref{reste})
 in $s$. But $(\s(w''_j) \ V[A])$
reduces to $s$ and $\n$ does not occur in  $\s$ neither in $V[A]$, then
$\n$ occurs in $w''_j$.  Thus it occurs in $w'_j$ and thus in $t_2$.

Since  an $\O$-reduction of $w''_j$ cannot
erase $\n$ (otherwise, by Church-Rosser, $\n$ will not occur in a
reduct of $s$) and $w''_j \leadsto^* w'_j$, then, by  Lemma \ref{utile}(item 3),
an $\O$-reduction of $w'_j$ cannot erase all its  $\nu$.
The same proof as the one in section 3.1
shows that $\n$ cannot be totally erased by the $\O$-reduction
from  $t_2$ to $t_1$ and thus it occurs in $t_1$.
 \end{proof}

\begin{cor}
$\n$ is persisting in $F[A']$.
\end{cor}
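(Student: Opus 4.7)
The plan is to deduce the corollary immediately by combining the two preceding propositions: Proposition \ref{p2} reduces our goal to exhibiting an infinite sequence $(j_k)_{k\in\N}$ along which $\n$ occurs correctly in $(A' \ U_{j_k}[A])$, while Proposition \ref{p1} supplies exactly such an assertion but phrased in terms of the $V_k$'s rather than the $U_k$'s. The only work is to bridge this minor discrepancy, and no new machinery is required.

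To close the gap I would first invoke Lemma \ref{prem}(1), which gives $V_k \tr^* U_k$ and therefore $(A' \ V_k[A]) \tr^* (A' \ U_k[A])$ for every $k$. Proposition \ref{p1} guarantees that $\n$ occurs correctly in $(A' \ V_k[A])$ whenever $k \geq k_0$, and Lemma \ref{reste} propagates the property of occurring correctly along the above reduction, yielding that $\n$ still occurs correctly in $(A' \ U_k[A])$ for every such $k$. Setting $j_k := k + k_0$ then supplies the sequence demanded by Proposition \ref{p2}, whose conclusion is precisely that $\n$ is persisting in $F[A']$.

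The genuine technical content has already been absorbed into the two preceding propositions --- the head-reduction computation together with the constraint $p \geq lg(\overrightarrow{z}) + l + 2$ for Proposition \ref{p1}, and the combined $\b\O$-reduction analysis built on Lemmas \ref{reduction}, \ref{commut-2}, and \ref{utile} for Proposition \ref{p2} --- so there is no serious obstacle to overcome here; the corollary itself is just a short assembly step.
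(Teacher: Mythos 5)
Your proposal is correct and follows essentially the same route as the paper's own proof: apply Proposition \ref{p1} to get that $\n$ occurs correctly in $(A'\ V_k[A])$ for $k\geq k_0$, transfer this to $(A'\ U_k[A])$ via the reduction $V_k \tr^* U_k$ and Lemma \ref{reste}, and conclude by Proposition \ref{p2}. The only difference is that you make explicit the appeal to Lemma \ref{prem}(1), which the paper leaves implicit.
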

\begin{proof}
By Proposition \ref{p1}, for $k\geq k_0$, $\n$ occurs correctly in
$(A' \ V_k[A])$.  By Lemma \ref{reste},  $\n$ occurs correctly in
$(A' \ U_k[A])$ and we conclude by Proposition \ref{p2}.
\end{proof}

\section{Case 2(a) of Theorem \ref{cases}}\label{cas_2}

Here is an example of this situation.
Let $F = (G \ x)$ where $G$  is a $\l$-term such that $G \tr^* \l u
\l v. (v \ (G \ (u \ K \ K) \ v))$. We can take $\l z. (z^k \ (G \
(x \ K^{\sim 2k}) \ z)$ for $F_k$. Thus  $U_k = K^{\sim 2k}$, $S_k
= K^{\sim 2}$, $t_k = K$ and $\s_k = id$. $F[K] \not \simeq
F[\Omega]$. We are in situation 2.(a) of Theorem \ref{cases}
because $(\s_k(t_k) \ S_k[K])= (K \ K^{\sim 2}) \tr^* K$.

\medskip

\noindent {\bf The idea of the construction}

The desired $A'$ looks like the one for the previous case. It will
be $(\widehat{J}\ c_0 \ \n \ A)$ for {\em some other term}
$\widehat{J}$ that behaves mainly as $J$  but has to be a bit more
clever.  The difference with the previous one is the following.
For the $J$ of section 3.2, $(J \ \n \ u)$ reduces to a term where
the $J$ that occurs in the arguments of $u$ is {\em the same}.
Here $\widehat{J}$ has to be parameterized by some integers. This
is the role of its first argument. We will define a term
$\widehat{J}$ and will denote $ (\widehat{J} \ c_n)$ by $J_n $
i.e. intuitively, the $\widehat{J}$ at ``step'' $n$. Here $(J_n \
\n \ u)$ will reduce to a term where $J_{n+1}$ and not $J_n $
occurs in the arguments of $u$. See Definition \ref{J2} and Lemma
\ref{nu2} below.

The reason is the following. In section \ref{4.2}, after some
steps, the head variable of $(A \ V_k[A])$ does not change anymore
and the variable $\n$ can no more disappear. Here, to be able to
ensure that  $\n$ does not disappear, $\widehat{J}$ must occur in
head position infinitely often and, for that, it has to be able to
introduce more and more $\l$'s.

The $\l$'s that are introduced by $J_n$  (i.e.  $\l y_1 ... \l
y_{h_n}$ of Lemma \ref{nu2}) are used for two things. First they
ensure that a $\widehat{J}$ will be added to the next $S_k$, say
$S_{k_n}$, coming in head position. Secondly, they ensure that
$\n$ occurs correctly in $(A' \ V_{j_n}[A] )$ where $j_n$ is large
enough to let $S_{k_n}$ come in head position.

 This is the idea but there is one  difficulty. If $S_{k_n}$ comes in head
 position during the head reduction of $(A \ V_{j_n}[A] )$, it
 will also come in head position during the reduction of $(A' \ V_{j_n}[A]
 )$ but (see Lemma \ref{commut-11}) some $\l$'s are put in front (the $\overrightarrow{\l y}$ of Lemma \ref{hnfJ22}).
  If we could compute their number it will not be problematic
 but, actually, this is not possible. For the following reason:
 these $\l$'s
 come from the $J_k$ that came in head positions in previous
 steps but, when we are trying to put a $\widehat{J}$ in front of  $S_{k_n}$  it is possible that
  $J_p$ has already appeared  in head position for some $p > n$.

  We thus proceed as follows. $J_0$ introduces enough $\l$'s to put $J_1$ in
  front of $S_{k_0}$ (the first $S_k$ that comes in head position during
  $\r$) and to ensure that $\n$ will occur correctly in $(A' \ V_{j_0}[A]
 )$. Then we look at the first term  of $V_{j_0}$ that comes in
 head position during $\r$. This term, say $a_0$,  has $J_1$ in front of it in the head reduction of $(A' \ V_{j_0}[A]
 )$.
 This is the term ($a_0$ may be a term in $S_{k_0}$ but it may be some other term) that will
 allow
 to define the number of $\l's$ that $J_1$ introduces. It  introduces enough $\l's$ to put $J_2$ in front of $S_{k_1}$, the
 next
 $S_k$ coming in head position, and to ensure that $\n$ occurs correctly in the reduction of $(A' \ V_{j_1}[A]
 )$. Note that $k_1$ has to be large enough to avoid the $\l$'s that
 occur at the beginning of the term where $a_0$ occurs in head
 position. We keep going in this way to define the $J_n$. It is clear
 (this could be formally proved by using standard fixed point theorems)
 that the function
 $h$ (see definitions \ref{def_5.1} and  \ref{J2} below) that computes the parameter $c_n$ at step $n$
 is recursive.
A formalization of this is given in Definition \ref{def_5.1}
below.

 There is a final, though
 not essential, difficulty to make this precise. The definition of $\widehat{J}$ needs to know the function $h$.
 But to compute $h$ we need some approximation of $\widehat{J}$. More precisely to compute $h(n)$
 we need to know the behavior of $\widehat{J}$ where only the values of $h(p)$ for $p<n$ will be used. To do that,
 we introduce  fake $J_n$
 (they are denoted as $\tilde{J}_n$ below). They are as $J_n$ but the term
 $J_{n+1}$ (which is not yet known since $h(n+1)$ is not yet known) is replaced by some fresh
 constant $\g$.
Note that, using the standard fixed point theorem, we could avoid
these fake $\tilde{J}_n$ but then, Definition \ref{def_5.1} below
should be more complicated because the constant $\gamma$ would be
replaced by a term computed from a code of $\widehat{J}$ and we
should explain how this is computed ...

Finally note that it is at this point that we use the fact that
the branch in ${\mathcal T}$ is recursive.

 \medskip

 \noindent {\bf Back to the example}

 With the example given before, we can take for $\widehat{J}$ a $\l$-term such that\\
$\widehat{J} \tr^* \l u \l v \l y_1 \l y_2. (v \ (\widehat{J} \ u
\ y_1) \ \ (\widehat{J} \ u \ y_2))$ and for $K'$ the term
$(\widehat{J} \ \nu \ K)$. Then $K' \tr^* K'' = \l y_1 \l y_2 \l
z_1 \l z_2.(y_1 \ (\widehat{J} \ u \ z_1) \ (\widehat{J} \ u \
z_2))$. Since, if $F[K'] \tr^* t$, then $t \tr^* \l z. (z^k \ (G \
K'' \ z))$. It follows that $\nu$ is persisting in $F[K']$. Note
that here the function $h$ is constant. Given any recursive
function $h'$ it will not be difficult to build terms $F,A$ such
that the corresponding function $h$ is precisely $h'$.

\begin{defi}\label{def_5.1}
For each $i$, let $l_i$ be the number of $\l$'s at the  head of
$t_{i}$.
\end{defi}

The definition of $\widehat{J}$  needs some new objects. Let
$\gamma$ be a new constant. We define, by induction, the integers
$j_{n}$, $h_{n}$, the terms $\tilde{J}_{n}, a_{j_n}$ and the
sequence of terms  $B_{j_n}$.

In this definition a term (or a sequence of terms) marked with
$'$ is a term in relation $\rightsquigarrow$ with the
corresponding unmarked term.

\begin{itemize}
  \item ({\em Step 0} )
Let $j_0$ be the least integer $j$ such that some $S_k$  comes in
head position during $\r_j$ (the head reduction of $(A \
V_j[A])$), $k_0 = lg(V_{j_0})$, $h_0 = max (l_0, k_0)$. Let
$V_{j_0} = d_1...d_{k_0}$ and  $\tilde{J}_0 = \l n \l x \l
x_1...\l x_{h_0} (x \ (\gamma \ n \ x_1) \ ... \ (\gamma \ n \
x_{h_0}))$. Then
\[((\tilde{J}_0 \ \nu \ A) \  V_{j_0}[A]) \tr^* \l y_1...y_{r_0} (A
\ (\gamma \ \nu \ d_1[A]) \ ... \ (\gamma \ \nu \ d_{k_0}[A]) \
(\gamma \ \nu \ y_1) \ ... \ (\gamma \ \nu \ y_{r_0}).\]
Since, for some   $i$,  $d_i[A]$ comes in head position during the
head reduction of $(A \ V_{j_0}[A])$, then, for some $i'$,
$(\gamma \ \nu \ d_{i'}[A])$ comes in head position during the
head reduction of $((\tilde{J}_0 \ \nu \ A)\  V_{j_0}[A])$ and
thus $((\tilde{J}_0 \ \nu \ A)\  V_{j_0}[A]) \tr^*  \l X_{j_0}
((\gamma \ \nu \ a_{j_0}) \ B_{j_0})$ for some $a_{j_0}$, some
sequence $X_{j_0}$ of variables and some sequence $B_{j_0}$ of
terms.

 \item ({\em Step 1} )
Let $j_1$ be the least integer $j > j_0$ such that some $S_k$  comes
in head position during $\r_j$ for $k$ large enough ($lg(X_{j_0})
< lg(S_{j_0}:: ... :: S_{k-1})$ is needed). Then  
\[((\tilde{J}_0 \
\nu \ A) \ V_{j_1}[A]) = ((\tilde{J}_0 \ \nu \ A) \ V'_{j_0}[A] \
S'_{j_0}...S'_k ... S'_{j_1-1}) \tr^*  ((\gamma \ \nu \ a'_{j_0})
\ B'_{j_0}) \ T_0.\]
Let $h_1 = l_{j_0} + lg(B'_{j_0}:: T_0) + 1$ and $\tilde{J}_0 =
\tilde{J}_1 [\gamma := \l n \l x \l x_1...\l x_{h_1} (x  \ (\gamma
\ n \ x_1) \ ...$ $ \ (\gamma \ n \ x_{h_1}))]$. Then
$((\tilde{J}_1 \ \nu \ A) \ V_{j_1}[A]) \tr^*  \l X_{j_1} ((\gamma
\ \nu \ a_{j_1}) \ B_{j_1})$ for some term $a_{j_1}$ and  some
sequence $B_{j_1}$ of terms.

  \item ({\em Step n+1} )
Assume the integers $j_{n}$, $h_{n}$ and the term $\tilde{J}_{n}$
are already defined.  Let $j_{n+1}$ be the least integer $j > j_n$
such that some $S_k$ comes in head position during $\r_j$ for $k$
large enough ($lg(X_{j_n}) < lg(S_{j_1}:: \ ...\ :: \ S_{k-1})$ is
needed). Then 
\[((\tilde{J}_n \ \nu \ A) \ V_{j_{n+1}}[A]) =
((\tilde{J}_n \ \nu \ A) \ V'_{j_1}[A] \ S'_{j_n}...S'_k ...
S'_{j_{n+1}-1}) \tr^* ((\gamma \ \nu \ a'_{j_n}) \ B'_{j_n})\
T_n.\]
Let $h_{n+1} = l_{j_n} + lg(B'_{j_n}:: T_n) + 1$ and
$\tilde{J}_{n+1} = \tilde{J}_n [\gamma := \l p \l x \l x_1...\l
x_{h_{n+1}} (x  \ (\gamma \ p \ x_1) \ ...$ $ \ (\gamma \ p \
x_{h_{n+1}}))]$. Then $((\tilde{J}_{n+1} \ \nu \ A) \
V_{j_{n+1}}[A]) \tr^* \l X_{j_{n+1}} ((\gamma \ \nu \ a_{j_{n+1}})
\ B_{j_{n+1}})$ for some term $a_{j_{n+1}}$ and  some sequence
$B_{j_{n+1}}$ of terms.

 \end{itemize}

\begin{com}\label{CO:rec}
 Note that, since the branch in ${\mathcal T}$ is recursive, it
 follows, by standard arguments, that the function $h$ defined by $h(i) =h_i$ is
 computable.
\end{com}

 \begin{defi}\label{J2}\hfill
\begin{itemize}
  \item  Let $H$ be  a $\l$-term that
 represents the function $h$.
  \item Let $T = \l a \l b \l c. (a   \    (b   \   (z  \   (suc \ k)  \  n  \
c)))$, $D = \l k \l n \l x.  (H   \   k   \    T    \    I \ x)$
and $\widehat{J} = (Y \  \l z. D)$ where $Y$ is the Turing fixed
point operator.
  \item For each $n \in \N$, we denote $(\widehat{J} \ c_n)$ by $J_n$.
\end{itemize}

\end{defi}

\begin{lem}\label{nu2}
For each $n \in \N$, $(J_n \ \n \ u) \tr^* \l y_1 ... \l y_{h_n}.
(u \ (J_{n+1} \ \n \ y_1) ... (J_{n+1} \ \n \ y_{h_n}))$.
\end{lem}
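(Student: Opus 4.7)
The plan is to unfold the Turing fixed-point definition of $\widehat{J}$, use that $H$ represents $h$ to replace $H\ c_n$ by $c_{h_n}$, and then iterate $h_n$ times the combinator coming from $T$. From $\widehat{J}=(Y\ \l z.D)$ and the defining property of $Y$ I get $\widehat{J}\tr^* D[z:=\widehat{J}]$, so
\[\widehat{J}\tr^*\l k\l m\l x.(H\ k\ T_z\ I\ x),\qquad T_z=\l a\l b\l c.(a\ (b\ (\widehat{J}\ (suc\ k)\ m\ c))),\]
where I renamed the second bound variable of $D$ to $m$ to avoid a clash with the statement. Applying this term to $c_n,\n,u$ and using $suc\ c_n\tr^* c_{n+1}$ (so that $\widehat{J}\ (suc\ c_n)\tr^* J_{n+1}$) yields
\[(J_n\ \n\ u)\tr^*(H\ c_n\ M\ I\ u),\qquad M=\l a\l b\l c.(a\ (b\ (J_{n+1}\ \n\ c))).\]

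Since $H$ represents $h$, $H\ c_n\tr^* c_{h_n}$, and by the defining property of Church numerals $c_{h_n}\ M\ I\tr^* M^{h_n}\ I$. The lemma therefore reduces to the claim that, for every term $u$ and every $k\in\N$,
\[M^k\ I\ u\tr^*\l y_1\ldots\l y_k.(u\ (J_{n+1}\ \n\ y_1)\ \ldots\ (J_{n+1}\ \n\ y_k)),\]
which I prove by induction on $k$. The base $k=0$ is just $I\ u\tr u$. For the inductive step, reducing the head redex of $M$ gives $M^{k+1}\ I\ u\ y_1\tr^*(M^k\ I)\ (u\ (J_{n+1}\ \n\ y_1))$, and the induction hypothesis applied to $M^k\ I$ with the new head $u\ (J_{n+1}\ \n\ y_1)$ and fresh variables $y_2,\ldots,y_{k+1}$ finishes the step. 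Taking $k=h_n$ gives exactly the conclusion of the lemma.

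I do not expect any real obstacle: once the Turing fixed point is unfolded, the entire argument is a chain of head $\b$-reductions plus a short induction on $k$. The only small care needed is to keep apart the two distinct roles of the letter $n$ (the bound variable of $D$ inside $T$ versus the integer index in the statement) and to observe that the substitution $z:=\widehat{J}$ causes no capture since $\widehat{J}$ is closed.
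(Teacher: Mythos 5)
Your proof is correct and is precisely the computation the paper leaves implicit (its own proof is just ``Easy''): unfold the Turing fixed point, use that $H$ represents $h$ and that $(suc\ c_n)$ reduces to $c_{n+1}$, and iterate the combinator $M$ via the Church numeral $c_{h_n}$, with the short induction on $k$ handling the iteration. The care you take with the clash between the bound variable $n$ of $D$ and the integer index $n$ is exactly the right (and only) delicate point.
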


\begin{proof}
Easy.
\end{proof}

As in the previous section, we consider $J_n$ as constants with the reduction rules of the previous Lemma.
Let $A'=(J_0 \ \n \ A)$. We prove that $\n$ is persisting in $F[A']$.

\begin{defi}\hfill
\begin{enumerate}
  \item We define, for terms $u$, the sets $E_u$ of terms by the following
grammar:\\ $E_u = u \ | (J_n \ \nu \ e_u) \ | \l \overrightarrow{y}.
(e_u \ \overrightarrow{e_y})$
  \item Let  $t,t'$ be some terms. We denote by $t \leadsto t'$
  if there is a context $C$ with one hole such that $t=C[u]$ and
  $t'=C[u']$ where $u' \in E_u$.
\end{enumerate}
\end{defi}

\begin{lem}\label{commut2}
\begin{enumerate}
\item Assume $u \leadsto^* u'$ and $u \tr^* v$. Then,  $v
\leadsto^* v'$ for some $v'$ such that $u' \tr^* v'$.
\item Assume $u \leadsto^* u'$ and $u' \tr^* v'$. Then,  $v
\leadsto^* v'$ for some $v$ such that $u \tr^* v$.
\end{enumerate}
\end{lem}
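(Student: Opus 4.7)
The plan is to mimic, almost verbatim, the proofs of Lemmas \ref{commut-1} and \ref{commut-2} from Section \ref{4.2}, with the single constant $J$ replaced by the family $J_n$. Since the new grammar for $E_u$ allows \emph{any} $J_n$ in each expansion step, the bookkeeping about which index appears will not affect any of the structural inductions; this is why both statements of Lemma \ref{commut-1} (for one step) and Lemma \ref{commut-2} (classification of redexes in $u'$) carry over unchanged.

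First I would reprove, for the new $E_u$, the analogues of Lemma \ref{utile}: (a) if $t\in E_z$ and $t'\in E_u$ then $t[z:=t']\in E_u$; (b) if $a\leadsto a'$ and $b\leadsto b'$ then $a[x:=b]\leadsto^* a'[x:=b']$; (c) $\Omega$-redexes transport through $\leadsto^*$; (d) if $t\in E_u$ then $t\tr^*\l\overrightarrow{y}.(u\ \overrightarrow{e_y})$. Each follows by induction on the derivation of $t\in E_\cdot$, using exactly the same case analysis as before, the only difference being that the reduction of the constant $(J_n\ \nu\ a')$ produces $\l y_1\ldots\l y_{h_n}.(a'\ (J_{n+1}\ \nu\ y_1)\ldots(J_{n+1}\ \nu\ y_{h_n}))$ rather than the same $J$; this term is still in $E_{a}$ by the third clause of the grammar, because $(J_{n+1}\ \nu\ y_i)\in E_{y_i}$.

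For part 1, I would reduce to one step $u\tr v$ and consider the contracted redex $(\l x.a\ b)$ in $u=C[(\l x.a\ b)]$. By the analogue of Lemma \ref{commut-1}(1), $u'=C'[a_0']$ with $C\leadsto^* C'$ and $a_0'\in E_{\l x.a\ b}$; by the grammar, $a_0'\tr^*\l\overrightarrow{y}.((\l x.a^*)\ b^*\ \overrightarrow{e_y})$ for some $a\leadsto^* a^*$ and $b\leadsto^* b^*$. Firing this redex yields $\l\overrightarrow{y}.(a^*[x:=b^*]\ \overrightarrow{e_y})$, which by the substitution property lies in $E_{a[x:=b]}$. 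Plugging it back into $C'$ gives $v'$ with $u'\tr^* v'$ and $v=C[a[x:=b]]\leadsto^* v'$, as required.

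For part 2, again reduce to a single step $u'\tr v'$ at a redex $R'=(\l x.a'\ b')$ and follow the trichotomy of the analogue of Lemma \ref{commut-2}(1): either $R'$ is the residue of a redex $(\l x.a\ b)$ in $u$ (take $v$ to be the $\beta$-contraction of $u$ at that position and use (b) of the preliminary facts to verify $v\leadsto^* v'$); or $R'$ was created by expanding an application $(a\ b)$ in $u$ with $a'=\l\overrightarrow{y}.(a''\ \overrightarrow{e_y})$, $a\leadsto^* a''$ (take $v=u$; the $\beta$ step is absorbed into $\leadsto^*$); or $R'$ comes from the grammar step $a\mapsto(J_n\ \nu\ a')$ applied to some subterm $a$ of $u$ (take $v=u$ again; the reduction of $(J_n\ \nu\ a')$ produces a term that sits in $E_a$ by the observation in the previous paragraph, so $u\leadsto^* v'$). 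The main obstacle is exactly this last case, which is the only place where the $J_n/J_{n+1}$ change of index intervenes; once it is observed that the result of contracting $(J_n\ \nu\ a')$ lies in $E_a$, the rest of the argument is purely syntactic and identical to that of Section \ref{4.2}.
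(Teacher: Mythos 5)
Your proposal is correct and follows exactly the route the paper intends: the paper's proof of this lemma is literally ``Same proof as Lemmas \ref{commut-1} and \ref{commut-2}'', and you carry out that transfer, correctly identifying that the only new point is that contracting $(J_n\ \nu\ a')$ introduces $J_{n+1}$ rather than $J_n$, which is harmless because the grammar for $E_u$ admits every index.
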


\begin{proof}
Same proof as Lemmas \ref{commut-1} and \ref{commut-2}.
\end{proof}

\begin{lem}\label{hnfJ22}
If $u \tr^* \overrightarrow{\l x}. (x \ \overrightarrow{c})$ and
$u \leadsto^* u'$, then $u' \tr^* \overrightarrow{\l x}
\overrightarrow{\l y}.(x \ \overrightarrow{c'} \
\overrightarrow{e_y})$ for some $\overrightarrow{c}\leadsto^*
\overrightarrow{c'}$ and $\overrightarrow{e_y}$.
\end{lem}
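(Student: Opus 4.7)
The plan is to carry over the proof of Lemma \ref{commut-11}(2) to the present setting; the two key ingredients are already in place, namely the commutation Lemma \ref{commut2} and the fact that the reduction rule of $J_n$ (Lemma \ref{nu2}) has the same shape as the rule for the old constant $J$, only with a varying number $h_n$ of fresh abstractions and with $J_{n+1}$ replacing $J_n$ in the arguments.

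First I would apply Lemma \ref{commut2}(1) to the two hypotheses: since $u \leadsto^* u'$ and $u \tr^* \overrightarrow{\l x}.(x\ \overrightarrow{c})$, we obtain a term $v'$ with $u' \tr^* v'$ and $\overrightarrow{\l x}.(x\ \overrightarrow{c}) \leadsto^* v'$. Thus it suffices to show that any $v'$ with $\overrightarrow{\l x}.(x\ \overrightarrow{c}) \leadsto^* v'$ satisfies $v' \tr^* \overrightarrow{\l x}\,\overrightarrow{\l y}.(x\ \overrightarrow{c'}\ \overrightarrow{e_y})$ for some $\overrightarrow{c}\leadsto^* \overrightarrow{c'}$ and some $\overrightarrow{e_y}$; this is exactly the analogue of Lemma \ref{commut-11}(1) for the new $\leadsto$.

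To establish that analogue, I would peel the term $\overrightarrow{\l x}.(x\ \overrightarrow{c})$ from the outside in, following the grammar defining $E$. The three production rules yield three possibilities at each step for the shape of the reduct $v'$: either a prefix $\l \overrightarrow{y}.(e \ \overrightarrow{e_y})$ has been introduced, or a wrapper $(J_n\ \nu\ e)$ has been introduced, or nothing has been added and one descends into the term. In the first case I push the $\overrightarrow{\l y}$ outside and invoke the induction hypothesis on $e \in E_{\overrightarrow{\l x}.(x\ \overrightarrow{c})}$. In the second case I use Lemma \ref{nu2} to reduce $(J_n\ \nu\ e)$ to $\l y_1\ldots \l y_{h_n}.(e\ (J_{n+1}\ \nu\ y_1)\ldots(J_{n+1}\ \nu\ y_{h_n}))$; each $(J_{n+1}\ \nu\ y_i)$ is in $E_{y_i}$ by construction of the grammar, so the extra arguments are of the required form $\overrightarrow{e_y}$, and induction on $e$ finishes the step. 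Propagation of $\leadsto$-reductions through the $\overrightarrow{c}$ and through substitutions is handled exactly as in Lemma \ref{utile}, whose statements and proofs transfer verbatim to the present grammar (the only change being that the constant $J$ is replaced by one of the $J_n$).

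The one point requiring a little care is that the parameter $n$ in $J_n$ changes along the reduction, so in the inductive construction the arguments produced at the head carry $J_{n+1}$ rather than $J_n$; but since the statement only asserts the existence of \emph{some} $\overrightarrow{e_y}$, and the grammar for $E$ admits $(J_m\ \nu\ \cdot)$ for \emph{every} $m$, this shift is absorbed without incident. This is really the only feature that distinguishes the present proof from that of Lemma \ref{commut-11}(2), and once it is noted the argument goes through mutatis mutandis.
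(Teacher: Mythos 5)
Your proof is correct and follows essentially the same route as the paper, which simply invokes Lemma \ref{commut2} and leaves implicit the transposition of Lemma \ref{commut-11}(1) to the new grammar; you spell out exactly that missing step, including the only genuinely new point, namely that the index shift from $J_n$ to $J_{n+1}$ in the unfolding is harmless because the grammar for $E$ admits every $J_m$.
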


\begin{proof}
This follows from Lemma \ref{commut2}
\end{proof}

\begin{defi}
Let $t$ be a solvable term. We say that:
\begin{enumerate}
  \item  $\n$ occurs nicely in
$t$ if the only occurrences of $\n$ are in a sub-term of the form
  $(J_n \ \n)$.
  \item $\n$ occurs correctly in
$t$ if it occurs nicely in $t$ and the head normal form of $t$
looks like $\overrightarrow{\l x}. (x \ \overrightarrow{c} \
\overrightarrow{e_y})$  for some final subsequence
$\overrightarrow{y}$ of  $\overrightarrow{x} $ of length at least
1 such that $\n$ does occur in $\overrightarrow{e_y}$.
\end{enumerate}

\end{defi}

\noindent The intuitive meaning of Lemma \ref{cles} below is ``for each $n
\in \N$, $t_{j_n} \leadsto^* (a_{j_n} \ B_{j_n})$ and $(A' \
V_{j_n}[A]) \tr^* \l X_{j_n}. (J_{n+1} \ \n \ a_{j_n} \
B_{j_n})$".

Strictly speaking, this is not true, because  the $a_{j_n}$ and
$B_{j_n}$ are not the ``real'' ones i.e. the  ones that occur in
the reduction with the real $\widehat{J}$.  For two reasons:

\begin{itemize}
  \item The first one is easily corrected:  in $a_{j_n}$ and  $B_{j_n}$ the
  constant $\g$ must be replaced by $J_{n+1}$
  \item The second one is more subtle. In the correct lemma, $\leadsto $ and the $J_n$  should be
  the ``real'' ones. But the
$a_{j_n}$ are defined using $\tilde{J}_p$ which are only fake
$J_p$. Stating the correct lemma  will need complicated, and
useless, definitions. We will not do it and thus   we state the
lemma  in the way it should be, intuitively, understood.

\end{itemize}

\begin{lem}\label{cles}
For each $n \in \N$, $t_{j_n} \leadsto^* (a_{j_n}[\gamma :=
J_{n+1}] \ B_{j_n}[\gamma := J_{n+1}])$ and $(A' \ V_{j_n}[A])
\tr^* \l X_{j_n}. (J_{n+1} \ \n \ a_{j_n}[\gamma := J_{n+1}] \
B_{j_n}[\gamma := J_{n+1}])$.
\end{lem}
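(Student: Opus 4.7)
The proof goes by induction on $n$, following the structure of the step-by-step construction of $j_n$, $h_n$, $\tilde{J}_n$, $a_{j_n}$ and $B_{j_n}$. The strategy is to show, at each level $n$, that the real reduction of $(A' \ V_{j_n}[A])$ faithfully mirrors the fake reduction of $((\tilde{J}_n \ \n \ A) \ V_{j_n}[A])$ carried out in Step $n$ of the construction, once the placeholder constant $\gamma$ is replaced by the appropriate $J_{n+1}$. The first part of the lemma is then obtained from the second by relating $t_{j_n}$ (the head normal form of $(A \ V_{j_n}[A])$) to the reduct of $(A' \ V_{j_n}[A])$ via the commutation Lemma \ref{commut2}.

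For the base case $n=0$, an application of Lemma \ref{nu2} with $n=0$ together with $h_0 \geq k_0 = lg(V_{j_0})$ gives
\[ (A' \ V_{j_0}[A]) \tr^* \l y_{k_0+1} \ldots y_{h_0}. (A \ (J_1 \ \n \ d_1[A]) \ldots (J_1 \ \n \ d_{k_0}[A]) \ (J_1 \ \n \ y_{k_0+1}) \ldots (J_1 \ \n \ y_{h_0})), \]
which coincides with the fake reduction in Step $0$ after substituting $\gamma := J_1$. Since $J_1$ cannot unfold further until it has received enough arguments, continuing the head reduction matches the fake one step by step, producing $\l X_{j_0}. (J_1 \ \n \ a_{j_0}[\gamma := J_1] \ B_{j_0}[\gamma := J_1])$. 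The first part follows from the observation $(A \ V_{j_0}[A]) \leadsto (A' \ V_{j_0}[A])$ (replace the head $A$ by $(J_0 \ \n \ A) \in E_A$) combined with $(A \ V_{j_0}[A]) \tr^* t_{j_0}$ and Lemma \ref{commut2}(1).

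For the inductive step, I would feed the induction hypothesis
\[ (A' \ V_{j_n}[A]) \tr^* \l X_{j_n}. (J_{n+1} \ \n \ a_{j_n}[\gamma := J_{n+1}] \ B_{j_n}[\gamma := J_{n+1}]) \]
into $(A' \ V_{j_{n+1}}[A])$, which differs by appending arguments drawn from $S_{j_n}[A], \ldots, S_{j_{n+1}-1}[A]$ (subject to the substitutions of Lemma \ref{prem}). The length condition $lg(X_{j_n}) < lg(S_{j_n} :: \cdots :: S_{j_{n+1}-1})$ built into the construction guarantees that all the $\l X_{j_n}$ are consumed, exposing $J_{n+1} \ \n$ applied to enough arguments to trigger Lemma \ref{nu2}, which introduces $h_{n+1}$ new lambdas and wraps each argument with $J_{n+2}$. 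This is exactly the real counterpart of the substitution $\tilde{J}_{n+1} = \tilde{J}_n[\gamma := \ldots]$ performed in Step $n+1$. The head reduction then proceeds until the element witnessing ``$S_k$ comes in head position at step $j_{n+1}$'' is exposed, yielding the target $\l X_{j_{n+1}}. (J_{n+2} \ \n \ a_{j_{n+1}}[\gamma := J_{n+2}] \ B_{j_{n+1}}[\gamma := J_{n+2}])$. The first part is again recovered by combining $(A \ V_{j_{n+1}}[A]) \leadsto^* (A' \ V_{j_{n+1}}[A])$ with Lemma \ref{commut2}, and invoking Lemma \ref{hnfJ22} to align head normal forms.

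The main obstacle is precisely the one flagged by the authors themselves just before the statement: making the correspondence between fake $\tilde{J}_n$-reductions (where $\gamma$ sits inertly at each layer) and real $\widehat{J}$-reductions (where the $J_m$ at each layer may fire further) mathematically precise. A fully rigorous version would have to track, layer by layer, which occurrences of $\gamma$ in $a_{j_n}, B_{j_n}$ are to be instantiated by which $J_m$, and would also have to handle the internal redexes arising from the fact that $\widehat{J}$ is built via the Turing fixed point $Y$. The plan above follows the authors' suggestion to read the lemma intuitively, letting Lemmas \ref{commut2} and \ref{hnfJ22} absorb the bookkeeping into the $\leadsto$-relation.
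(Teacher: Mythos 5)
Your proposal is correct and follows essentially the same route as the paper, whose entire proof of this lemma is the single line ``By induction on $n$'': your induction on $n$, tracking the construction of $j_n$, $h_n$, $\tilde{J}_n$ step by step and using Lemmas \ref{nu2}, \ref{commut2} and \ref{hnfJ22}, is exactly the argument the authors intend, just spelled out in more detail. You also correctly identify the same caveat the authors themselves raise before the statement, namely that a fully rigorous treatment of the fake versus real $J_n$ would require further bookkeeping that the paper deliberately omits.
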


\begin{proof}
By induction on $n$.
\end{proof}

\begin{prop}
For each $n \in \N$, $\n$ occurs correctly in $(A' \ V_{j_n}[A])$.
\end{prop}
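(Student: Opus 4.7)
The plan is to combine Lemma~\ref{cles} and Lemma~\ref{nu2} to compute the shape of the head normal form of $(A'\ V_{j_n}[A])$, and then check the two clauses defining ``$\n$ occurs correctly''.

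First, by Lemma~\ref{cles}, $(A'\ V_{j_n}[A]) \tr^* \l X_{j_n}.(J_{n+1}\ \n\ a\ B)$, where $a = a_{j_n}[\g := J_{n+1}]$, $B = B_{j_n}[\g := J_{n+1}]$ has length $s$, and $t_{j_n} \leadsto^* (a\ B)$. Expanding the leading $J_{n+1}$-redex using Lemma~\ref{nu2} gives $\l y_1 \cdots \l y_{h_{n+1}}.(a\ (J_{n+2}\ \n\ y_1) \cdots (J_{n+2}\ \n\ y_{h_{n+1}}))$, and $\b$-reducing against $B$ yields $\l X_{j_n}\,\l y_{s+1}\cdots \l y_{h_{n+1}}.(a\ r_1 \cdots r_{h_{n+1}})$, where $r_i = (J_{n+2}\ \n\ b_i)$ for $i \leq s$ and $r_i = (J_{n+2}\ \n\ y_i)$ for $s < i \leq h_{n+1}$. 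The crucial inequality $s \leq h_{n+1} - l_{j_n} - 1$, which follows from the definition $h_{n+1} = l_{j_n} + lg(B'_{j_n} :: T_n) + 1$ together with $|B_{j_n}| = |B'_{j_n}| \leq lg(B'_{j_n} :: T_n)$, guarantees that at least $l_{j_n}+1$ fresh binders $y_{s+1}, \ldots, y_{h_{n+1}}$ remain as outer abstractions.

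Next, I compute the head normal form of $(a\ r_1 \cdots r_{h_{n+1}})$. Since $t_{j_n} = \l \overrightarrow{z_{j_n}}.(y_{j_n}\ \overrightarrow{w_{j_n}})$ is already in head normal form with $l_{j_n}$ leading $\l$'s and $t_{j_n} \leadsto^* (a\ B)$, Lemma~\ref{hnfJ22} propagates this shape to $(a\ B)$ and, by the same $\leadsto$/$\tr^*$ propagation argument, to $(a\ r_1 \cdots r_{h_{n+1}})$: once $a$ consumes its $l_{j_n}$ initial $\l$-binders against $r_1, \ldots, r_{l_{j_n}}$, the head variable $y_{j_n}$ emerges with trailing arguments including $r_{l_{j_n}+1}, \ldots, r_{h_{n+1}}$. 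In particular, the last argument $r_{h_{n+1}} = (J_{n+2}\ \n\ y_{h_{n+1}})$ survives in a final position, so the head normal form of $(A'\ V_{j_n}[A])$ has the shape $\l X_{j_n}\,\l y_{s+1}\cdots\l y_{h_{n+1}}\,\overrightarrow{\l\xi}.(h\ \overrightarrow{c}\ \overrightarrow{e})$, with $y_{h_{n+1}}$ lying in a final subsequence of outer binders of length $\geq 1$ whose associated argument contains $\n$.

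Nice occurrence is immediate: $A' = (J_0\ \n\ A)$ contains $\n$ only inside a $(J_0\ \n\ \cdot)$-subterm and $V_{j_n}[A]$ is $\n$-free, so $\n$ occurs nicely in $(A'\ V_{j_n}[A])$; since the reduction rule $(J_m\ \n\ u) \tr^* \l\overrightarrow{y}.(u\ (J_{m+1}\ \n\ y_1)\cdots)$ only creates new $\n$-occurrences inside $(J_{m+1}\ \n\ \cdot)$-subterms, Lemma~\ref{reste} preserves this property under reduction. Combined with the shape analysis above, this yields correct occurrence.

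The main obstacle is the case where $y_{j_n} \in \overrightarrow{z_{j_n}}$: the $\b$-reduction of $a$ then substitutes $y_{j_n}$ by a $(J_{n+2}\ \n\ \cdot)$-term, which itself unfolds via Lemma~\ref{nu2} and may trigger further chains of $J_m$-expansions, each threatening to erase trailing arguments. What saves us is exactly the ``$+1$'' in the definition of $h_{n+1}$: it leaves at least one extra fresh $y$-binder beyond what is needed to absorb $B$ and the $l_{j_n}$ leading $\l$'s of $a$, so that a trailing $(J_m\ \n\ y)$-argument with $y$ bound in a final position of the outer $\l$-prefix necessarily survives in the final head normal form.
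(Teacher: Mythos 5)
Your argument is correct and follows essentially the same route as the paper's proof: apply Lemma~\ref{cles} to get $(A'\ V_{j_n}[A]) \tr^* \l X_{j_n}.(J_{n+1}\ \n\ a_{j_n}[\g:=J_{n+1}]\ B_{j_n}[\g:=J_{n+1}])$, unfold $J_{n+1}$ via Lemma~\ref{nu2}, use the arithmetic $h_{n+1}=l_{j_n}+lg(B'_{j_n}::T_n)+1$ to guarantee more than $l_{j_n}$ leftover binders carrying $(J_{n+2}\ \n\ y_i)$ arguments, and conclude with Lemma~\ref{hnfJ22}. Your explicit identification of the role of the ``$+1$'' and the separate check that $\n$ occurs nicely are welcome elaborations of points the paper leaves implicit.
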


\begin{proof}
We have $(A' \ V_{j_n}[A]) \tr^* \l X_{j_n}. (J_{n+1} \ \n \
a_{j_n}[\gamma := J_{n+1}] \ B_{j_n}[\gamma := J_{n+1}]) =$ \\$\l
X_{j_n}. ((\l z_1 ... \l z_{h_{n+1}}. (\ a_{j_n}[\gamma :=
J_{n+1}] \ (J_{n+2} \ \nu \ z_1) \ ... \  (J_{n+2} \ \nu \
z_{h_{n+1}})) \ B_{j_n}[\gamma := J_{n+1}])$. Since $h_{n+1} =
l_{j_n} + lg(B'_{j_n}T_n) + 1$, then, by Lemma \ref{cles}, $(A' \
V_{j_n}[A]) \tr^* \l X_{j_n} \l Z. (t'_{j_n} \ e_Z)$ where
$t'_{j_n} \leadsto^* t_{j_n}$, $e_Z \leadsto^* Z$ and $lg(Z) >
l_{j_n}$. Therefore, by Lemma \ref{hnfJ22}, $\n$ occurs correctly
in $(A' \ V_{j_n}[A])$.
\end{proof}

\begin{lem}\label{reste2}
Let $t$ be a solvable term. Assume that $\n$ occurs correctly in
$t$. Then $\n$ occurs (and it occurs correctly) in every reduct of
$t$.
\end{lem}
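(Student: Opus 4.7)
The plan is to mirror the proof of Lemma \ref{reste}, exploiting the analogous preservation properties built into the family $(J_n)_{n \in \N}$. The property ``occurs correctly'' has two clauses (niceness of the $\n$-occurrences and a particular head normal form shape with $\n$ among the final arguments), and both must be shown stable under $\tr^{*}$.

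Niceness is immediate from the $J$-rule (Lemma \ref{nu2}): the only reductions acting on a $(J_m \ \n)$ subterm unfold it to $\l y_1 \ldots \l y_{h_m}.(u \ (J_{m+1} \ \n \ y_1) \ldots (J_{m+1} \ \n \ y_{h_m}))$, after which every fresh $\n$ is again guarded by $J_{m+1}$. A pure $\b$-step elsewhere can only copy, move, or discard whole $(J_m \ \n)$ blocks, so bare $\n$'s are never exposed. Hence niceness persists along $\tr^{*}$.

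For the head normal form clause, let $H = \overrightarrow{\l x}.(x \ \overrightarrow{c} \ \overrightarrow{e_y})$ witness the hypothesis and let $t \tr^{*} t'$. By Theorem \ref{CR}, $H$ and $t'$ share a common reduct $L$, and since reduction of a head normal form cannot touch its outer skeleton, $L = \overrightarrow{\l x}.(x \ \overrightarrow{c'} \ \overrightarrow{e_y'})$ with $\overrightarrow{c} \tr^{*} \overrightarrow{c'}$ and $\overrightarrow{e_y} \tr^{*} \overrightarrow{e_y'}$ componentwise. In particular $L$ is a head normal form of $t'$, and it remains to check that each $e_{y_i}'$ still lies in $E_{y_i}$ and that $\n$ still occurs in $\overrightarrow{e_y'}$.

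Both facts will follow from closure of the grammar $E_{y_i}$ under $\tr$, which I will prove by induction on the derivation of $e_{y_i} \in E_{y_i}$, paralleling the case analysis carried out for $\leadsto$ in Lemma \ref{commut2} and appealing to a straightforward generalisation of Lemma \ref{utile}(1) for the substitution step. The main delicate point -- and what I consider the main obstacle -- is showing $\n$ cannot be erased in the course of these internal reductions. Here one uses that every bound variable $z_i$ introduced by an abstraction in the $E$-grammar is the head of its own companion $e_{z_i}$, so when the corresponding $\b$-redex fires, the substituting term $v_i$ (which carries its $(J_m \ \n)$-tags) inevitably appears as the head of the reduct of $e_{z_i}$ and therefore survives. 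Iterating, every $(J_m \ \n)$ token in $\overrightarrow{e_y}$ is either preserved or multiplied by the $J$-rule, so the population of $\n$ in $\overrightarrow{e_y'}$ never collapses to zero, and $\n$ occurs correctly in $t'$.
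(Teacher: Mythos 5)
Your proof is correct and takes essentially the same route as the paper, which disposes of this lemma in one line by citing the fact that if $(J_n \ \n \ y) \leadsto^* u$ then $\n \in \b\O(u)$; your argument is simply a detailed unfolding of that fact (stability of the head-normal-form skeleton under reduction, closure of the grammar $E$ under $\tr$, and non-erasability of the $(J_m \ \n)$ tags because each bound variable introduced by the grammar heads its own companion $e_{z_i}$, so nothing substituted there can be discarded). No gap to report.
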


\begin{proof}
This follows immediately from the fact that if $(J_n \ \n \ y)
\leadsto^* u$, then $\n \in \beta\Omega(u)$.
\end{proof}

\begin{lem}
Let $k$ be an integer such that $\n$ occurs correctly in $(A' \
V_k[A])$. Then $\n$ occurs correctly in $(A' \ U_k[A'])$.
\end{lem}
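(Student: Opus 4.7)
The plan is to bridge $(A' \ V_k[A])$ and $(A' \ U_k[A'])$ in two steps — one pure $\beta$-reduction and one $\leadsto^*$-expansion — and then to verify that ``$\n$ occurs correctly'' is preserved by each of these two kinds of steps.

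First I would use Lemma \ref{prem}, which gives $V_k \tr^* U_k$; substituting $A$ for $x$ yields $V_k[A] \tr^* U_k[A]$, and so $(A' \ V_k[A]) \tr^* (A' \ U_k[A])$. Since by hypothesis $\n$ occurs correctly in $(A' \ V_k[A])$, Lemma \ref{reste2} directly gives that $\n$ occurs correctly in $(A' \ U_k[A])$.

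Next I would observe that, because $A' = (J_0 \ \n \ A)$, the grammar clause $E_u = (J_n \ \n \ e_u)$ makes $A \leadsto A'$. Replacing each free occurrence of $x$ in $U_k$ by $A'$ rather than by $A$ amounts to performing one $\leadsto$-step inside each occurrence of $A$ in $U_k[A]$; this yields $U_k[A] \leadsto^* U_k[A']$ and hence $(A' \ U_k[A]) \leadsto^* (A' \ U_k[A'])$. So it only remains to show that if $\n$ occurs correctly in $t$ and $t \leadsto^* t'$, then $\n$ occurs correctly in $t'$.

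The ``nicely'' part is immediate from inspection of the grammar defining $\leadsto$: the only way $\n$ can appear in $t'$ is inside a subterm of the form $(J_n \ \n \ \cdot)$ already present in $t$ (possibly wrapped further), so the $\leadsto$-image of a term where $\n$ occurs only in $(J_n \ \n)$-subterms has the same property. For the head normal form requirement, the head normal form of $t$ has the shape $\overrightarrow{\l x}. (x \ \overrightarrow{c} \ \overrightarrow{e_y})$ with $\n$ occurring in the $\overrightarrow{e_y}$ part (indexed by a final subsequence $\overrightarrow{y}$ of $\overrightarrow{x}$). Applying Lemma \ref{hnfJ22} to $t \leadsto^* t'$ yields that $t'$ reduces to $\overrightarrow{\l x} \, \overrightarrow{\l y'}. (x \ \overrightarrow{c'} \ \overrightarrow{e_{y'}})$ with $\overrightarrow{c} \leadsto^* \overrightarrow{c'}$. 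The occurrences of $\n$ inside the original $\overrightarrow{e_y}$ survive inside their $\leadsto$-images (the first point of the proof shows $\n$ stays), and the extra abstractions $\overrightarrow{\l y'}$ only extend the final block of variables, so the $\overrightarrow{e_y}$-part of the head normal form remains indexed by a final subsequence of the outer $\l$'s, of length at least $1$. Hence $\n$ still occurs correctly in $t'$.

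The main subtlety is this last step: one must check that the \emph{final} character of the subsequence $\overrightarrow{y}$ in the definition of ``occurs correctly'' is not destroyed when $\leadsto$ inserts new abstractions, and that an $\O$-reduction cannot silently erase all residues of $\n$. Both points are handled by Lemma \ref{hnfJ22} (which places the new $\overrightarrow{\l y'}$ precisely at the end) together with Lemma \ref{utile}(3), exactly as in the proof of Proposition \ref{p2}.
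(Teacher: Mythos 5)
Your proof is correct and follows essentially the same route as the paper, whose entire proof is ``this follows immediately from Lemmas \ref{hnfJ22} and \ref{reste2}'': you simply make explicit the intended two-step decomposition $(A' \ V_k[A]) \tr^* (A' \ U_k[A]) \leadsto^* (A' \ U_k[A'])$, handling the first step with Lemma \ref{reste2} and the second with Lemma \ref{hnfJ22}. The only superfluous element is the closing remark about $\O$-reduction and Lemma \ref{utile}(3), which is not needed for this statement (it matters only later, in the analogue of Proposition \ref{p2}).
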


\begin{proof}
This follows immediately from Lemmas \ref{hnfJ22} and
\ref{reste2}.
\end{proof}

\begin{prop}
$\n$ is persisting in $F[A']$.
\end{prop}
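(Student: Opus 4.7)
The plan is to mirror the corollary at the end of Section~\ref{cas1}, replacing the Section~3.2 tools by their Section~4 counterparts. Two clauses have to be checked: that $\n$ is never applied in any reduct of $F[A']$, and that $\n \in \b\O(F[A'])$.

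For the first clause, note that in $A'=(J_0\ \n\ A)$ the variable $\n$ sits only as the second argument of $J_0$, and by Lemma~\ref{nu2} each head step $(J_n\ \n\ u) \tr^* \l y_1\ldots \l y_{h_n}.(u\ (J_{n+1}\ \n\ y_1)\ldots(J_{n+1}\ \n\ y_{h_n}))$ reproduces subterms of the same shape $(J_{n+1}\ \n\ y_i)$. Hence $\n$ occurs nicely in $F[A']$ and, by the ``nicely'' half of Lemma~\ref{reste2}, continues to occur nicely in every reduct; this is precisely the statement that $\n$ is never applied.

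For the second clause I would transcribe the proof of Proposition~\ref{p2}. Pick an arbitrary reduct $t_1$ of $F[A']$; by Lemma~\ref{commutation} split $F[A']\tr^* t_2 \tr_{\O}^* t_1$. Lemma~\ref{reduction} gives $F\tr^* G = D[[]_i = w_i]$ with $t_2 = D[[]_i = w'_i]$ and $w_i[A']\tr^* w'_i$. Using Lemma~\ref{Fk} choose $n$ with $G\tr^* F_{j_n}$, and let $x_{[j]}$ be the occurrence in $G$ whose residue is the good $x_{(j_n)}$; Lemma~\ref{residu} then supplies a substitution $\s$ and a tail $V$ with $U_{j_n} = W :: V$ and $\s(Arg(x_{[j]},G))\tr^* W$. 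By Lemma~\ref{commut2} there is $w''_j$ with $(A'\ Arg(x_{[j]},G)[A])\tr^* w''_j$ and $w''_j \leadsto^* w'_j$. Taking a common reduct $s$ of $(\s(w''_j)\ V[A])$ and $(A'\ U_{j_n}[A])$, the preceding lemma combined with Lemma~\ref{reste2} forces $\n$ to occur correctly in $s$; since $\n$ occurs neither in $\s$ nor in $V[A]$, it must already be present in $w''_j$, and therefore in $w'_j$ and in $t_2$.

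To transport this occurrence across the $\O$-reduction $t_2 \tr_{\O}^* t_1$ I would reuse the end-of-Section~3.1 argument: if every $\n$ in $w'_j$ could be erased by an $\O$-step, then by Lemma~\ref{utile}(3) the matching $\n$ in $w''_j$ could too, contradicting Church--Rosser applied to the reducts of $s$, each of which carries a correctly-placed $\n$; purity of $x_{[j]}$ in $G$ (Lemma~\ref{ajout}) further prevents $w'_j$ from being swallowed by an unsolvable enclosing context. The main obstacle I anticipate is the bookkeeping around the parameter shift $J_n \mapsto J_{n+1}$ hidden in $\leadsto$: one has to be sure that the common reduct $s$ genuinely inherits ``correctness'', and that the fake $\tilde{J}_n$ and the placeholder $\g$ used in building the recursive $h$ do not contaminate the real reduction of $(A'\ V_{j_n}[A])$. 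This is exactly the content of Lemma~\ref{cles} (read in its intuitive form, as the authors note), so modulo that lemma the argument reduces to a routine reassembly of the Section~3.2 template.
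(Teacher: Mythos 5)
Your proposal is correct and follows essentially the same route as the paper, whose proof of this proposition is literally ``same proof as Proposition~\ref{p2}'': you handle the never-applied clause via the nicely-occurring invariant (the analogue of Lemma~\ref{pas_applique}) and transcribe the Proposition~\ref{p2} argument with the Section~4 counterparts (Lemma~\ref{commut2}, Lemma~\ref{reste2}, Lemma~\ref{cles}), which is exactly what the authors intend. The only cosmetic slip is attributing the preservation of ``nicely'' to Lemma~\ref{reste2}, which as stated only covers ``correctly''; the nicely case follows from the same observation about the shape $(J_n\ \n\ y)$, as in Lemma~\ref{reste}.
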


\begin{proof}
Same proof as Proposition \ref{p2}.
\end{proof}

\section{The other assumption}\label{5}

As we already said the fact that $\n$ is persisting in $F[A']$
does not imply that, letting  $A_n=A'[\n=c_n]$, $F[A_n] \not
\simeq
 F[A_m]$ for $n \neq m$. To ensure that the range
of $\l x.F$ is infinite,  we need another assumption on $F$.

Let $\l x.F$ be a closed term and  $A$ be such that $F[A] \not
\simeq F[\O]$. {\em In propositions \ref{6.1} and \ref{6.2}
below we assume that $F$ has the Barendregt's persistence property. Let $A'$ be
the corresponding term. We also assume that $A'$ has been obtained
by the way developed in section \ref{4.2} or in section
\ref{cas_2}}. Note that, in these cases, $\n$ is never applied in
a reduct of $F[A']$.

\begin{prop}\label{6.1}
Assume there is a sequence $(t_n)_{n \in \N}$ of distinct closed and
normal terms such that, for every $n$, $t_n$ never occurs as a
sub-term of some $t'$ such that $F[A'] \trbo^* t'$. Then, the
range of $\l x.F$ is infinite.
\end{prop}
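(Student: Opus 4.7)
The plan is to exhibit an explicit infinite family of pairwise $\simeq$-inequivalent images of $F$. Concretely, for each $n\in\N$ put $B_n = A'[\n:=t_n]$ (this is closed since $t_n$ is closed) and show that $F[B_n] \not\simeq F[B_m]$ whenever $n\neq m$; this gives directly that $\Im(\l x.F)$ is infinite.

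The first step I would prove is a commutation lemma: because $\n$ is never applied in any reduct of $F[A']$ and each $t_n$ is closed and normal, $\trbo$-reductions out of $F[B_n] = F[A'][\n:=t_n]$ lift to $\trbo$-reductions out of $F[A']$. That is, if $F[B_n]\trbo^* w$, there is some $u_n$ with $F[A']\trbo^* u_n$ and $w=u_n[\n:=t_n]$. The $\b$-part is straightforward, since substituting for a never-applied variable cannot create a new $\b$-redex. The $\O$-part is the delicate one: along the lifted reduction, any unsolvable subterm $v''[\n:=t_n]$ must come from an unsolvable $v''$, because a solvable $v''$ would have head normal form $\l\overrightarrow{x}.(y\ \overrightarrow{w})$ with either $y\neq\n$ (already giving a head normal form of $v''[\n:=t_n]$) or $y=\n$ and $\overrightarrow{w}$ empty by the never-applied hypothesis (forcing $v''[\n:=t_n]\tr^*\l\overrightarrow{x}.t_n$, still solvable).

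With the lemma in hand, suppose for contradiction $F[B_n]\simeq F[B_m]$ for some $n\neq m$, pick a common $\trbo^*$-reduct $w$, and write $w = u_n[\n:=t_n] = u_m[\n:=t_m]$ for some reducts $u_n,u_m$ of $F[A']$. Persistence gives a $\n$-occurrence in $u_n$; the image of that position in $w$ carries the subterm $t_n$. Inspecting the same position in the decomposition $w = u_m[\n:=t_m]$, three cases arise: (i) the position lies in $u_m$ away from any $\n$, so $t_n$ occurs as a subterm of the reduct $u_m$ of $F[A']$, contradicting the hypothesis on the $t_k$'s; (ii) the position is a $\n$-occurrence of $u_m$, forcing $t_n=t_m$ and contradicting the distinctness of the $t_k$'s; (iii) the position is strictly inside a substituted copy of $t_m$, so $t_n$ is a proper subterm of $t_m$. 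Running the symmetric argument starting from a $\n$-occurrence of $u_m$ in the decomposition $w = u_n[\n:=t_n]$ kills (i) and (ii) by the same hypotheses, leaving that $t_m$ is a proper subterm of $t_n$; combined with (iii) this is impossible.

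The main obstacle will be the commutation lemma, specifically its $\O$-part: the never-applied hypothesis must be invoked precisely at the head-variable position of the putative head normal form, and one needs to maintain the invariant along the induction that the lifted term is genuinely a reduct of $F[A']$, so that ``never applied'' is actually available where it is used. Once this technical step is settled, the three-case subterm analysis becomes an immediate consequence of persistence combined with the non-subterm assumption on the $t_k$'s.
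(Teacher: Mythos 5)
Your overall strategy is exactly the one the paper uses: lift $\trbo$-reductions of $F[A'[\n:=t_n]]$ back to reductions of $F[A']$ using the fact that $\n$ is never applied, write a common reduct as $w=u_n[\n:=t_n]=u_m[\n:=t_m]$, and derive a contradiction by locating a substituted copy of $t_n$ inside the decomposition through $u_m$. Your treatment of the lifting lemma, in particular the $\O$-part where solvability of $v''[\n:=t_n]$ must be pulled back to solvability of $v''$, is more explicit than the paper's single clause ``since $\n$ is never applied \dots there are reducts $a_n$ and $a_m$ \dots'', and that part is fine.

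The gap is in the final case analysis. Your case (i) conflates ``the position is a position of $u_m$'' with ``the subterm of $u_m$ at that position is $\n$-free'': there is a fourth case in which the position belongs to $u_m$ but the subterm $s$ of $u_m$ there contains $\n$ strictly below its root, so that $s[\n:=t_m]=t_n$ only forces $t_m$ to be a proper subterm of $t_n$ and does not exhibit $t_n$ as a subterm of $u_m$. Once this case is added, each direction of your symmetric argument yields only the disjunction ``$t_n$ is a proper subterm of $t_m$, or $t_m$ is a proper subterm of $t_n$'', and two copies of this disjunction do not contradict each other, since both can resolve to the same alternative. This is not a hypothetical worry: the statement permits the $t_k$ to be nested, and the paper's own remark following the proposition works with $t_{k+1}=\l x.t_k$. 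It is also precisely the point the paper's proof elides with the unargued sentence ``this implies that $t_n$ occurs in $a_m$''. The argument closes immediately if one additionally assumes (or arranges) that no $t_k$ is a subterm of another $t_{k'}$, since then only your cases (i) and (ii) survive; without such an assumption some further input (for instance confluence of $\trbo$ applied to the two reducts $u_n$ and $u_m$ of $F[A']$) appears to be needed, and neither your text nor the paper's supplies it.
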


\begin{proof}
Let $A_n=A'[\n:=t_n]$.  It is enough to show that $F[A_n] \not
\simeq F[A_m]$ for $n \neq m$. Assume $F[A_n] \simeq F[A_m]$ for
some $n\neq m$ and let $u$ be a common reduct. Since $\n$ is never
applied in a reduct of $F[A']$, there are reducts $a_n$ and $a_m$
of $F[A']$ such that $u=a_m[\n:=t_m]=a_n[\n:=t_n]$. This implies
that $t_n$ occurs in $a_m$. Contradiction.
\end{proof}

\noindent{\bf Remark}

Say that $u$ is a universal generator if, for every closed
$\l$-term $t$, there is a reduct of $u$ where $t$ occurs as a
sub-term. Before Plotkin gave his counterexample, Barendregt had
proved that the omega-rule is valid when $t,t'$ are not universal generators.
Our hypothesis on the existence of the sequence $(t_n)_{n \in \N}$
may look similar.  Assuming that $F[A']$ is not a universal
generator, there is a term $t$ that never occurs as a sub-term of
a reduct of $F[A']$. Letting   $t_0 = t$ and $t_{n+1} = \l x.
t_n$, the reducts of $F[A']$ never contain one of these terms.
Also, they are not equal, because otherwise $t$ is $\O$. This is
however not enough to show that the $F[\n:=t_n]$ are distinct
because we need that no reduct of $F[A']$ contains {\em a reduct}
of one of the $t_n$ (this is why, in our hypothesis,  we have
assumed that the $t_n$ are normal). This raises two questions :

\begin{enumerate}
  \item Say that a term $u$ is a weak generator if for every closed
$\l$-term $t$ one of its reducts occurs as a sub-term of a reduct
of $u$. A universal generator is, trivially, a weak generator. Is
the converse true ?
  \item Is it true that, if $F[A']$ is
a universal generator then so  is $F[A]$. Note that, somehow, the
$A'$ we have constructed is a kind of $\eta$-infinite expansion of
$A$.
\end{enumerate}

\noindent If these two propositions were true, we could replace the
assumption of Proposition \ref{6.1} by the (more elegant) fact
that $F[A]$ is not a universal generator.

\begin{defi}
Let $A_n=A'[\n=c_n]$.  Say that $F,A$ satisfy the Scope lemma if,
for every $n,m$, the fact that $F[A_n]\simeq F[A_m]$ implies that,
for some $k$, $(x \ U_k)[x:=A_n]\simeq (x \ U_k)[x:=A_m]$.
\end{defi}

The terminology ``Scope lemma'' is borrowed to A. Polonsky. In his
paper he stated an hypothesis (denoted as the scope Lemma) which
corresponds to the previous property.

\begin{prop}\label{6.2}
Assume $F,A$  satisfy the scope lemma. Then the range of $\l x.F$ is
infinite.
\end{prop}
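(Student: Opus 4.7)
The plan is a proof by contradiction, using the scope lemma to localise the distinguishing problem to the sub-terms $(A_n \ U_k[A_n])$, and then using the persistence of $\nu$ to separate distinct Church numerals.

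Suppose for contradiction that $\Im(\lambda x.F)$ is finite. Since the map $n\mapsto [F[A_n]]_{\mathcal H}$ takes values in a finite set, the pigeonhole principle yields distinct integers $n\neq m$ with $F[A_n]\simeq F[A_m]$. The scope lemma then produces some $k$ with $(A_n \ U_k[A_n])\simeq(A_m \ U_k[A_m])$, and the remainder of the argument rules this out.

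The terms in $U_k$ are extracted from $F_k$, hence contain no free occurrence of $\nu$. Since $A_n=A'[\nu:=c_n]$, one has the syntactic identity
\[(A_n \ U_k[A_n]) \ = \ (A' \ U_k[A'])[\nu:=c_n],\]
and likewise for $m$. Writing $w=(A' \ U_k[A'])$, the problem reduces to showing $w[\nu:=c_n]\not\simeq w[\nu:=c_m]$ when $n\neq m$. Because $F\tr^* F_k$, the term $w$ appears as a sub-term of $F_k[A']$, which is itself a reduct of $F[A']$. Therefore the results of Sections \ref{cas1} and \ref{cas_2} apply to $w$: $\nu$ is never applied in any reduct of $w$, and, by transporting Proposition \ref{p1} and its Section \ref{cas_2} counterpart from $(A' \ U_k[A])$ to $(A' \ U_k[A'])$ (a move that only introduces extra copies of $\nu$ coming from the replacement of $A$ by $A'=(J\ \nu\ A)$ inside the arguments), $\nu$ occurs correctly in $w$, so in particular $\nu\in\beta\Omega(w)$.

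Now since $\nu$ is never applied in reducts of $w$ and $c_n,c_m$ are closed normal forms, no $\beta\Omega$-reduction of $w[\nu:=c_n]$ or $w[\nu:=c_m]$ can involve the substituted numerals as heads of redexes. Every such reduction is the image under $\nu:=c_n$ (respectively $\nu:=c_m$) of a $\beta\Omega$-reduction of $w$. By Church--Rosser for $\tr_{\beta\Omega}^*$ (Lemma \ref{commutation}), a common $\beta\Omega$-reduct of $w[\nu:=c_n]$ and $w[\nu:=c_m]$ descends to a common reduct $u$ of $w$, in which $\nu$ still occurs by persistence. The resulting equality $u[\nu:=c_n]=u[\nu:=c_m]$ at a position of $\nu$ in $u$ then forces $c_n=c_m$, contradicting $n\neq m$.

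The hard part is the transport step: formally verifying that the ``occurs correctly'' property established for $(A' \ U_k[A])$ still holds for $(A' \ U_k[A'])$, which requires tracking how the extra $\nu$'s coming from the substitution $x:=A'$ inside the arguments of $U_k$ behave during reduction. Once this is done and the factoring of the common $\beta\Omega$-reduct through a reduct of $w$ (with $\nu$ still visible) is made precise using Lemma \ref{commutation}, the separation of $c_n$ from $c_m$ is immediate from the fact that distinct Church numerals are distinct normal forms.
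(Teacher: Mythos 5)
Your overall route is the intended one: the paper leaves the proof as ``immediate'' precisely because the scope lemma reduces everything to showing $(A'\ U_k[A'])[\n:=c_n]\not\simeq(A'\ U_k[A'])[\n:=c_m]$ for $n\neq m$, and your reduction to this statement (the substitution identity, the observation that $w=(A'\ U_k[A'])$ is a sub-term of the reduct $F_k[A']$ of $F[A']$, and the transport of ``occurs correctly'' from $U_k[A]$ to $U_k[A']$ via the $\leadsto$ machinery of Lemma \ref{commut-2}) is sound. The gap is in the last step. Church--Rosser gives a common $\b\O$-reduct $v$ of $w[\n:=c_n]$ and $w[\n:=c_m]$, and your lifting argument gives $v=u_n[\n:=c_n]=u_m[\n:=c_m]$ for reducts $u_n,u_m$ of $w$ --- but these are in general \emph{two different} reducts of $w$, and nothing lets you replace them by a single $u$: applying Church--Rosser again to $u_n$ and $u_m$ inside the reduction graph of $w$ only produces terms to which $v$ reduces further, not an equality. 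With $u_n\neq u_m$ your conclusion fails as stated, because an occurrence of $c_n$ in $v=u_m[\n:=c_m]$ could a priori come from an occurrence of $c_n$ already present in $u_m$ itself; nothing in the hypotheses of Proposition \ref{6.2} forbids Church numerals from occurring as sub-terms of reducts of $w$. This is exactly the Plotkin obstruction discussed in the introduction, and it is why Proposition \ref{6.1} needs its ``never occurs as a sub-term'' hypothesis, which you do not have here.

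The repair does not need Church--Rosser at the level of $w$ at all: use the ``occurs nicely'' invariant instead. In every reduct of $w$ the variable $\n$ occurs only as the first argument of the constant $J$ (resp.\ $J_i$), and conversely every occurrence of $J$ in a reduct of $w$ has exactly $\n$ as its first argument, since $J$ enters only through $A'=(J\ \n\ A)$ and its reduction rule reproduces the pattern $(J\ \n\ \cdot)$ with the first argument frozen. Hence in $v$, read as $u_n[\n:=c_n]$, every occurrence of $J$ carries $c_n$ as its first argument, and there is at least one such occurrence because $\n\in\b\O(w)$; read as $u_m[\n:=c_m]$, those same occurrences carry $c_m$. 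Therefore $c_n=c_m$ and $n=m$. With this substituted for your final step the argument is complete.
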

\begin{proof}
Immediate.
\end{proof}

\section*{Acknowledgment}
We wish to thank Andrew
Polonsky for helpful discussions and also the anonymous referees
for their remarks and suggestions.


\end{document}